\def\opn#1#2{\def#1{\operatorname{#2}}} 
\opn\chara{char} \opn\length{\ell} \opn\pd{pd} \opn\rk{rk}
\opn\projdim{proj\,dim} \opn\injdim{inj\,dim} \opn\rank{rank}
\opn\depth{depth} \opn\grade{grade} \opn\height{height}
\opn\embdim{emb\,dim} \opn\codim{codim}
\opn\Tr{Tr} \opn\bigrank{big\,rank}
\opn\superheight{superheight}\opn\lcm{lcm}
\opn\trdeg{tr\,deg}%
\opn\reg{reg} \opn\lreg{lreg} \opn\skel{skel}
\opn\multideg{multideg}
\opn\div{div} \opn\Div{Div} \opn\cl{cl} \opn\Cl{Cl}
\opn\Spec{Spec} \opn\Supp{Supp} \opn\supp{supp} \opn\Sing{Sing}
\opn\Ass{Ass}
\opn\Ann{Ann} \opn\Rad{Rad} \opn\Soc{Soc}
\opn\Ker{Ker} \opn\Coker{Coker} \opn\Im{Im} \opn\Hom{Hom}
\opn\Tor{Tor} \opn\Ext{Ext} \opn\End{End} \opn\Aut{Aut}
\opn\id{id}
\opn\nat{nat}
\opn\pff{pf}
\opn\Pf{Pf} \opn\GL{GL} \opn\SL{SL} \opn\mod{mod} \opn\ord{ord}
\opn\aff{aff} \opn\con{conv} \opn\relint{relint} \opn\st{st}
\opn\lk{lk} \opn\cn{cn} \opn\core{core} \opn\vol{vol}
\opn\link{link} \opn\star{star} \opn\skel{skel} \opn\Reg{Reg}
\opn\gr{gr}
\def\pot#1#2{#1[\kern-0.28ex[#2]\kern-0.28ex]}
\opn\dirlim{\underrightarrow{\lim}}
\opn\inivlim{\underleftarrow{\lim}}
\def\Implies{\ifmmode\Longrightarrow \else
     \unskip${}\Longrightarrow{}$\ignorespaces\fi}
\def\implies{\ifmmode\Rightarrow \else
     \unskip${}\Rightarrow{}$\ignorespaces\fi}
\def\iff{\ifmmode\Longleftrightarrow \else
     \unskip${}\Longleftrightarrow{}$\ignorespaces\fi}
\newtheorem{thm}{Theorem}[section]
\newtheorem{cor}[thm]{Corollary}
\newtheorem{lem}[thm]{Lemma}
\newtheorem{prop}[thm]{Proposition}
\newtheorem{exam}[thm]{Example}
\newtheorem{rem}[thm]{Remark}
\numberwithin{equation}{section}
\begin{document}

\bibliographystyle{amsplain}

\title{ Some Cohen-Macaulay and unmixed binomial edge ideals  }
\author{ Dariush Kiani and Sara Saeedi Madani }
\thanks{2010 \textit{Mathematics Subject Classification.} 13C05, 05E40. }
\thanks{\textit{Key words and phrases.} Binomial edge ideal, unmixed, Cohen-Macaulay, generalized block graph, join of graphs, corona of graphs.
}

\address{Dariush Kiani, Department of Pure Mathematics,
 Faculty of Mathematics and Computer Science,
 Amirkabir University of Technology (Tehran Polytechnic),
424, Hafez Ave., Tehran 15914, Iran, and School of Mathematics, Institute for Research in Fundamental Sciences (IPM),
P.O. Box 19395-5746, Tehran, Iran.} 
\email{dkiani@aut.ac.ir, dkiani7@gmail.com}
\address{Sara Saeedi Madani, Universit\"at Osnabr\"uck, Institut f\"ur Mathematik, 49069 Osnabr\"uck, Germany.} 
\email{sara.saeedimadani@uni-osnabrueck.de, sarasaeedim@gmail.com}

\begin{abstract}
We study unmixed and Cohen-Macaulay properties of the binomial edge ideal of some classes of graphs. We compute the depth of the binomial edge ideal of a generalized block graph. We also characterize all generalized block graphs whose binomial edge ideals are Cohen-Macaulay and unmixed. So that we generalize the results of Ene, Herzog and Hibi on block graphs. Moreover, we study unmixedness and Cohen-Macaulayness of the binomial edge ideal of some graph products such as the join and corona of two graphs with respect to the original graphs'.
\end{abstract}

\maketitle

\section{ Introduction }\label{Introduction}

\noindent The binomial edge ideal of a graph was introduced, at about the same time, in \cite{HHHKR} and \cite{O}. Let $G$ be a simple graph with the vertex set $[n]$, edge set $E(G)$, and let $S=K[x_1,\ldots,x_n,y_1,\ldots,y_n]$ be the polynomial ring, where $K$ is a field. The \textbf{binomial edge ideal} of $G$ in $S$, which is denoted by $J_G$, is generated by binomials $f_{ij}=x_iy_j-x_jy_i$, where $i<j$ and $\{i,j\}\in E(G)$. Moreover, this ideal is generated by a collection of 2-minors of a $(2\times n)$-matrix whose entries are all indeterminates. Some algebraic properties of this ideal
were studied in \cite{EHH}, \cite{HHHKR}, \cite{KS} and \cite{SK}. Also, in \cite{EHHQ}, the binomial edge ideal of a pair of graphs were introduced as a
generalization of the binomial edge ideal of a graph. Let $G_1$ be a graph on the vertex set $[m]$ and $G_2$ a graph on the vertex set $[n]$, and
let $X= (x_{ij})$ be an $(m\times n)$-matrix of indeterminates. Let
$S=K[X]$ be the polynomial ring in the variables $x_{ij}$, where $i=1,\ldots,m$ and $j=1,\ldots,n$. Let $e=\{i,j\}$ for some $1\leq i < j\leq m$
and $f=\{t, l\}$ for some $1\leq t < l\leq n$. To
the pair $(e,f)$, the $2$-minor $p_{e,f}=[i,j|t,l]=x_{it}x_{jl}-x_{il}x_{jt}$ of $X$ is assigned. The ideal $J_{G_1,G_2}=(p_{e,f}:~e\in E(G_1), f\in E(G_2))$ is called the \textbf{binomial edge ideal of the pair} $(G_1,G_2)$. Some properties of this ideal were also studied in \cite{SK1}. If $G_1$ is just an edge, then $J_{G_1,G_2}$ is isomorphic to $J_{G_2}$.

We study unmixedness and Cohen-Macaulayness of the binomial edge ideal of a class of chordal graphs, called generalized block graphs. Hence, we generalize the result of \cite{EHH} about block graphs. Also, we study the join and corona of two graphs. Actually, we investigate about the relationship between unmixed and Cohen-Macaulay  properties of the binomial ideal associated to the join and corona of two graphs, and the original graphs. This paper is organized as follows. In Section~\ref{Preliminaries}, we review some definitions, notation and known results on the topic, which will be used throughout the paper. In Section~\ref{Binomial edge ideal of a class of chordal graphs}, we study a class of chordal graphs, called generalized block graphs, introduced previously by the authors in \cite{KS}. We compute the depth of the binomial edge ideal of these graphs with respect to graphical terms. Further, we determine all such graphs whose binomial edge ideals are unmixed and Cohen-Macaulay. Therefore, we generalize the results appeared in \cite{EHH}, about block graphs. In Section~\ref{Join}, we focus on the join of graphs. We give some necessary and sufficient conditions for unmixed and Cohen-Macaulay properties of
the binomial edge ideal of the join of two graphs. We divide it to three different cases; for two connected graphs, for one connected graph and a disconnected one, and for two disconnected graphs. Also, we generalize the results of \cite{RR} about a cone over a graph. Finally, in Section~\ref{Corona}, we study some properties of the binomial edge ideal of the corona of two graphs.

In this paper, we mean by a graph, a simple graph. Also, if $V=\{v_1,\ldots,v_n\}$ is the vertex set of $G$, then we denote it by $[n]$. By the ring $S$, we mean the polynomial ring in the variables $x_{ij}$, where $i=1,\ldots,m$ and $j=1,\ldots,n$. If $m=2$, then we consider $S$ to be the ring $K[x_1,\ldots,x_n,y_1,\ldots,y_n]$, for simplicity.

\section{ Preliminaries }\label{Preliminaries}

\noindent In this section, we pose some notions and facts, which we need in the sequel. Suppose that $G$ is a graph on $[n]$. A vertex $v$ of $G$ whose deletion from the graph, implies a graph with more connected components than $G$, is called a \textbf{cut point} of $G$. Let $T$ be a subset of $[n]$, and let $G_1,\ldots,G_{c_G(T)}$ be the connected
components of $G_{[n]\setminus T}$, the induce subgraph of $G$ on $[n]\setminus T$. For each $G_i$, we denote by $\widetilde{G}_i$ the complete graph on the vertex set $V(G_i)$. If there is no confusion, we might write $c(T)$ instead of $c_G(T)$, and set $$P_T(G)=(\bigcup_{i\in T}\{x_i,y_i\}, J_{\widetilde{G}_1},\ldots,J_{\widetilde{G}_{c(T)}}).$$ Then, $P_T(G)$ is a prime ideal, where $\mathrm{height}\hspace{0.35mm}P_T(G)=n+|T|-c(T)$, by \cite[Lemma~3.1]{HHHKR}. Moreover, $J_G=\bigcap_{T\subset [n]}P_T(G)$, by \cite[Theorem~3.2]{HHHKR}. So that, $\mathrm{dim}\hspace{0.35mm}S/J_G=\mathrm{max}\{n-|T|+c(T):T\subset [n]\}$, by \cite[Cororally~3.3]{HHHKR}. If each $i\in T$ is a cut point of the graph $G_{([n]\setminus T)\cup \{i\}}$, then we say that $T$ has \textbf{cut point
property} for $G$. Let $\mathcal{C}(G)=\{\emptyset\}\cup \{T\subset [n]:T~\mathrm{has~cut~point~property~for}~G\}$. One has $\mathcal{C}(G)=\{\emptyset\}$ if
and only if $G$ is a complete graph. Denoted by $\mathcal{M}(G)$, we mean the set of all minimal prime ideals of $J_G$. We use $\overline{\mathcal{C}}(G)$ to denote $\mathcal{C}(G)\setminus \{\emptyset\}$. We have that $T\in \mathcal{C}(G)$ if and only if $P_T(G)\in \mathcal{M}(G)$, by \cite[Corollary~3.9]{HHHKR}.

\begin{prop}\label{unmixed1}
\cite[Lemma~2.5]{RR} Let $G$ be a connected graph. Then the following conditions are equivalent:\\
{\em{(a)}} $J_G$ is unmixed. \\
{\em{(b)}} For all $T\in \mathcal{C}(G)$, we have $c(T)=|T|+1$.
\end{prop}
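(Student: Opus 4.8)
The plan is to translate \textbf{unmixedness} into a purely numerical statement about the heights of the minimal primes of $J_G$, and then read off the answer from the height formula $\mathrm{height}\,P_T(G)=n+|T|-c(T)$ together with the description of $\mathcal{M}(G)$ recalled above.

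First I would record that $J_G=\bigcap_{T\subset[n]}P_T(G)$ is radical, being an intersection of prime ideals, so its associated primes coincide with its minimal primes. By the consequence of \cite[Corollary~3.9]{HHHKR} noted in the preliminaries, these minimal primes are exactly the ideals $P_T(G)$ with $T\in\mathcal{C}(G)$; that is, $\mathcal{M}(G)=\{P_T(G):T\in\mathcal{C}(G)\}$. Since $J_G$ is unmixed precisely when all of its associated primes share a common height, unmixedness is equivalent to the assertion that $\mathrm{height}\,P_T(G)$ is independent of $T\in\mathcal{C}(G)$.

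Next I would pin down the value of this common height. The empty set always lies in $\mathcal{C}(G)$, and since $G$ is connected we have $c(\emptyset)=1$; hence $\mathrm{height}\,P_\emptyset(G)=n+0-1=n-1$. Thus, if $J_G$ is unmixed, every minimal prime must have height $n-1$, and applying $\mathrm{height}\,P_T(G)=n+|T|-c(T)$ the condition $\mathrm{height}\,P_T(G)=n-1$ rearranges to $c(T)=|T|+1$, giving $(a)\Rightarrow(b)$. Conversely, if $c(T)=|T|+1$ holds for every $T\in\mathcal{C}(G)$, then $\mathrm{height}\,P_T(G)=n+|T|-(|T|+1)=n-1$ uniformly over $\mathcal{C}(G)$, so all minimal primes have the same height and $J_G$ is unmixed, which proves $(b)\Rightarrow(a)$.

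The argument is essentially bookkeeping once the correct inputs are assembled, so I do not expect a serious obstacle. The one point that requires care is recognizing that the membership $\emptyset\in\mathcal{C}(G)$ (valid because $G$ is connected) forces the common height in the unmixed case to be exactly $n-1$, which is precisely what normalizes the criterion to the clean form $c(T)=|T|+1$. The only other thing to be careful about is using ``unmixed'' in the sense of all associated primes being equidimensional and invoking radicality of $J_G$ to reduce this to the minimal primes enumerated by $\mathcal{M}(G)$.
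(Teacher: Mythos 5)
Your proof is correct. Note that the paper itself gives no proof of this proposition — it is quoted verbatim from \cite[Lemma~2.5]{RR} — so there is nothing internal to compare against; your argument is the natural one and coincides with the cited source's: radicality of $J_G$ identifies associated primes with the minimal primes $P_T(G)$, $T\in\mathcal{C}(G)$, and the membership $\emptyset\in\mathcal{C}(G)$ together with $c(\emptyset)=1$ (connectedness) pins the common height at $n-1$, so the height formula $\mathrm{height}\,P_T(G)=n+|T|-c(T)$ turns unmixedness into exactly the condition $c(T)=|T|+1$.
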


In \cite{EHHQ}, the authors classified all pairs of graphs $(G_1,G_2)$ such that $J_{G_1,G_2}$ is unmixed:

\begin{prop}\label{unmixed2}
\cite[Proposition~4.1]{EHHQ} Let $n\geq m\geq 3$ be integers and let $G_1$ and $G_2$ be connected simple
graphs with $V(G_1)=[m]$ and $V(G_2)=[n]$. Then the binomial edge ideal $J_{G_1,G_2}$ is
unmixed if and only if $G_1$ is complete and for all $T\in \mathcal{C}(G_2)$, one has $(c(T)-1)(m-1)=|T|$.
\end{prop}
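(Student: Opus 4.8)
The plan is to reduce the statement to a computation of the heights of the minimal primes of $J_{G_1,G_2}$. Since $J_{G_1,G_2}$ is radical, it is unmixed precisely when all of its minimal primes share a common height, so everything comes down to describing those primes. I would use the fact that each minimal prime has the form
$$P_{T_1,T_2}=\Big(\{x_{it}:i\in T_1,\ t\in[n]\}\cup\{x_{it}:i\in[m],\ t\in T_2\},\ \sum_{k,l}I_2(X_{V(D_k),V(C_l)})\Big),$$
where $T_1$ ranges over $\mathcal{C}(G_1)$ and $T_2$ over $\mathcal{C}(G_2)$ (subject to an admissibility condition that is automatic as soon as one of $T_1,T_2$ is empty), $D_k$ are the connected components of $(G_1)_{[m]\setminus T_1}$, $C_l$ those of $(G_2)_{[n]\setminus T_2}$, and $I_2(X_{V(D_k),V(C_l)})$ is the ideal of $2$-minors of the corresponding block of $X$. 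Each such ideal is prime, being the sum, on disjoint variable sets, of a prime generated by indeterminates and of determinantal ideals $I_2$.

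The first main step treats $G_1=K_m$ and simultaneously produces the condition on $G_2$. When $G_1$ is complete, $\mathcal{C}(G_1)=\{\emptyset\}$, so the minimal primes are exactly the $P_{\emptyset,T}=(\{x_{it}:t\in T\},\ \sum_l I_2(X_{[m],V(C_l)}))$ with $T\in\mathcal{C}(G_2)$. Using that $I_2$ of an $(m\times q)$-matrix of indeterminates has height $(m-1)(q-1)$ and that $\sum_l|V(C_l)|=n-|T|$, I obtain
$$\height P_{\emptyset,T}=m|T|+\sum_l (m-1)\big(|V(C_l)|-1\big)=(m-1)n+|T|-(m-1)c(T).$$
As $G_2$ is connected, $\emptyset\in\mathcal{C}(G_2)$ and $\height P_{\emptyset,\emptyset}=(m-1)(n-1)$. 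Hence $J_{K_m,G_2}$ is unmixed iff $\height P_{\emptyset,T}=(m-1)(n-1)$ for every $T\in\mathcal{C}(G_2)$, which rearranges to $(c(T)-1)(m-1)=|T|$, proving the equivalence under the hypothesis that $G_1$ is complete.

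It remains to see that unmixedness forces $G_1$ to be complete. The prime $P_{\emptyset,\emptyset}=I_2(X)$ is always a minimal prime, of height $(m-1)(n-1)$. If $G_1$ is not complete then $\mathcal{C}(G_1)\neq\{\emptyset\}$, so I may pick a nonempty $T_1\in\mathcal{C}(G_1)$, and completing $G_2$ to $K_n$ (that is, taking $T_2=\emptyset$) yields the prime $P_{T_1,\emptyset}$, whose height, by the same formula with rows and columns exchanged, is
$$\height P_{T_1,\emptyset}=(m-1)(n-1)+|T_1|-(n-1)\big(c_{G_1}(T_1)-1\big).$$
A nonempty cut-point set satisfies $c_{G_1}(T_1)\geq 2$ and $|T_1|\leq m-2$, so together with $n\geq m$ one gets $|T_1|\leq m-2<(n-1)\leq (n-1)(c_{G_1}(T_1)-1)$, whence $\height P_{T_1,\emptyset}<(m-1)(n-1)$. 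Thus $J_{G_1,G_2}$ has two minimal primes of distinct heights and is not unmixed, which is exactly the required contrapositive.

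The main obstacle is the structural input: that the candidate ideals above, and $P_{T_1,\emptyset}$ in particular, are genuinely minimal primes of $J_{G_1,G_2}$ and not merely primes containing the larger ideal $J_{G_1,K_n}$. I would establish this via the transpose identification $J_{G_1,K_n}\cong J_{K_n,G_1}$ together with the Pl\"ucker-type relations $x_{is}\,[i,j|t,l]=x_{it}\,[i,j|s,l]-x_{il}\,[i,j|s,t]$ and the analogue obtained by interchanging the two rows: since $G_2$ is connected, inverting a suitable product $f$ of row-variables forces $J_{G_1,G_2}$ and $J_{G_1,K_n}$ to acquire the same minimal primes in the localization $S_f$, and $f$ can be chosen outside $P_{T_1,\emptyset}$, so minimality and the asserted height of $P_{T_1,\emptyset}$ over $J_{G_1,G_2}$ reduce to the complete-graph computation of the first step. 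Equivalently, one simply invokes the primary decomposition of $J_{G_1,G_2}$ from \cite{EHHQ}; the delicate point is verifying that its admissible index set contains every pair $(T_1,\emptyset)$ with $T_1\in\mathcal{C}(G_1)$.
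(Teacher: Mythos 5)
The paper itself offers no proof of this statement: it is quoted as \cite[Proposition~4.1]{EHHQ}, so your proposal has to stand on its own, effectively reconstructing the argument of that reference. Your skeleton is the natural one and your height computations are all correct: $\mathrm{height}\, P_{\emptyset,T}=(m-1)n+|T|-(m-1)c(T)$, the unmixedness condition $(c(T)-1)(m-1)=|T|$ when $G_1$ is complete, and the strict inequality $\mathrm{height}\, P_{T_1,\emptyset}<(m-1)(n-1)$ for $\emptyset\neq T_1\in\mathcal{C}(G_1)$. The reduction to minimal primes is also repairable even though your opening claim is false: $J_{G_1,G_2}$ is \emph{not} radical in general (indeed \cite{EHHQ} prove it is not radical when neither graph is complete), but since minimal primes are always associated primes, two minimal primes of different heights still contradict unmixedness, and in the case $G_1$ complete the ideal is radical with known decomposition, so that direction is safe.

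The genuine gap is exactly the point you defer: that $P_{T_1,\emptyset}$ is a minimal prime of $J_{G_1,G_2}$ when $G_1$ is \emph{not} complete. Neither of your two justifications works. You cannot ``simply invoke the primary decomposition of $J_{G_1,G_2}$ from \cite{EHHQ}'': that paper determines the minimal primes only when one of the two graphs is complete; for non-complete pairs no such decomposition is available there (consistently with the failure of radicality). And the localization mechanism does not follow formally from the Pl\"ucker relations: a set $T_1$ with the cut point property need not be an independent set of $G_1$. Take $G_1$ to be the double star (adjacent centers $i,j$, each with two pendant vertices) and $T_1=\{i,j\}\in\mathcal{C}(G_1)$, or $G_1=K_2^c * K_3$, whose only nonempty member of $\mathcal{C}(G_1)$ is the triangle. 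If $\{i,j\}\subseteq T_1$ is an edge of $G_1$, then every variable $x_{is},x_{js}$ lies in $P_{T_1,\emptyset}$, hence cannot divide $f$, and the relations you quote give no way to pass from the minors $[i,j|t,l]$ with $\{t,l\}\in E(G_2)$ to arbitrary $[i,j|t,l]$; in the double-star example the propagation closure demonstrably never reaches the pair $(i,j)$, so the localized ideals $J_{G_1,G_2}S_f$ and $J_{G_1,K_n}S_f$ do not coincide. The equality of their \emph{minimal primes} is in fact true, but proving it is precisely the missing content, not a consequence of the relations.

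Two ways to close the gap. Either choose $T_1$ more carefully: take $T_1$ to be an inclusion-minimal set separating two nonadjacent vertices $u,v$ of $G_1$; then every vertex of $T_1$ has neighbors in both the $u$- and $v$-components, so $T_1$ has the cut point property, any two adjacent vertices of $T_1$ have neighbors in a common component $D$, and pivoting through a row of $D$ via $x_{kt}[i,j|t,l]=x_{it}[k,j|t,l]+x_{jt}[i,k|t,l]$ really does give $J_{G_1,G_2}S_f=J_{G_1,K_n}S_f$. Or prove minimality directly: if $Q$ is a prime with $J_{G_1,G_2}\subseteq Q\subseteq P_{T_1,\emptyset}$, then $Q$ contains no variable from rows outside $T_1$, so propagation yields $I_2(X_{V(D_k),[n]})\subseteq Q$ for every component and $I_2(X_{\{i,j\},[n]})\subseteq Q$ for every $i\in T_1$ and neighbor $j\notin T_1$; since each $i\in T_1$ has neighbors $j\in D_k$, $j'\in D_{k'}$ with $k\neq k'$, the assumption that some $x_{it}\notin Q$ would force rows $j$ and $j'$ to be proportional modulo $Q$, putting a cross-component minor $[j,j'|t,l]$ into $Q\subseteq P_{T_1,\emptyset}$ --- impossible, as such minors are nonzero modulo $P_{T_1,\emptyset}$ (its quotient is a tensor product of determinantal domains). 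Hence all rows indexed by $T_1$ vanish modulo $Q$ and $Q=P_{T_1,\emptyset}$. With either repair, your argument becomes a complete proof.
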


So, by combining these two propositions, one has the following, which will be used in this paper.

\begin{prop}\label{unmixed3}
Let $n\geq m\geq 2$ be integers and let $G_1$ and $G_2$ be connected simple
graphs with $V(G_1)=[m]$ and $V(G_2)=[n]$. Then the binomial edge ideal $J_{G_1,G_2}$ is
unmixed if and only if $G_1$ is complete and for all $T\in \mathcal{C}(G_2)$, one has $(c(T)-1)(m-1)=|T|$.
\end{prop}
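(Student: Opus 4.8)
The plan is to deduce this from the two propositions already recorded, by covering the range $m\geq 2$ with the two overlapping cases $m\geq 3$ and $m=2$. For $m\geq 3$ the assertion is verbatim Proposition~\ref{unmixed2}, so nothing new is needed there, and the entire argument concerns the single remaining case $m=2$.

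For $m=2$ I would first note that the only connected simple graph on the vertex set $[2]$ is a single edge, which is complete. Consequently the hypothesis ``$G_1$ is complete'' is automatically fulfilled when $m=2$, and the assertion reduces to showing that $J_{G_1,G_2}$ is unmixed if and only if $(c(T)-1)(m-1)=|T|$ for all $T\in\mathcal C(G_2)$.

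To establish this equivalence I would invoke the remark from the introduction that, when $G_1$ is an edge, $J_{G_1,G_2}\iso J_{G_2}$; in particular the former is unmixed exactly when $J_{G_2}$ is. Since $G_2$ is connected, Proposition~\ref{unmixed1} then yields that $J_{G_2}$ is unmixed if and only if $c(T)=|T|+1$ for every $T\in\mathcal C(G_2)$. It remains only to observe that at $m=2$ the factor $m-1$ equals $1$, so the condition $(c(T)-1)(m-1)=|T|$ is literally $c(T)-1=|T|$, i.e.\ $c(T)=|T|+1$; the two conditions therefore coincide, completing the case $m=2$ and hence the proof.

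I do not expect any real obstacle here: the result is the amalgamation of Propositions~\ref{unmixed1} and~\ref{unmixed2} across their respective ranges, together with the fact that $J_{G_1,G_2}$ degenerates to an ordinary binomial edge ideal when $m=2$. The only step demanding any attention is verifying that the linear relation $(c(T)-1)(m-1)=|T|$ specialises correctly to the condition appearing in Proposition~\ref{unmixed1} at $m=2$, which it does precisely because the coefficient $m-1$ collapses to $1$.
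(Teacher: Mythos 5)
Your proposal is correct and is exactly the argument the paper intends: the statement is obtained by combining Proposition~\ref{unmixed2} (for $m\geq 3$) with Proposition~\ref{unmixed1} applied via the identification $J_{G_1,G_2}\iso J_{G_2}$ when $G_1$ is an edge (for $m=2$), where the factor $m-1=1$ makes $(c(T)-1)(m-1)=|T|$ collapse to $c(T)=|T|+1$. Your write-up simply makes explicit the details that the paper leaves to the reader.
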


\section{ Binomial edge ideal of a class of chordal graphs }\label{Binomial edge ideal of a class of chordal graphs}

\noindent In this section, we study unmixed and Cohen-Macaulay properties of the binomial edge ideal of a class of chordal graphs, called generalized block graphs. Indeed, we generalize a result of Ene, Herzog and Hibi in \cite{EHH}. Generalized block graphs were introduced in \cite{KS}, as a generalization of block graphs. Here, we recall the definition. A \textbf{nonseparable} graph is a connected nontrivial graph which does not have any cut points. A \textbf{block} of a graph is a maximal nonseparable subgraph of it. A \textbf{block graph} is a connected graph whose blocks are cliques. Here, by a clique, we mean a complete subgraph of a graph. One could see that a graph $G$ is a block graph if and only if it is a chordal graph such that every two maximal cliques of it intersect in at most one vertex. This class was considered in~\cite[Theorem~1.1]{EHH}. Recall that $\Delta(G)$ is the clique complex of the graph $G$, the simplicial complex whose facets are the vertex sets of the maximal cliques of $G$. Now, let $G$ be a connected chordal graph such that for every three maximal cliques of $G$ which have a nonempty intersection, the intersection of each pair of them is the same. In other words, $G$ has this property that for every $F_i,F_j,F_k\in \Delta(G)$, if $F_i\cap F_j\cap F_k\neq \emptyset$, then $F_i\cap F_j=F_i\cap F_k=F_j\cap F_k$. We call $G$, a \textbf{generalized block} graph. Thus, it is clear that all block graphs and hence all trees are also generalized block graphs. For example, the graphs depicted in Figure~\ref{generalized2} and Figure~\ref{generalized1} are generalized block graphs which are not block graphs. Here, by a \textbf{cut set} of a graph $G$, we mean a subset of vertices of $G$ whose deletion increases the number of connected components of $G$. Moreover, by a \textbf{minimal cut set} of $G$, we mean a cut set which is minimal under inclusion. One could see that a subset $A$ of the vertices of a generalized block graph $G$ is a minimal cut set if and only if there exist $F_{i_1},\ldots,F_{i_t}\in \Delta(G)$ such that $\bigcap_{j=1}^{t}F_{i_j}=A$, and for all other facets $F$ of $\Delta(G)$, $F\cap A=\emptyset$. In this case, we sometimes say that $A$ is a $t$-minimal cut set of $G$.

Now, recall that the \textbf{clique number} of a graph $G$, denoted by $\omega(G)$, is the maximum size of the maximal cliques of $G$. Let $G$ be a generalized block graph on $[n]$. For each $i=1,\ldots,\omega(G)-1$, we set $$\mathcal{A}_{i}(G):=\{A\subseteq [n] : |A|=i,A~\mathrm{is~a~minimal~cut~set~of~}G\}.$$
Also, we put $a_i(G):=|\mathcal{A}_{i}(G)|$, for all $i=1,\ldots,\omega(G)-1$. Thus, a generalized block graph $G$ is a block graph if and only if $a_i(G)=0$, for all $i>1$.

\begin{exam}\label{generalized block1}
{\em (a) Let $G_1$ be the graph shown in Figure~\ref{generalized2}. Then, it has two $3$-minimal cut sets, $A=\{x,y\}$ and $A'=\{z,w\}$. Thus, ${\mathcal{A}}_2(G_1)=\{A,A'\}$, and hence $a_2(G_1)=2$. But, $a_i(G_1)=0$ for all $i\neq 2$, since $G_1$ has no other minimal cut set. \\
\indent (b) Let $G_2$ be the graph shown in Figure~\ref{generalized1}. Then it has two minimal cut sets; $A=\{x\}$ is a $2$-minimal cut set and $A'=\{y,z\}$ is a $3$-minimal cut set. So, ${\mathcal{A}}_1(G_2)=\{A\}$ and ${\mathcal{A}}_2(G_2)=\{A'\}$, which imply that $a_1(G_2)=1$, $a_2(G_2)=1$ and $a_i(G_2)=0$ for all $i\neq 1,2$.  }
\end{exam}

Recall that a facet $F$ of a simplicial complex $\Delta$ is a \textbf{leaf}, if either $F$ is the only facet, or there exists a facet $G$, called a \textbf{branch} of $F$, such that for each facet $H$ of $\Delta$, with $H\neq F$, one has $H\cap F \subseteq G\cap F$. Each leaf $F$ has at least a free vertex. A simplicial complex $\Delta$ is called a \textbf{quasi-forest}, if its facets can be ordered as $F_1,\ldots,F_r$ such that for all $i>1$, $F_i$ is a leaf of the subcomplex of $\Delta$ with facets $F_1,\ldots,F_{i-1}$. Such an order is called a \textbf{leaf order}. Now, we compute the depth of the binomial edge ideal of a generalized block graph in the following theorem.
\begin{center}
\begin{figure}
\hspace{0 cm}
\includegraphics[height=2.1cm,width=3.2cm]{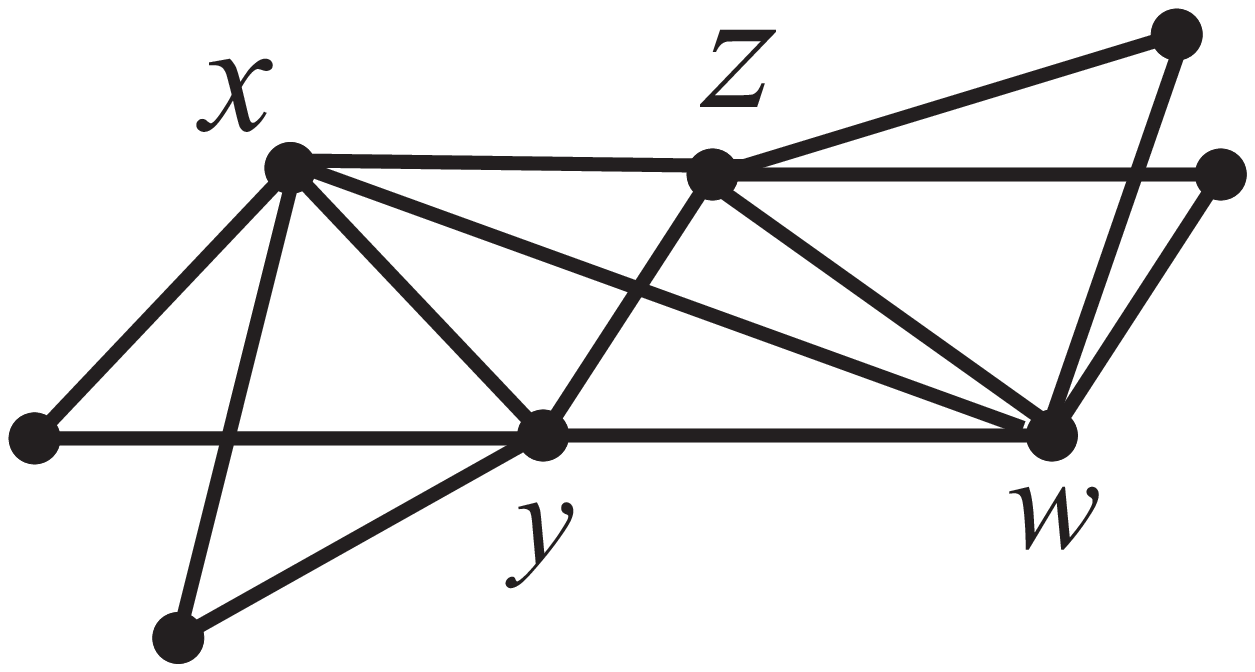}
\caption{\footnotesize{}A generalized block graph}\hspace{2 cm}
\label{generalized2}
\end{figure}
\end{center}
\begin{center}
\begin{figure}
\hspace{0 cm}
\includegraphics[height=1.9cm,width=3.2cm]{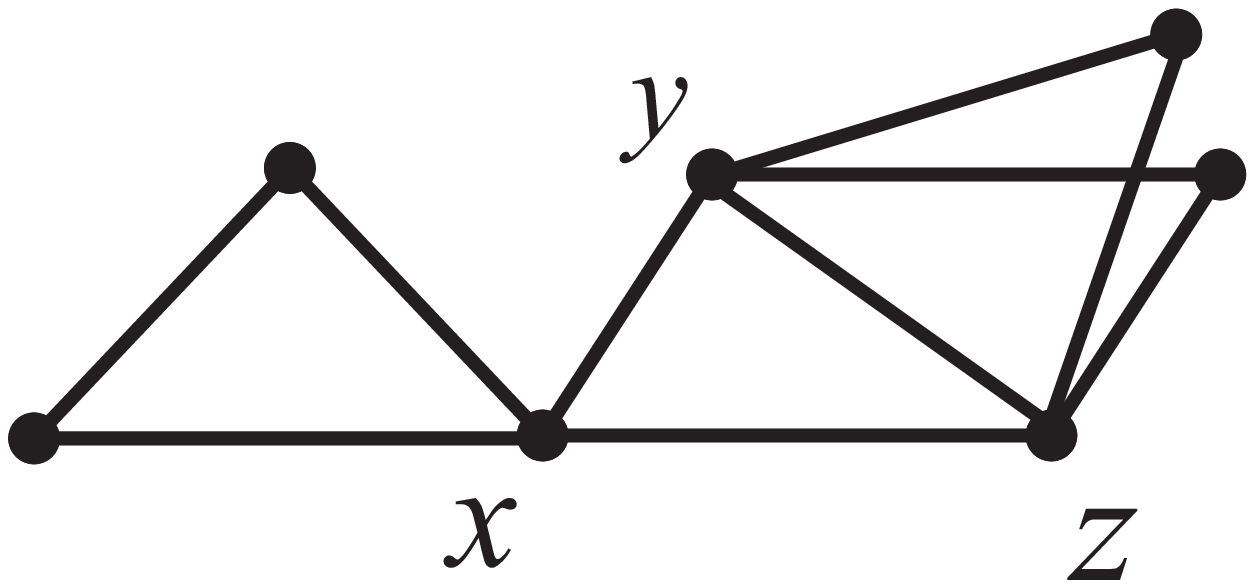}
\caption{\footnotesize{}A generalized block graph}\hspace{2 cm}
\label{generalized1}
\end{figure}
\end{center}

\begin{thm}\label{depth-generalized}
Let $G$ be a generalized block graph on $[n]$, with $r$ connected components. Then we have $$\mathrm{depth}(S/J_G)=n+r-\sum_{i=2}^{\omega(G)-1}(i-1)a_i(G).$$
\end{thm}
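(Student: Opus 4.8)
The plan is to compute the depth by induction on the number of minimal cut sets of positive cardinality $\geq 2$, reducing a generalized block graph to simpler ones whose depth is already known. The natural base case is a block graph (where $a_i(G)=0$ for all $i\geq 2$), for which the depth formula $\depth(S/J_G)=n+r$ should follow from the Ene--Herzog--Hibi results in \cite{EHH}, since block graphs with Cohen--Macaulay binomial edge ideals correspond exactly to the unmixed case. First I would set up the reduction: pick a minimal cut set $A$ with $|A|=i\geq 2$ and $A\in\mathcal{A}_i(G)$, say $A=\bigcap_{j=1}^{t}F_{i_j}$ for facets $F_{i_1},\ldots,F_{i_t}$ of $\Delta(G)$. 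I want to modify $G$ near $A$ so as to kill this cut set while controlling how the depth changes, then invoke induction on $\sum_{i\geq 2}(i-1)a_i(G)$.

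**The key technical tool** will be a short exact sequence relating $S/J_G$ to the binomial edge ideals of two related graphs. The standard device (used in \cite{EHH}, \cite{KS}) is to fix an edge $e=\{u,v\}$ and compare $J_G$ with $J_{G\setminus e}$ and $J_{(G\setminus e)\cup\{\text{new edges}\}}$ via a sequence of the form $$0\To S/(J_{G'}:f_{uv})\To S/J_{G'}\dirsum S/(J_{G'},f_{uv})\To S/(J_{G'}+(f_{uv}))\To 0,$$ or more precisely the Mayer--Vietoris style sequence coming from $J_G=J_{G'}\cap(\text{something})$. The plan is to choose the edge/vertex operation so that one of the three terms is the binomial edge ideal of a generalized block graph with strictly smaller statistic $\sum(i-1)a_i$, and the colon ideal $(J_{G'}:f_{uv})$ is identified—again using the prime decomposition $J_G=\bigcap_T P_T(G)$ from \cite{HHHKR}—with the binomial edge ideal of yet another graph to which induction applies. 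Tracking the vertex count, the number $r$ of components, and the change in each $a_i$ under this operation is the bookkeeping that produces the correction term $\sum_{i=2}^{\omega(G)-1}(i-1)a_i(G)$.

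**Concretely**, for a $t$-minimal cut set $A$ with $t$ cliques glued along it, I expect the right move to be ``splitting'' one of the cliques $F_{i_j}$ off from $A$, i.e.\ deleting the edges joining $F_{i_j}\setminus A$ to $A$, which replaces a single component by a configuration whose cut-set data is strictly simpler. Each such split of a cut set of size $i$ should lower the depth by exactly $i-1$ relative to the ``fully separated'' block graph, which is precisely why the weight $(i-1)$ appears and why cut sets of size $1$ contribute nothing. I would compute the depth of the colon and sum terms using the additivity of depth for the complete-graph pieces $\widetilde{G}_s$ (whose binomial edge ideals are determinantal and Cohen--Macaulay, so their depth is known exactly) and the behavior of depth under the direct sum in the middle term of the sequence, applying the depth lemma for short exact sequences to pin down $\depth(S/J_G)$.

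**The main obstacle** I anticipate is the inductive identification of the colon ideal and of the auxiliary graphs as \emph{generalized block graphs} again: the class of generalized block graphs is defined by the delicate condition that any three pairwise-intersecting facets share a common intersection, and it is not automatic that deleting edges around a cut set preserves this property. I would need to verify carefully that the splitting operation yields graphs still in the class (so that the induction hypothesis, i.e.\ the depth formula, is available for them) and that the change in the invariant $\sum_{i\geq 2}(i-1)a_i(G)$ is exactly what the depth lemma predicts. A secondary subtlety is ensuring the new components introduced by splitting are counted correctly in $r$, since separating a clique from a cut set of size $i$ can create or merge components in a way that must be reconciled with the additive term $n+r$.
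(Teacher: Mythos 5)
Your overall architecture (a Mayer--Vietoris type sequence $0\to S/J_G\to S/Q\oplus S/Q'\to S/(Q+Q')\to 0$ plus the depth lemma, with bookkeeping of the $a_i$) has the right flavor, and your statistic $\sum_{i\ge 2}(i-1)a_i(G)$ would indeed decrease under a correct reduction; but two of your concrete choices are genuine gaps. First, your base case is unproved. The formula $\mathrm{depth}(S/J_G)=n+r$ for \emph{all} block graphs does not follow from \cite[Theorem~1.1]{EHH}: that theorem only characterizes when $J_G$ is Cohen--Macaulay (equivalently unmixed), and says nothing about the depth of non-unmixed block graphs. For instance $K_{1,3}$ is a block graph (a tree) that is neither unmixed nor Cohen--Macaulay, with $\dim (S/J_{K_{1,3}})=6$ while the claimed depth is $5$; no CM$\Leftrightarrow$unmixed statement can produce that number. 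Proving the base case requires exactly the inductive machinery you are trying to set up, so your induction is circular at its foundation. The paper escapes this by inducting on the number of maximal cliques instead, with \emph{complete graphs} as the base case: Dirac's theorem makes $\Delta(G)$ a quasi-forest, one takes a leaf $F_c$ with branches $F_{t_1},\dots,F_{t_q}$, and their common intersection $A$ (of any size $\alpha\ge 1$) is processed uniformly, so cut vertices and larger cut sets need no separate treatment.

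Second, your concrete reduction goes in the wrong direction. In the decomposition $J_G=Q\cap Q'$, obtained by splitting the minimal primes $P_T(G)$ according to whether $A\cap T=\emptyset$ or $A\subseteq T$, the key identifications are $Q=J_{G'}$ where $G'$ is obtained from $G$ by \emph{adding} edges (replacing $F_c,F_{t_1},\dots,F_{t_q}$ by a single clique on their union), $Q'=(x_i,y_i:i\in A)+J_{G_{[n]\setminus A}}$, and $Q+Q'=(x_i,y_i:i\in A)+J_{{G'}_{[n]\setminus A}}$; all three auxiliary graphs are again generalized block graphs, and the weight $\alpha-1$ enters because $a_\alpha$ drops by one in each of them while the three depths computed by induction differ by constants that the depth lemma then reconciles. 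Your proposed operation of deleting the edges joining $F_{i_j}\setminus A$ to $A$ produces a graph whose binomial edge ideal is none of $Q$, $Q'$, $Q+Q'$: already for three triangles glued along a common edge $A=\{a,b\}$ one has $Q=J_{K_5}$ and $Q'=(x_a,y_a,x_b,y_b)$, whereas your split yields the ideal of two glued triangles plus an isolated vertex, which appears nowhere in the sequence. The colon-ideal sequence you offer as an alternative would require identifying $(J:f_e)$ for each of the $|A|\cdot|F_{i_j}\setminus A|$ deleted edges, which you do not carry out and which is not where the weights $(i-1)$ come from. Note also that the paper chooses $A$ at a \emph{leaf} of the quasi-forest rather than at an arbitrary minimal cut set; this is what guarantees $F_c\cap F_l=\emptyset$ for all non-branches $F_l$ and makes the identifications of $Q$, $Q'$ and $Q+Q'$ provable.
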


\begin{proof}
The proof is based on the technique applied in the proof of \cite[Theorem~1.1]{EHH}, and \cite[Theorem~3.18]{KS}. By Dirac's theorem (see \cite{D}), we have that $\Delta(G)$ is a quasi forest, since $G$ is chordal. Let $F_1,\ldots,F_c$ be a leaf order of the facets of $\Delta(G)$. We use induction on $c=c(G)$, the number of maximal cliques of $G$. If $c=1$, then $G$ is complete and it is well-known that $\mathrm{depth}(S/J_G)=n+1$. Let $c>1$. Note that we can assume that $G$ is a connected graph, because if $G_1,\ldots,G_r$ are the connected components of $G$ on $[n_1],\ldots,[n_r]$, respectively, then $\mathrm{depth}(S/J_G)=\sum_{i=1}^{p}\mathrm{depth}(S_i/J_{G_i})$, where $S_i=K\big{[}x_j,y_j:j\in [n_i]\big{]}$. Let $F_{t_1},\ldots,F_{t_q}$ be all the branches of the leaf $F_c$. Note that $q\geq 1$. Since $G$ is a generalized block graph, each pair of the facets $F_c,F_{t_1},\ldots,F_{t_q}$ intersect in exactly the same set of vertices, like $A$ with $|A|=\alpha\geq 1$, and also, $F_c\cap F_l=\emptyset$, for all $l\neq t_1,\ldots,t_q$, as $F_c$ is a leaf. Then, for all $l\neq t_1,\ldots,t_q$, we have $A\cap F_l=\emptyset$. Actually, $A$ is a $(q+1)$-minimal cut set of $G$. Now, let $J_G=Q\cap Q'$, where
\begin{equation}
Q=\bigcap_{\substack{
T\subseteq [n] \\
A\nsubseteq T
}}
P_T(G)~~,~~Q'=\bigcap_{\substack{
T\subseteq [n] \\
A\subseteq T
}}
P_T(G).
\nonumber
\end{equation}
Note that we have
\begin{equation}
Q=\bigcap_{\substack{
T\in \mathcal{C}(G) \\
A\nsubseteq T
}}
P_T(G)=
\bigcap_{\substack{
T\in \mathcal{C}(G) \\
A\cap T=\emptyset
}}
P_T(G),
\nonumber
\end{equation}
since similar to the proof of \cite[Theorem~3.18]{KS}, for $T\subset [n]$ with $T\in \mathcal{C}(G)$, we have that $A\nsubseteq T$ if and only if $A\cap T=\emptyset$; as if $|A|>1$ and $v\in A\cap T$, then $v$ is not a cut point of the graph $G_{([n]\setminus T)\cup \{v\}}$, since $A\setminus T\neq \emptyset$, so we get a contradiction.

Let $G'$ be the graph obtained from $G$, by replacing the cliques $F_c,F_{t_1},\ldots,F_{t_q}$, by the clique on the vertex set $F_c\cup (\bigcup_{j=1}^qF_{t_j})$. One could see that $Q=J_{G'}$ and $Q'=(x_i,y_i:i\in A)+J_{G_{[n]\setminus A}}$. Thus, $Q+Q'=(x_i,y_i:i\in A)+J_{{G'}_{[n]\setminus A}}$. Therefore, we have $S/Q=S/J_{G'}$, $S/Q'\cong S_A/J_{G_{[n]\setminus A}}$ and $S/(Q+Q')\cong S_A/J_{{G'}_{[n]\setminus A}}$, where $S_A=K[x_i,y_i:i\in [n]\setminus A]$. It is not difficult to observe that the graphs $G'$, $G_{[n]\setminus A}$ and ${G'}_{[n]\setminus A}$ are generalized block graphs. Note that by the construction of generalized block graphs, we have for all $i\neq \alpha$, ${\mathcal{A}}_i(G')={\mathcal{A}}_i({G'}_{[n]\setminus A})={\mathcal{A}}_i(G)$ and ${\mathcal{A}}_i(G_{[n]\setminus A})\subseteq {\mathcal{A}}_i(G)$. But, ${\mathcal{A}}_{\alpha}(G')={\mathcal{A}}_{\alpha}({G'}_{[n]\setminus A})={\mathcal{A}}_{\alpha}(G)\setminus \{A\}$ and ${\mathcal{A}}_{\alpha}(G_{[n]\setminus A})\subseteq {\mathcal{A}}_{\alpha}(G)\setminus \{A\}$. Thus, we have for all $i\neq \alpha$, $a_i(G')=a_i({G'}_{[n]\setminus A})=a_i(G)$, $a_i(G_{[n]\setminus A})\leq a_i(G)$, also $a_{\alpha}(G')=a_{\alpha}({G'}_{[n]\setminus A})=a_{\alpha}(G)-1$ and $a_{\alpha}(G_{[n]\setminus A})\leq a_{\alpha}(G)-1$. Since the number of maximal cliques of $G'$ is less than $G$, by induction hypothesis, we get
\begin{eqnarray}
\mathrm{depth}(S/J_{G'})&=& n+1-\sum_{i=2}^{\omega(G')-1}(i-1)a_i(G')
\nonumber\\
\nonumber &= & {}
n+1-\sum_{\substack{
i=2 \\
i\neq \alpha
}}^{\omega(G)-1}(i-1)a_i(G)-(\alpha-1)(a_{\alpha}(G)-1)
\nonumber\\
\nonumber &=& {} n+\alpha-\sum_{i=2}^{\omega(G)-1}(i-1)a_i(G).
\nonumber
\end{eqnarray}
On the other hand, $G_{[n]\setminus A}$ has $q+1$ connected components, say $H_1,\ldots,H_{q+1}$ on $[n_1],\ldots,[n_{q+1}]$, respectively. So, the induction hypothesis is valid for these connected components, and hence we have
\begin{eqnarray}
\mathrm{depth}(S_A/J_{G_{[n]\setminus A}})&=& \sum_{j=1}^{q+1}\mathrm{depth}(S_A^j/J_{H_j})
\nonumber\\
\nonumber &= & {}
\sum_{j=1}^{q+1}\big{(}n_j+1-\sum_{i=2}^{\omega(H_j)-1}(i-1)a_i(H_j)\big{)}
\nonumber\\
\nonumber &=& {} (n-\alpha)+(q+1)-\sum_{i=2}^{\omega(G_{[n]\setminus A})-1}(i-1)a_i(G_{[n]\setminus A})
\nonumber\\
\nonumber &\geq & {} (n-\alpha)+(q+1)-\sum_{\substack{
i=2 \\
i\neq \alpha
}}^{\omega(G)-1}(i-1)a_i(G)-(\alpha-1)(a_{\alpha}(G)-1)
\nonumber\\
\nonumber &=& {} n+q-\sum_{i=2}^{\omega(G)-1}(i-1)a_i(G),
\nonumber
\end{eqnarray}
where $S_A^j=K\big{[}x_i,y_i:i\in [n_j]\big{]}$, for $j=1,\ldots,q+1$.
Moreover, since the number of maximal cliques of ${G'}_{[n]\setminus A}$ is less than $G$, by the induction hypothesis, we get
\begin{eqnarray}
\mathrm{depth}(S/J_{{G'}_{[n]\setminus A}})&=& (n-\alpha)+1-\sum_{i=2}^{\omega({G'}_{[n]\setminus A})-1}(i-1)a_i({G'}_{[n]\setminus A})
\nonumber\\
\nonumber &= & {}
(n-\alpha)+1-\sum_{\substack{
i=2 \\
i\neq \alpha
}}^{\omega(G)-1}(i-1)a_i(G)-(\alpha-1)(a_{\alpha}(G)-1)
\nonumber\\
\nonumber &=& {} n-\sum_{i=2}^{\omega(G)-1}(i-1)a_i(G).
\nonumber
\end{eqnarray}
Now, it is enough to apply the depth lemma (see for example \cite[Proposition~1.2.9]{BH}) to the short exact sequence $$0\rightarrow S/J_G\rightarrow S/Q\oplus S/Q'\rightarrow S/(Q+Q')\rightarrow 0.$$ Then, we get the desired conclusion.
\end{proof}

Consequently, we show that the only generalized block graphs whose binomial edge ideals are Cohen-Macaulay, are those block graphs discussed in \cite[Theorem~1.1]{EHH}:

\begin{cor}\label{CM-generalized}
Let $G$ be a generalized block graph. Then $S/J_G$ is Cohen-Macaulay if and only if $G$ is a block graph whose each vertex is the intersection of at most two maximal cliques.
\end{cor}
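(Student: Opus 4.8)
The plan is to read off Cohen--Macaulayness from the equality $\mathrm{depth}(S/J_G)=\dim(S/J_G)$, using Theorem~\ref{depth-generalized} for the depth and the dimension formula $\dim(S/J_G)=\max_{T\subseteq[n]}(n-|T|+c(T))$ recalled in Section~\ref{Preliminaries}. First I would reduce to the connected case: if $G_1,\dots,G_r$ are the connected components of $G$, then $J_G=\sum_i J_{G_i}$ in pairwise disjoint sets of variables, so $S/J_G\cong\bigotimes_{i}S_i/J_{G_i}$ over $K$, and a tensor product of finitely generated graded $K$-algebras is Cohen--Macaulay if and only if each factor is. Since $G$ is a block graph with every vertex in at most two maximal cliques exactly when each $G_i$ has this property, it suffices to treat a connected generalized block graph $G$, so that $r=1$.

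For such $G$, two inequalities do the work. On one side, Theorem~\ref{depth-generalized} gives $\mathrm{depth}(S/J_G)=n+1-\sum_{i=2}^{\omega(G)-1}(i-1)a_i(G)\le n+1$, with equality if and only if $a_i(G)=0$ for all $i\ge 2$, that is, if and only if $G$ is a block graph. On the other side, taking $T=\emptyset$ in the dimension formula gives $\dim(S/J_G)\ge n+c(\emptyset)=n+1$, since $G$ is connected (equivalently, $P_\emptyset(G)=J_{\widetilde G}$ is a minimal prime of height $n-1$).

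For the implication that Cohen--Macaulayness forces the stated structure, I would note that $\mathrm{depth}=\dim$ then squeezes both quantities to equal $n+1$. From $\mathrm{depth}=n+1$ I conclude $a_i(G)=0$ for all $i\ge2$, so $G$ is a block graph. From $\dim=n+1$ I get, for every vertex $v$, that $n-1+c(\{v\})\le\dim(S/J_G)=n+1$, hence $c(\{v\})\le 2$; and in a block graph the number of connected components of $G\setminus v$ equals the number of maximal cliques containing $v$, so every vertex lies in at most two maximal cliques.

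For the converse, these graphs are precisely the block graphs characterized in \cite[Theorem~1.1]{EHH}, whose binomial edge ideals are Cohen--Macaulay; together with the componentwise reduction this closes the proof. I expect the only genuinely graph-theoretic step, and hence the main obstacle, to be the identification of $c(\{v\})$ with the number of maximal cliques through $v$ in a block graph (a consequence of the tree-like arrangement of blocks), since the two numerical inequalities above are otherwise immediate from Theorem~\ref{depth-generalized} and the dimension formula.
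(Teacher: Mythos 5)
Your proposal is correct and takes essentially the same route as the paper: Cohen--Macaulayness forces $\mathrm{depth}(S/J_G)=\dim(S/J_G)$, Theorem~\ref{depth-generalized} then forces $\sum_{i\geq 2}(i-1)a_i(G)=0$, so $G$ is a block graph, and \cite[Theorem~1.1]{EHH} completes the argument. The only difference is minor: you extract the ``at most two maximal cliques'' condition directly from $\dim(S/J_G)=n+1$ via $c(\{v\})\leq 2$ together with the fact that in a block graph $c(\{v\})$ equals the number of maximal cliques through $v$, whereas the paper simply quotes \cite[Theorem~1.1]{EHH} at that point; both steps are sound.
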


\begin{proof}
If $S/J_G$ is Cohen-Macaulay, then by \cite[Corollary~3.4]{HHHKR}, we have $\mathrm{dim}(S/J_G)=n+r$. So that $\mathrm{depth}(S/J_G)=n+r$. Thus, by Theorem~\ref{depth-generalized}, we have $\sum_{i=2}^{\omega(G)-1}(i-1)a_i(G)=0$, which implies that $a_i(G)=0$, for all $i>1$. Hence, $G$ is a block graph. Then, by using \cite[Theorem~1.1]{EHH}, we get the result.
\end{proof}

While unmixedness and Cohen-Macaulayness of the binomial edge ideal are equivalent for block graphs, this is not true for generalized block graphs, as it is shown in the next theorem. Since it is known that the binomial edge ideal of a graph is unmixed if and only if the binomial edge ideal of its connected components are unmixed, here we focus on the connected case:

\begin{thm}\label{unmixed-generalized}
Let $G$ be a connected generalized block graph on $[n]$. Then $J_G$ is unmixed if and only if the following conditions hold: \\
{\em{(a)}} For every $t$-minimal cut set $A$ of $G$, we have $|A|=t-1$; \\
{\em{(b)}} If there are some minimal cut sets $A_1,\ldots,A_s$ whose union is a maximal clique of $G$, then $|A_i|=1$, for some $i=1,\ldots,s$.
\end{thm}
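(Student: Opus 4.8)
The plan is to apply the unmixedness criterion of Proposition~\ref{unmixed1}: since $G$ is connected, $J_G$ is unmixed exactly when $c(T)=|T|+1$ for every $T\in\mathcal{C}(G)$. As $c(\emptyset)=1$, this says every minimal prime $P_T(G)$ has the same height $n-1$ as $P_\emptyset(G)$. So the entire problem reduces to two combinatorial tasks: describing the members of $\mathcal{C}(G)$, and computing $c(T)$ for them, in terms of the minimal cut sets of $G$. I write $t(A)$ for the number of maximal cliques containing a $t$-minimal cut set $A$, so that $|A|=t(A)-1$ is the content of condition (a).

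First I would record the structural facts that make generalized block graphs tractable, all of which follow from the facet-intersection condition in the definition (and the analysis of \cite{KS}). Distinct minimal cut sets are disjoint: if $A$ and $A'$ shared a vertex $v$, choosing a facet $F\supseteq A'$ with $F\not\supseteq A$ would give $v\in A\cap F=\emptyset$. Moreover, for two facets with $F_0\cap F_1\neq\emptyset$, the generalized block condition forces every facet meeting $F_0\cap F_1$ to contain it, so $F_0\cap F_1$ is itself a minimal cut set; together with chordality (the clique complex is a quasi-forest) this makes the bipartite incidence $\mathcal{T}$ on maximal cliques and minimal cut sets, with an edge for each containment, a tree, with $\sum_A t(A)=c(G)+m-1$ edges, where $c(G)$ and $m$ count the maximal cliques and the minimal cut sets. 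Next, every $T\in\mathcal{C}(G)$ is a disjoint union of minimal cut sets: a free vertex is simplicial in $G_{([n]\setminus T)\cup\{v\}}$ and so never a cut point, hence each $v\in T$ lies in a unique minimal cut set $A_v$, and if some $w\in A_v\setminus T$ survived it would keep the cliques through $A_v$ connected after deleting $v$, contradicting the cut-point property; thus $A_v\subseteq T$. Finally, deleting from $\mathcal{T}$ the cut-set nodes making up $T=\bigsqcup_l A_{j_l}$ and counting components of the resulting forest yields the key formula $c(T)=1+\sum_l\bigl(t(A_{j_l})-1\bigr)-e(T)$, where $e(T)$ is the number of maximal cliques $F$ with $F\subseteq T$; such ``swallowed'' facets become isolated empty nodes of $\mathcal{T}-U$ and contribute a phantom component that must be subtracted.

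With these in hand both directions are short. Assuming $J_G$ unmixed, applying the criterion to $T=A$ (which lies in $\mathcal{C}(G)$, with $c(A)=t(A)$) gives $t(A)=|A|+1$, i.e.\ (a); and if a maximal clique were a union $A_1\cup\cdots\cup A_s$ with all $|A_i|\ge2$, then by (a) all $t(A_i)\ge3$, so $T=F$ lies in $\mathcal{C}(G)$ (each vertex splits into $t(A_i)-1\ge2$ pieces) while $e(F)=1$ forces $c(F)=1+|F|-e(F)=|F|<|F|+1$, a contradiction, giving (b). Conversely, under (a) and (b) the formula collapses to $c(T)=1+|T|-e(T)$ for every $T\in\mathcal{C}(G)$, so it suffices to prove $e(T)=0$. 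If some facet $F\subseteq T$, then $F$ is a union of minimal cut sets lying in $T$, and by (b) one of them is a singleton $\{v\}$, which by (a) is a $2$-minimal cut set, so $v$ lies in exactly one further clique $F^\ast$; then in $G_{([n]\setminus T)\cup\{v\}}$ the vertex $v$ is merely appended to the clique-remnant $F^\ast\setminus T$ and is not a cut point, contradicting $T\in\mathcal{C}(G)$. Hence $e(T)=0$ and $c(T)=|T|+1$.

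I expect the main obstacle to be the two structural inputs feeding the formula, namely that $\mathcal{T}$ is genuinely a tree (so the component count is exact) and that $\mathcal{C}(G)$ consists only of disjoint unions of minimal cut sets; once $c(T)$ is pinned down, the rest is bookkeeping. The conceptual heart is isolating the swallowed-facet quantity $e(T)$: condition (a) makes every term $t(A)-1-|A|$ vanish, so the sole remaining obstruction to $c(T)=|T|+1$ is a maximal clique being absorbed into some $T\in\mathcal{C}(G)$, and condition (b) is precisely the statement that a singleton cut set blocks this absorption already at the level of the cut-point property.
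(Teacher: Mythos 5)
Your proposal is correct and takes essentially the same route as the paper's proof: both reduce to Proposition~\ref{unmixed1}, decompose every $T\in\mathcal{C}(G)$ into pairwise disjoint minimal cut sets, obtain (a) by applying the criterion to a single cut set, obtain (b) by showing that the union-clique $T$ lies in $\mathcal{C}(G)$ with $c(T)=|T|$, and prove the converse by the identical singleton/$2$-minimal-cut-set argument ruling out a maximal clique being contained in $T$, followed by the count $c(T)=\sum_{l}t_{i_l}-p+1=|T|+1$. The only distinction is presentational: your incidence tree $\mathcal{T}$ and the swallowed-facet quantity $e(T)$ give a uniform derivation of the component counts (and of the facts the paper introduces with ``one could see that''), which the paper instead asserts case by case.
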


\begin{proof}
Suppose that $J_G$ is unmixed. Let $A$ be a $t$-minimal cut set of $G$. Then, clearly, $A\in \mathcal{C}(G)$ and $c_G(A)=t$. Thus, by Proposition~\ref{unmixed1}, we have $|A|=c_G(A)-1=t-1$ and hence condition (a) holds. Now, for all $i=1,\ldots,s$, let $A_i$ be a $t_i$-minimal cut set of $G$ whose union is a maximal clique of $G$ such that $|A_i|>1$. Then, by (a), we have $t_i>2$, so that $A_i$ is the intersection of at least three maximal cliques of $G$. Thus, by the construction of generalized block graphs, $T:=\bigcup_{i=1}^sA_i \in \mathcal{C}(G)$. But, it is easy to see that $c_G(T)=|T|=\sum_{i=1}^st_{i}-s$, which contradicts to the unmixedness of $J_G$, by Proposition~\ref{unmixed1}. Thus, condition (b) also holds. For the converse, let $\emptyset \neq T\in \mathcal{C}(G)$. One could see that $T=\bigcup_{j=1}^pA_{i_j}$, where $A_{i_j}$ is a $t_{i_j}$-minimal cut set of $G$. By (a), we have $|T|=\sum_{j=1}^p|A_{i_j}|=\sum_{j=1}^pt_{i_j}-p$. Note that if there are some of these $A_{i_j}$'s whose union is a maximal clique of $G$, then one of them should be a singleton, say $\{v\}$, by (b). But, in this case $T\notin \mathcal{C}(G)$, since $v$ is the intersection of exactly two maximal cliques of $G$, by (a), and hence is not a cut point of $G_{([n]\setminus T)\cup \{v\}}$. So, we get a contradiction, and hence $T$ does not contain any maximal cliques of $G$. Thus, there is a vertex of each maximal clique of $G$ in the graph $G_{[n]\setminus T}$, so that  $c_G(T)=\sum_{j=1}^pt_{i_j}-p+1$. Therefore, applying Proposition~\ref{unmixed1}, we obtain that $J_G$ is unmixed, as desired.
\end{proof}

\begin{exam}\label{generalized block2}
{\em We use the notation of Example~\ref{generalized block1}. The ideal $J_{G_1}$ is not unmixed, by the above theorem. While condition (a) of the theorem is true for this graph, but $A\cup A'$ is the set of the vertices of a maximal clique of $G_1$, which is isomorphic to $K_4$. So that condition (b) does not hold for this graph, as $A$ and $A'$ are not singletons. On the other hand, by Theorem~\ref{unmixed-generalized}, the ideal $J_{G_2}$ is unmixed, but $S/J_{G_2}$ is not Cohen-Macaulay, since it is not a block graph. }
\end{exam}

\section{ Binomial edge ideal of the join of graphs }\label{Join}

\noindent In this section, we investigate about unmixedness and Cohen-Macaulayness of binomial ideals associated to the join of connected and disconnected graphs. Let $G$ and $H$ be two graphs on $[m]$ and $[n]$, respectively. We denote by $G*H$, the \textit{join} (product) of two graphs $G$ and $H$, that is
the graph with vertex set $[m]\cup [n]$, and the edge set $E(G)\cup E(H)\cup \{\{v,w\}~:~v\in [m],~w\in [n]\}$. In particular, the cone of a vertex $v$ on a graph $G$ is defined to be their join, that is $v*G$, and is denoted by $\mathrm{cone}(v,G)$. Let $V$ be a set. To simplify our notation throughout this paper, we introduce the join of two collection of subsets of $V$, $\mathcal{A}$ and $\mathcal{B}$, denoted by $\mathcal{A}\circ \mathcal{B}$, as $\{A\cup B: A\in \mathcal{A}, B\in \mathcal{B}\}$. If $\mathcal{A}_1,\ldots,\mathcal{A}_t$ are collections of subsets of $V$, then we denote their join, by $\bigcirc_{i=1}^{t}\mathcal{A}_i$. If $\mathcal{A}$ is empty, then $\mathcal{A}\circ \mathcal{B}=\emptyset$, for every $\mathcal{B}$.

\subsection{Join of two connected graphs}\label{Join of two connected graphs}

The following proposition determines all minimal prime ideals of the binomial edge ideal of the join of two connected graphs $G_1$ and $G_2$ with respect to those of $G_1$ and $G_2$.

\begin{prop}\label{both connected}
If $G_1$ and $G_2$ are connected graphs on disjoint sets of vertices $[n_1]$ and $[n_2]$, respectively, then we have \\\\
{\em{(a)}} $\mathcal{C}(G_1*G_2)=\{\emptyset\}\cup \big{(}\overline{\mathcal{C}}(G_1)\circ \{[n_2]\}\big{)}\cup \big{(}\overline{\mathcal{C}}(G_2)\circ \{[n_1]\}\big{)}$. \\\\
{\em{(b)}} $\mathrm{height}\hspace{0.35mm}J_{G_1*G_2}=\mathrm{min}\{\mathrm{height}\hspace{0.35mm}P_{T_1}(G_1)+2n_2,
\mathrm{height}\hspace{0.35mm}P_{T_2}(G_2)+2n_1,n_1+n_2-1:T_1\in \overline{\mathcal{C}}(G_1), T_2\in \overline{\mathcal{C}}(G_2)\}$. \\\\
{\em{(c)}} $\mathrm{dim}\hspace{0.35mm}S/J_{G_1*G_2}=\mathrm{max}\{\mathrm{dim}\hspace{0.35mm}S_1/P_{T_1}(G_1),\mathrm{dim}\hspace{0.35mm}S_2/P_{T_2}(G_2),n_1+n_2+1: T_1\in \overline{\mathcal{C}}(G_1), T_2\in \overline{\mathcal{C}}(G_2)\}$, where $S_1=K\big{[}x_i,y_i:i\in [n_1]\big{]}$ and $S_2=K\big{[}x_i,y_i:i\in [n_2]\big{]}$.
\end{prop}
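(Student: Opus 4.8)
The plan is to reduce everything to the combinatorial description in part (a), and then to read off (b) and (c) directly from the height formula $\height P_T(G)=n+|T|-c(T)$ recorded in Section~\ref{Preliminaries} together with the fact that $J_G=\bigcap_{T\in\mathcal C(G)}P_T(G)$ is the minimal prime decomposition (so that $\height J_G=\min_{T\in\mathcal C(G)}\height P_T(G)$ and $\dim S/J_G=\max_{T\in\mathcal C(G)}\dim S/P_T(G)$). Throughout I write $G=G_1*G_2$ on $[n]=[n_1]\cup[n_2]$ with $n=n_1+n_2$.

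For (a) I would exploit the decisive structural feature of a join: if $[n_1]\setminus T\neq\emptyset$ and $[n_2]\setminus T\neq\emptyset$, then $G_{[n]\setminus T}$ is connected, since any surviving vertex of $[n_1]$ is joined by a crossing edge to every surviving vertex of $[n_2]$ and conversely. I use this first to pin down the shape of $T$: suppose a nonempty $T\in\mathcal C(G)$ contains neither $[n_1]$ nor $[n_2]$. Then for any $i\in T$ both parts still meet $([n]\setminus T)\cup\{i\}$ even after deleting $i$, so both $G_{([n]\setminus T)\cup\{i\}}$ and $G_{[n]\setminus T}$ are connected and $i$ is not a cut point, contradicting the cut point property. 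Hence every nonempty $T\in\mathcal C(G)$ satisfies $[n_1]\subseteq T$ or $[n_2]\subseteq T$; and $T=[n]$ is excluded because then $G_{([n]\setminus T)\cup\{i\}}$ is a single vertex with no cut point, so the two inclusions cannot hold simultaneously.

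Next I treat the case $[n_2]\subseteq T$, $[n_1]\not\subseteq T$, writing $T=T_1\cup[n_2]$ with $T_1=T\cap[n_1]\subsetneq[n_1]$. Since $G_{[n]\setminus T}=(G_1)_{[n_1]\setminus T_1}$, we have $c_G(T)=c_{G_1}(T_1)$, and the cut point condition splits into two parts. For $i\in T_1$ the graph $G_{([n]\setminus T)\cup\{i\}}$ equals $(G_1)_{([n_1]\setminus T_1)\cup\{i\}}$, so $i$ is a cut point there exactly when it is a cut point in $G_1$; for $i\in[n_2]$ the vertex $i$ is joined to all of $[n_1]\setminus T_1$, so it is a cut point precisely when $c_{G_1}(T_1)\geq 2$. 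I would then note that a nonempty $T_1$ with the cut point property for $G_1$ automatically satisfies $c_{G_1}(T_1)\geq 2$: deleting a cut point strictly increases the number of components of a nonempty graph, so $(G_1)_{[n_1]\setminus T_1}$ has at least two components. Conversely $T_1=\emptyset$ gives $c_{G_1}(\emptyset)=1$ and fails the condition for $i\in[n_2]$. This shows that $T\in\mathcal C(G)$ with $[n_2]\subseteq T\not\supseteq[n_1]$ is exactly $T=T_1\cup[n_2]$ with $T_1\in\overline{\mathcal C}(G_1)$, i.e. $T\in\overline{\mathcal C}(G_1)\circ\{[n_2]\}$; the symmetric argument handles $[n_1]\subseteq T$, and together with $T=\emptyset$ this yields (a).

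Finally, for (b) and (c) I substitute the description of (a) into $\height P_T(G)=n+|T|-c(T)$. For $T=\emptyset$ this is $n_1+n_2-1$; for $T=T_1\cup[n_2]$ it equals $(n_1+|T_1|-c_{G_1}(T_1))+2n_2=\height P_{T_1}(G_1)+2n_2$, and symmetrically $\height P_{[n_1]\cup T_2}(G)=\height P_{T_2}(G_2)+2n_1$. Taking the minimum over these three families gives (b); part (c) follows identically from $\dim S/P_T(G)=2n-\height P_T(G)=n-|T|+c(T)$, where the contribution of $T=T_1\cup[n_2]$ collapses to $n_1-|T_1|+c_{G_1}(T_1)=\dim S_1/P_{T_1}(G_1)$ (and likewise for $G_2$), while $T=\emptyset$ contributes $n_1+n_2+1$. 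The main obstacle is the cut point bookkeeping in (a): one must treat the vertices of $[n_2]\subseteq T$ separately from those of $T_1$ and observe the implication $T_1\in\overline{\mathcal C}(G_1)\Rightarrow c_{G_1}(T_1)\geq 2$, which is precisely what keeps spurious sets such as $[n_2]$ alone out of $\mathcal C(G)$. Once (a) is in place, (b) and (c) are routine substitutions.
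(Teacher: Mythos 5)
Your proposal is correct and takes essentially the same route as the paper: the same vertex-by-vertex cut point analysis for (a) (join connectivity forces $[n_1]\subseteq T$ or $[n_2]\subseteq T$; vertices of $T_1$ behave exactly as in $G_1$; a vertex $i\in[n_2]$ is a cut point of $G_{([n]\setminus T)\cup\{i\}}=i*(G_1)_{[n_1]\setminus T_1}$ precisely when $(G_1)_{[n_1]\setminus T_1}$ is disconnected), followed by the same substitution of (a) into $\mathrm{height}\,P_T(G)=n+|T|-c(T)$ to obtain (b) and (c). If anything, you are slightly more explicit than the paper on two minor points: the justification that $T_1\in\overline{\mathcal{C}}(G_1)$ forces $c_{G_1}(T_1)\geq 2$, and the exclusion of $T=[n]$ and of $T=[n_1]$ or $[n_2]$ alone.
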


\begin{proof}
Suppose that $G:=G_1*G_2$ and $n=n_1+n_2$.\\
\indent (a) Let $T\in \overline{\mathcal{C}}(G_1)\circ \{[n_2]\}$. So, $T=T_1\cup [n_2]$, where $\emptyset \neq T_1\in \mathcal{C}(G_1)$. We show that $T$ has cut point property, and hence $T\in \mathcal{C}(G)$. Let $i\in T$. If $i\in T_1$, then $G_{([n]\setminus T)\cup \{i\}}={G_1}_{([n_1]\setminus T_1)\cup \{i\}}$. In this case, $i$ is a cut point of ${G_1}_{([n_1]\setminus T_1)\cup \{i\}}$, since $T_1\in \mathcal{C}(G_1)$. So that $i$ is also a cut point of $G_{([n]\setminus T)\cup \{i\}}$. If $i\in [n_2]$, then $G_{([n]\setminus T)\cup \{i\}}=i*{G_1}_{([n_1]\setminus T_1)}$. So, $i$ is a cut point of $G_{([n]\setminus T)\cup \{i\}}$, since $T_1\in \mathcal{C}(G_1)$ and hence $G_{([n_1]\setminus T_1)}$ is disconnected. Thus, in both cases, $T$ has cut point property. Similarly, the elements of $\overline{\mathcal{C}}(G_2)\circ \{[n_1]\}$ are contained in $\mathcal{C}(G)$. For the other inclusion, let $\emptyset \neq T\in \mathcal{C}(G)$. If $T$ does not contain $[n_1]$ and $[n_2]$, then $G_{[n]\setminus T}$ is connected, and hence no element $i$ of $T$ is a cut point of $G_{([n]\setminus T)\cup \{i\}}$. So, we have $[n_1]\subseteq T$ or $[n_2]\subseteq T$. On the other hand, because $G_1$ and $G_2$ are connected, we have $T\neq [n_1]$ or $T\neq [n_2]$. If $[n_1]\subsetneq T$, then
there is $\emptyset \neq T_2\subsetneq [n_2]$, such that $T=[n_1]\cup T_2$. We have $G_{[n]\setminus T}={G_2}_{[n_2]\setminus T_2}$. Since $T\in \mathcal{C}(G)$, each element $i$ of $T_2$ is a cut point of $G_{([n]\setminus T)\cup \{i\}}$, so that it is also a cut point of ${G_2}_{([n_2]\setminus T_2)\cup \{i\}}$. Hence, $T_2\in \mathcal{C}(G_2)$, and so $T\in \mathcal{C}(G_2)\circ \{[n_1]\}$. If $[n_2]\subsetneq T$, then similarly we get $T\in \mathcal{C}(G_1)\circ \{[n_2]\}$.\\
\indent (b) Let $\emptyset \neq T\in \mathcal{C}(G)$. By (a), $T\in \overline{\mathcal{C}}(G_1)\circ \{[n_2]\}$ or $T\in \overline{\mathcal{C}}(G_2)\circ \{[n_1]\}$. If $T\in \overline{\mathcal{C}}(G_1)\circ \{[n_2]\}$, then we have $|T|=|T_1|+n_2$ and $c_G(T)=c_{G_1}(T_1)$, for some $\emptyset \neq T_1\in \mathcal{C}(G_1)$. Also, we have $\mathrm{height}\hspace{0.35mm}P_{T_1}(G_1)=n_1+|T_1|-c_{G_1}(T_1)$. Hence, $\mathrm{height}\hspace{0.35mm}P_{T}(G)=n+|T|-c_G(T)=2n_2+\mathrm{height}\hspace{0.35mm}P_{T_1}(G_1)$. Similarly, if $T\in \overline{\mathcal{C}}(G_2)\circ \{[n_1]\}$, then
$\mathrm{height}\hspace{0.35mm}P_{T}(G)=2n_1+\mathrm{height}\hspace{0.35mm}P_{T_2}(G_2)$, for some $\emptyset \neq T_2\in \mathcal{C}(G_2)$. Thus, the desired result follows. \\
\indent (c) follows by $\mathrm{dim}\hspace{0.35mm}S/J_{G}=2n-\mathrm{height}\hspace{0.35mm}J_{G}$.
\end{proof}

In the above proposition, if $n_1=1$ or $n_2=1$, then we have the cone of a vertex on a connected graph, which yields
\cite[Lemma~3.1 and Corollary~3.2]{RR}.

\begin{thm}\label{both connected-unmixed}
Let $G_1$, $H_1$ and $H_2$ be connected graphs on $[m]$, $[n_1]$, and $[n_2]$, respectively, where $2\leq m\leq n_1+n_2$. Suppose that $G_2:=H_1*H_2$, and $J_{G_1,H_1}$ and $J_{G_1,H_2}$ are unmixed. Then the following conditions are equivalent:\\
{\em{(a)}} $S/J_{G_1,G_2}$ is Cohen-Macaulay. \\
{\em{(b)}} $J_{G_1,G_2}$ is unmixed. \\
{\em{(c)}} $G_1$ and $G_2$ are complete.
\end{thm}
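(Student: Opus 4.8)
The plan is to prove the cycle of implications (a) $\Rightarrow$ (b) $\Rightarrow$ (c) $\Rightarrow$ (a), reserving the bulk of the work for (b) $\Rightarrow$ (c). The two outer implications are routine. For (a) $\Rightarrow$ (b) I would simply invoke the standard fact that a Cohen--Macaulay quotient ring is equidimensional with no embedded primes, hence unmixed. For (c) $\Rightarrow$ (a), observe that when $G_1$ and $G_2$ are both complete the generators $p_{e,f}=[i,j\mid t,l]$ range over all pairs $i<j$ in $[m]$ and $t<l$ in $[n_1+n_2]$, so $J_{G_1,G_2}$ is exactly the ideal $I_2(X)$ of $2\times 2$ minors of the generic $m\times(n_1+n_2)$ matrix $X$; the quotient $S/I_2(X)$ is a generic determinantal ring and is therefore Cohen--Macaulay by the classical theory (see, e.g., \cite{BH}).

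For the central implication (b) $\Rightarrow$ (c), I would start from the assumption that $J_{G_1,G_2}$ is unmixed and apply Proposition~\ref{unmixed3} to the pair $(G_1,G_2)$; this is legitimate since $G_1$ and $G_2=H_1*H_2$ are connected and $2\le m\le n_1+n_2=|V(G_2)|$. This immediately yields that $G_1$ is complete---half of (c)---together with the numerical condition $(c_{G_2}(T)-1)(m-1)=|T|$ for every $T\in\mathcal{C}(G_2)$. It then remains to force $G_2=H_1*H_2$ to be complete, that is, to force both $H_1$ and $H_2$ to be complete.

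The key step is to feed the hypotheses on $J_{G_1,H_1}$ and $J_{G_1,H_2}$ into this condition. Using Proposition~\ref{both connected}(a) for $H_1*H_2$, any $T_1\in\overline{\mathcal{C}}(H_1)$ produces $T:=T_1\cup[n_2]\in\mathcal{C}(G_2)$ with $|T|=|T_1|+n_2$ and $c_{G_2}(T)=c_{H_1}(T_1)$, so the condition above reads $(c_{H_1}(T_1)-1)(m-1)=|T_1|+n_2$. On the other hand, unmixedness of $J_{G_1,H_1}$ yields a competing identity without the $n_2$ shift: when $m\le n_1$, Proposition~\ref{unmixed3} applied to $(G_1,H_1)$ gives $(c_{H_1}(T_1)-1)(m-1)=|T_1|$ for all $T_1\in\mathcal{C}(H_1)$, and subtracting the two identities forces $n_2=0$, a contradiction; hence $\overline{\mathcal{C}}(H_1)=\emptyset$ and $H_1$ is complete. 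When $m>n_1$ (the case $n_1=1$ being trivial since then $H_1=K_1$), I would instead use the transpose symmetry $J_{G_1,H_1}\cong J_{H_1,G_1}$ and apply Proposition~\ref{unmixed3} to $(H_1,G_1)$, which now forces $H_1$ to be complete directly. The symmetric argument with $H_2$ and $n_1$ shows $H_2$ is complete, whence $G_2=H_1*H_2$ is complete and (c) holds.

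The main obstacle is precisely this comparison of numerical identities: the condition coming from $\mathcal{C}(G_2)$ alone is not in itself contradictory, and it is only the unshifted identity supplied by the unmixedness of $J_{G_1,H_i}$ that produces the decisive discrepancy and collapses $\overline{\mathcal{C}}(H_i)$ to the empty set. The accompanying bookkeeping hazard to watch is the relation between $m$ and $n_i$, which dictates whether Proposition~\ref{unmixed3} is applied to $(G_1,H_i)$ or, via transposing the generic matrix, to $(H_i,G_1)$, together with the degenerate boundary cases $n_i=1$.
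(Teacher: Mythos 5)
Your proposal is correct and follows essentially the same route as the paper: apply Proposition~\ref{unmixed3} to $(G_1,G_2)$ to get $G_1$ complete plus the numerical condition, lift a cut set $T_1\in\overline{\mathcal{C}}(H_1)$ to $T_1\cup[n_2]\in\mathcal{C}(G_2)$ via Proposition~\ref{both connected}, and play the shifted identity $(c_{H_1}(T_1)-1)(m-1)=|T_1|+n_2$ against the unshifted one from unmixedness of $J_{G_1,H_1}$ to force $n_2=0$. The only cosmetic difference is that you handle the case $m>n_1$ explicitly via the transpose symmetry $J_{G_1,H_1}\cong J_{H_1,G_1}$, whereas the paper reaches the same point by observing that $H_1$ not complete forces $m\le n_1$ (implicitly using that same symmetry), and you spell out the determinantal-ring argument for (c)~$\Rightarrow$~(a) that the paper labels well-known.
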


\begin{proof}
(a) $\Rightarrow$ (b) and (c) $\Rightarrow$ (a) are well-known. \\
\indent (b) $\Rightarrow$ (c) Suppose that $J_{G_1,G_2}$ is unmixed. So, by Proposition~\ref{unmixed3}, $G_1$ is a complete graph. Suppose on the contrary that
$G_2$ is not complete. So, either $H_1$ or $H_2$ is not complete. Without loss of generality, we may assume that $H_1$ is not complete. So, $n_1\geq 2$, and there exists
$\emptyset \neq T_1\in \mathcal{C}(H_1)$. Set $T:=T_1\cup [n_2]$. By Proposition~\ref{both connected}, $T\in \mathcal{C}(G_2)$. Also, we have
$|T|=|T_1|+n_2$ and $c_{G_2}(T)=c_{H_1}(T_1)$. Moreover, by Proposition~\ref{unmixed3}, we have $(c_{G_2}(T)-1)(m-1)=|T|$.
On the other hand, by the assumption, $J_{G_1,H_1}$ is unmixed. Since $G_1$ is complete and $H_1$ is not, we have $m\leq n_1$ and $(c_{H_1}(T_1)-1)(m-1)=|T_1|$, again by Proposition~\ref{unmixed3}. Therefore, we have $|T|=|T_2|$, which implies that $n_2=0$, a contradiction.
\end{proof}

Setting $m=2$ in Theorem~\ref{both connected-unmixed}, we get:

\begin{cor}\label{both connected-m=2}
Let $H_1$ and $H_2$ be connected graphs on $[n_1]$, and $[n_2]$, respectively. Suppose that $J_{H_1}$ and $J_{H_2}$ are unmixed. Then $J_{H_1*H_2}$ is unmixed if and only if $H_1$ and $H_2$ are complete.
\end{cor}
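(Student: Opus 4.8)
The plan is to deduce this directly from Theorem~\ref{both connected-unmixed} by specializing to $m=2$. The key observation is that there is essentially only one connected graph $G_1$ on the vertex set $[2]$, namely the single edge $K_2$, and that this graph is complete. Moreover, as recalled in the introduction, when $G_1$ is a single edge the binomial edge ideal of the pair $(G_1,G_2)$ is isomorphic to the ordinary binomial edge ideal $J_{G_2}$. Thus, taking $G_1=K_2$, I would identify $J_{G_1,H_1}\cong J_{H_1}$, $J_{G_1,H_2}\cong J_{H_2}$, and $J_{G_1,G_2}\cong J_{G_2}=J_{H_1*H_2}$.

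Next I would verify that the hypotheses of Theorem~\ref{both connected-unmixed} are met with this choice. The numerical constraint $2\leq m\leq n_1+n_2$ becomes $2\leq n_1+n_2$, which holds automatically since $H_1$ and $H_2$ are connected (hence nonempty) graphs, so that $n_1,n_2\geq 1$. The unmixedness hypotheses on $J_{G_1,H_1}$ and $J_{G_1,H_2}$ translate, via the isomorphisms above, into the assertions that $J_{H_1}$ and $J_{H_2}$ are unmixed, which is exactly what the corollary assumes.

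With the hypotheses in place, Theorem~\ref{both connected-unmixed} yields the equivalence of conditions (b) and (c): the ideal $J_{G_1,G_2}\cong J_{H_1*H_2}$ is unmixed if and only if both $G_1$ and $G_2=H_1*H_2$ are complete. Since $G_1=K_2$ is complete without any further assumption, this reduces the criterion to the single requirement that $H_1*H_2$ be complete.

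Finally, I would translate the condition ``$H_1*H_2$ is complete'' into a statement about $H_1$ and $H_2$ separately. By the definition of the join, every vertex of $H_1$ is already adjacent to every vertex of $H_2$, so the only edges that can possibly be missing from $H_1*H_2$ are those internal to $H_1$ or internal to $H_2$. Hence $H_1*H_2$ is complete precisely when both $H_1$ and $H_2$ are complete, which gives the claimed characterization. I do not anticipate a genuine obstacle here, as this is a direct specialization; the only points requiring care are the identification of the pair ideal with the ordinary binomial edge ideal when $G_1$ is an edge, and the elementary observation that completeness of a join is equivalent to completeness of both factors.
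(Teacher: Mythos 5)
Your proposal is correct and takes exactly the paper's route: the paper derives this corollary precisely by setting $m=2$ in Theorem~\ref{both connected-unmixed}, identifying $J_{K_2,H}$ with $J_H$, and using that $G_1=K_2$ is automatically complete while $H_1*H_2$ is complete if and only if both $H_1$ and $H_2$ are. Your write-up simply makes explicit the routine verifications (the bound $2\leq n_1+n_2$ and the completeness-of-join observation) that the paper leaves implicit.
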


In the above corollary, if $n_1=1$ or $n_2=1$, then we obtain \cite[Theorem~3.3]{RR}:

\begin{cor}\label{both connected-cone}
Let $H$ be a connected graph on $[n]$, and $G=\mathrm{cone}(v,H)$. Assume that $J_{H}$ is unmixed. Then $J_{G}$ is unmixed if and only if $H$ is complete.
\end{cor}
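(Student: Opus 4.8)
The plan is to obtain this statement as the degenerate case of Corollary~\ref{both connected-m=2} in which one of the two factors is a single vertex. By definition, $\mathrm{cone}(v,H)=v*H$ coincides with the join $H*H'$, where $H'$ denotes the graph on the one-element vertex set $\{v\}$. Thus I would set $H_1:=H$ and $H_2:=H'$ and aim to apply Corollary~\ref{both connected-m=2} directly.

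First I would check that the hypotheses of that corollary are satisfied. Both $H_1$ and $H_2$ are connected: $H$ by assumption, and a single vertex trivially. The ideal $J_{H_1}=J_H$ is unmixed by hypothesis, and $J_{H_2}=J_{H'}$ is the zero ideal of the polynomial ring in the two variables attached to $v$ (since $H'$ has no edges), hence prime and in particular unmixed. With all hypotheses in place, Corollary~\ref{both connected-m=2} yields that $J_G=J_{H*H'}$ is unmixed if and only if both $H$ and $H'$ are complete. As $H'=K_1$ is complete, the second condition is automatic, and I conclude that $J_G$ is unmixed if and only if $H$ is complete.

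The only point demanding attention is that the machinery behind Corollary~\ref{both connected-m=2}, ultimately Proposition~\ref{both connected}, be checked to remain valid when a factor has a single vertex. This is routine: for the one-vertex graph $H'$ one has $\overline{\mathcal{C}}(H')=\emptyset$ (as $K_1$ is complete), so $\overline{\mathcal{C}}(H')\circ \{[n]\}=\emptyset$ by the emptiness convention, and Proposition~\ref{both connected}(a) specializes to $\mathcal{C}(G)=\{\emptyset\}\cup\big(\overline{\mathcal{C}}(H)\circ \{v\}\big)$. No further obstruction arises, so the reduction is purely bookkeeping and I expect no genuine difficulty.
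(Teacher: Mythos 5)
Your proposal is correct and takes essentially the same route as the paper: the paper obtains this corollary precisely by specializing Corollary~\ref{both connected-m=2} to the degenerate case where one factor is the single vertex $v$ (i.e.\ $n_1=1$ or $n_2=1$), exactly as you do. Your extra verification that Proposition~\ref{both connected} remains valid for a one-vertex factor (via $\overline{\mathcal{C}}(K_1)=\emptyset$ and the emptiness convention for $\circ$) is just the bookkeeping the paper leaves implicit.
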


\subsection{Join of a connected graph and a disconnected graph}\label{Join of a connected graph with a disconnected one}

The following determines all minimal prime ideals of the binomial edge ideal of the join of a connected graph $G_1$ and a disconnected graph $G_2$ with respect to those of $G_1$ and $G_2$. If $H$ is a graph with connected components $H_1,\ldots,H_r$, then we denote it by $\bigsqcup_{i=1}^r H_i$.

\begin{prop}\label{one connected}
Suppose that $G_1,H_1,\ldots,H_r$ are connected graphs on disjoint sets of vertices $[t],[n_1],\ldots,[n_r]$, respectively, where $r\geq 2$. If $G_2:=\bigsqcup_{i=1}^r H_i$, and $n:=t+\sum_{i=1}^r n_i$, then we have \\\\
{\em{(a)}} $\mathcal{C}(G_1*G_2)=\{\emptyset\}\cup \big{(}(\bigcirc_{i=1}^r \mathcal{C}(H_i))\circ \{[t]\}\big{)}\cup \big{(}\overline{\mathcal{C}}(G_1)\circ (\{\bigcup_{i=1}^r[n_i]\})\big{)}$. \\\\
{\em{(b)}} $\mathrm{height}\hspace{0.35mm}J_{G_1*G_2}=\mathrm{min}\{2t+\sum_{i=1}^r \mathrm{height}\hspace{0.35mm}J_{H_i},2(n-t)+\mathrm{height}\hspace{0.35mm}P_{T}(G_1),n-1: T\in \overline{\mathcal{C}}(G_1)\}$. \\\\
{\em{(c)}} $\mathrm{dim}\hspace{0.35mm}S/J_{G_1*G_2}=\mathrm{max}\{\sum_{i=1}^r \mathrm{dim}\hspace{0.35mm}S_i/J_{H_i},\mathrm{dim}\hspace{0.35mm}S'/P_{T}(G_1),n+1: T\in \overline{\mathcal{C}}(G_1)\}$, where $S_i=K\big{[}x_j,y_j:j\in [n_i]\big{]}$, for $i=1,\ldots,r$, and $S'=K\big{[}x_j,y_j:j\in [t]\big{]}$.
\end{prop}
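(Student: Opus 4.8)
The plan is to establish part (a) first, since (b) and (c) then follow mechanically from the height formula $\mathrm{height}\, P_T(G)=n+|T|-c_G(T)$, the identity $\mathrm{height}\, J_G=\min_{T\in\mathcal C(G)}\mathrm{height}\, P_T(G)$, and $\mathrm{dim}\, S/J_G=2n-\mathrm{height}\, J_G$. Throughout I set $G:=G_1*G_2$ and $W:=\bigcup_{i=1}^r[n_i]$, so the vertex set of $G$ is $[t]\cup W$ and $G$ is connected. The argument parallels Proposition~\ref{both connected}, the new feature being that removing all of $[t]$ now disconnects $G_2$ into the $r\geq 2$ pieces $H_1,\ldots,H_r$.

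The core of (a) is a connectivity dichotomy. If $T\subseteq[n]$ has both $[t]\setminus T\neq\emptyset$ and $W\setminus T\neq\emptyset$, then $G_{[n]\setminus T}$ is connected, since any surviving vertex of $G_1$ is joined to every surviving vertex of $G_2$ and conversely; hence $c_G(T)=1$ and no such nonempty $T$ can have the cut point property. Therefore every $\emptyset\neq T\in\mathcal C(G)$ satisfies $[t]\subseteq T$ or $W\subseteq T$, and not both (as $T=[n]\notin\mathcal C(G)$). In the case $W\subseteq T$, I write $T=W\cup T_1$ with $T_1=T\cap[t]$; deleting $W$ removes all join edges, so for $i\in T_1$ the cut point condition in $G_{([n]\setminus T)\cup\{i\}}=(G_1)_{([t]\setminus T_1)\cup\{i\}}$ is exactly that of $T_1$ in $G_1$, while for $i\in W$ the vertex $i$ is adjacent to all of $[t]\setminus T_1$, so $i$ is a cut point precisely when $(G_1)_{[t]\setminus T_1}$ is disconnected. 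Together these force $\emptyset\neq T_1\in\mathcal C(G_1)$, i.e.\ $T\in\overline{\mathcal C}(G_1)\circ\{W\}$.

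In the case $[t]\subseteq T$, I write $T=[t]\cup T'$ with $T'_j:=T'\cap[n_j]$, so $G_{[n]\setminus T}=\bigsqcup_j(H_j)_{[n_j]\setminus T'_j}$. For $i\in T'_j$ the deletion affects only the $j$-th piece, so the cut point condition reduces to that of $T'_j$ in $H_j$, forcing $T'_j\in\mathcal C(H_j)$ for all $j$; and since $[n_j]\notin\mathcal C(H_j)$ each $T'_j\subsetneq[n_j]$, so $\bigsqcup_j(H_j)_{[n_j]\setminus T'_j}$ always has at least $r\geq 2$ components and every $i\in[t]$ is automatically a cut point. This yields $T\in(\bigcirc_{i=1}^r\mathcal C(H_i))\circ\{[t]\}$, and together with $\{\emptyset\}$ proves (a). For (b) I then evaluate $\mathrm{height}\, P_T(G)$ on each family: $T=\emptyset$ gives $n-1$; for $T=[t]\cup T'$, substituting $\mathrm{height}\, P_{T'_j}(H_j)=n_j+|T'_j|-c_{H_j}(T'_j)$ and $\sum_j n_j=n-t$ collapses the expression to $2t+\sum_j\mathrm{height}\, P_{T'_j}(H_j)$, whose minimum over $\mathcal C(H_j)$ is $2t+\sum_j\mathrm{height}\, J_{H_j}$; for $T=W\cup T_1$ the same bookkeeping gives $2(n-t)+\mathrm{height}\, P_{T_1}(G_1)$. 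The overall minimum is (b), and (c) is immediate from $\mathrm{dim}=2n-\mathrm{height}$ applied term by term, using $\sum_j\mathrm{dim}\, S_j/J_{H_j}=2(n-t)-\sum_j\mathrm{height}\, J_{H_j}$ and $\mathrm{dim}\, S'/P_{T}(G_1)=2t-\mathrm{height}\, P_{T}(G_1)$.

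I expect the main obstacle to lie entirely in part (a), specifically in carrying out both directions of each case cleanly: proving that the listed sets genuinely satisfy the cut point property and, conversely, that nothing else survives the dichotomy. The two delicate points are the \emph{automatic} cut-point condition for $i\in[t]$ in the second case, which is exactly where the hypothesis $r\geq 2$ enters, and the exclusion of the degenerate $T=[n]$, which rests on $[n_j]\notin\mathcal C(H_j)$; both must be handled with care to make the three summands in (a) disjoint and match the indexing.
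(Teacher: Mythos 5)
Your proposal is correct and follows essentially the same route as the paper: part (a) rests on the same connectivity observation (a nonempty $T\in\mathcal{C}(G_1*G_2)$ must contain $[t]$ or $\bigcup_{i=1}^r[n_i]$, since otherwise $G_{[n]\setminus T}$ stays connected), the cut-point conditions are reduced to the factor graphs exactly as in the paper (including the role of $r\geq 2$ for vertices of $[t]$), and parts (b) and (c) are obtained by the same height bookkeeping that the paper imports from Proposition~\ref{both connected}. The only difference is organizational — you state the dichotomy first and then argue by equivalences in each case, whereas the paper first verifies that the two listed families have the cut point property and then proves the reverse inclusion — which does not change the substance.
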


\begin{proof}
(a) Set $G:=G_1*G_2$. Let $T\in (\bigcirc_{i=1}^r \mathcal{C}(H_i))\circ [t]$. So, $T=[t]\cup (\bigcup_{i=1}^r T_i)$, where $T_i\in \mathcal{C}(H_i)$ for $i=1,\ldots,r$. We show that $T$ has cut point property, and hence
$T\in \mathcal{C}(G)$. Let $j\in T$. If $j\in T_i$, for some $i=1,\ldots,r$, then $G_{([n]\setminus T)\cup \{j\}}={H_i}_{([n_i]\setminus T_i)\cup \{j\}}\sqcup (\bigsqcup_{l=1,l\neq i}^r {H_l}_{([n_l]\setminus T_l)})$. In this case, $j$ is a cut point of ${H_i}_{([n_i]\setminus T_i)\cup \{j\}}$, since $T_i\in \mathcal{C}(H_i)$. So that $j$ is also a cut point of $G_{([n]\setminus T)\cup \{j\}}$. If $j\in [t]$, then $G_{([n]\setminus T)\cup \{j\}}=j*\bigsqcup_{i=1}^r {H_i}_{([n_i]\setminus T_i)}$. So, $j$ is a cut point of $G_{([n]\setminus T)\cup \{j\}}$, since $G_{([n]\setminus T)}$ is disconnected. Thus, in both cases, $T$ has cut point property. Now, let $T\in \overline{\mathcal{C}}(G_1)\circ (\{\cup_{i=1}^r[n_i]\})$. So, $T=T_0\cup (\bigcup_{i=1}^r[n_i])$, where $\emptyset\neq T_0\in \mathcal{C}(G_1)$. By a similar discussion as above, one can easily see that $T$ has cut point property. For the other inclusion, let $\emptyset \neq T\in \mathcal{C}(G)$, where $T\notin \overline{\mathcal{C}}(G_1)\circ (\{\cup_{i=1}^r[n_i]\})$. If $T$ does not contain $[t]$, then $G_{[n]\setminus T}$ is connected, and hence no element $i$ of $T$ is a cut point of $G_{([n]\setminus T)\cup \{i\}}$. So, we have $[t]\subseteq T$. Let $T:=[t]\cup (\bigcup_{i=1}^r T_i)$, where
$T_i\subseteq [n_i]$, for $i=1,\ldots,r$. Let $1\leq i\leq r$. If $T_i=\emptyset$, then, obviously, $T_i\in \mathcal{C}(H_i)$. If $T_i\neq \emptyset$, then
each $j\in T_i$, is a cut point of $G_{([n]\setminus T)\cup \{j\}}$, since $T\in \mathcal{C}(G)$. So that $j$ is a cut point of ${H_i}_{([n_i]\setminus T_i)\cup \{j\}}$, because $j\in T_i$ and $H_i$'s are on disjoint sets of vertices. Thus, $T_i\in \mathcal{C}(H_i)$. Therefore, $T\in (\bigcirc_{i=1}^r \mathcal{C}(H_i))\circ [t]$. \\
\indent Parts (b) and (c) follow by similar discussions as in the proof of Proposition~\ref{both connected}, by using part~(a).
\end{proof}

\begin{rem}\label{G_1 complete}
{\em With the notation of Proposition~\ref{one connected}, if $G_1$ is the complete graph on $t$ vertices, then $\mathcal{C}(G_1)=\{\emptyset\}$. Thus, by the above proposition, we have $\mathcal{C}(G_1*G_2)=\{\emptyset\}\cup \big{(}(\bigcirc_{i=1}^r \mathcal{C}(H_i))\circ \{[t]\}\big{)}$, so that we get $\mathrm{height}\hspace{0.35mm}J_{G_1*G_2}=\mathrm{min}\{2t+\sum_{i=1}^r \mathrm{height}\hspace{0.35mm}J_{H_i},n-1\}$ and also $\mathrm{dim}\hspace{0.35mm}S/J_{G_1*G_2}=\mathrm{max}\{\sum_{i=1}^r \mathrm{dim}\hspace{0.35mm}S_i/J_{H_i},n+1\}$. But, even in this case, there are some examples in which either of the terms, appeared in the latter formula for the dimension, might be the maximum. For example, let $G_1=H_1=K_2$. Then $\mathrm{dim}\hspace{0.35mm}S/J_{H_1}=3$. If $H_2=K_3$, then $\mathrm{dim}\hspace{0.35mm}S/J_{H_2}=4$, and hence $\mathrm{dim}\hspace{0.35mm}S/J_{G_1*G_2}=n+1=8$, while $\mathrm{dim}\hspace{0.35mm}S_1/J_{H_1}+\mathrm{dim}\hspace{0.35mm}S_2/J_{H_2}=7$. If $H_2=K_{1,4}$, then one can easily check that $\mathrm{dim}\hspace{0.35mm}S/J_{H_2}=8$, and hence $\mathrm{dim}\hspace{0.35mm}S/J_{G_1*G_2}=\mathrm{dim}\hspace{0.35mm}S_1/J_{H_1}+\mathrm{dim}\hspace{0.35mm}S_2/J_{H_2}=11$, while $n+1=10$. }
\end{rem}

In the above proposition, if $t=1$ and $r=2$, then we get \cite[Lemma~3.5 and Corollary~3.6]{RR}.

\begin{thm}\label{one connected-unmixed2}
Let $G_1,H,H_1,\ldots,H_r$ be connected graphs on $[m],[t],[n_1],\ldots,[n_r]$, respectively,
where $r\geq 2$ and $2\leq m\leq t+\sum_{i=1}^r n_i$. Suppose that $G_2:=H*(\bigsqcup_{i=1}^r H_i)$, and $n_i=1$ or $n_i\geq m$, for $i=1,\ldots,r$.
Then $J_{G_1,G_2}$ is unmixed if and only if the following conditions hold:\\
{\em{(a)}} $J_{G_1,H_i}$ is unmixed for all $i=1,\ldots,r$, \\
{\em{(b)}} $G_1$ is complete,  \\
{\em{(c)}} $t=(r-1)(m-1)$, and \\
{\em{(d)}} for every $\emptyset\neq T\in \mathcal{C}(H)$, $(c_H(T)-1)(m-1)=|T|+\sum_{i=1}^r n_i$.
\end{thm}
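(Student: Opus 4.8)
The plan is to apply the unmixedness criterion of Proposition~\ref{unmixed3} to the pair $(G_1,G_2)$ and then translate the resulting numerical condition on $\mathcal{C}(G_2)$ into the four conditions (a)--(d), using the explicit description of $\mathcal{C}(G_2)$ supplied by Proposition~\ref{one connected}. First I would observe that $G_2=H*(\bigsqcup_{i=1}^r H_i)$ is connected (being a join) and has $t+\sum_{i=1}^r n_i\geq m\geq 2$ vertices, so Proposition~\ref{unmixed3} applies and says $J_{G_1,G_2}$ is unmixed if and only if $G_1$ is complete and $(c_{G_2}(T)-1)(m-1)=|T|$ for every $T\in\mathcal{C}(G_2)$. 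In particular unmixedness forces condition (b), and the whole problem reduces to analyzing this identity over all $T\in\mathcal{C}(G_2)$.

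Next I would invoke Proposition~\ref{one connected}, with its connected factor taken to be $H$, to write $\mathcal{C}(G_2)=\{\emptyset\}\cup\big((\bigcirc_{i=1}^r\mathcal{C}(H_i))\circ\{[t]\}\big)\cup\big(\overline{\mathcal{C}}(H)\circ\{\bigcup_{i=1}^r[n_i]\}\big)$, and then compute $c_{G_2}(T)$ and $|T|$ on each of the two nontrivial strata. For $T=[t]\cup\bigcup_{i=1}^rT_i$ with $T_i\in\mathcal{C}(H_i)$, deleting $T$ removes all of $V(H)$, so every join edge disappears and the surviving graph is $\bigsqcup_i(H_i)_{[n_i]\setminus T_i}$; hence $c_{G_2}(T)=\sum_{i=1}^r c_{H_i}(T_i)$ and $|T|=t+\sum_{i=1}^r|T_i|$. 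For $T=T_0\cup\bigcup_{i=1}^r[n_i]$ with $\emptyset\neq T_0\in\mathcal{C}(H)$, deleting $T$ removes every $H_i$ and leaves $H_{[t]\setminus T_0}$, so $c_{G_2}(T)=c_H(T_0)$ and $|T|=|T_0|+\sum_i n_i$; the numerical identity for these $T$ is precisely condition (d).

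It then remains to show that the first stratum encodes exactly conditions (a) and (c). Specializing to all $T_i=\emptyset$ gives $c_{G_2}(T)=r$ and $|T|=t$, so the identity becomes $(r-1)(m-1)=t$, which is (c). Specializing to a single index $j$ (all other $T_i=\emptyset$) and subtracting (c) yields $(c_{H_j}(T_j)-1)(m-1)=|T_j|$ for all $T_j\in\mathcal{C}(H_j)$; since $G_1$ is complete and $n_j\geq m$, this is exactly the statement that $J_{G_1,H_j}$ is unmixed by Proposition~\ref{unmixed3}, while the case $n_j=1$ is vacuous since there $\mathcal{C}(H_j)=\{\emptyset\}$ and $J_{G_1,H_j}=0$ is trivially unmixed. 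Conversely, granting (a)--(d) and using (b) to invoke the criterion, the general first-stratum identity follows by summing the per-component relations $(c_{H_i}(T_i)-1)(m-1)=|T_i|$ over $i$ and adding $(r-1)(m-1)=t$, which reassembles $(\sum_i c_{H_i}(T_i)-1)(m-1)=t+\sum_i|T_i|=|T|$; together with (d) on the second stratum this verifies the identity for every $T\in\mathcal{C}(G_2)$ and hence gives unmixedness. The main obstacle is bookkeeping rather than conceptual: one must compute $c_{G_2}(T)$ correctly on each stratum of $\mathcal{C}(G_2)$ and handle the degenerate singleton components $n_i=1$ carefully, so that the correspondence between the single-index identities and condition (a) is exact.
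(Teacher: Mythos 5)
Your proposal is correct and follows essentially the same route as the paper: both reduce unmixedness of $J_{G_1,G_2}$ to the numerical criterion of Proposition~\ref{unmixed3}, use Proposition~\ref{one connected} to describe $\mathcal{C}(G_2)$ via its two strata, and translate the identity $(c_{G_2}(T)-1)(m-1)=|T|$ into conditions (a)--(d) by the same specializations ($T=[t]$ for (c), $T=[t]\cup T_j$ for (a), $T=T_0\cup\bigcup_i[n_i]$ for (d)) and the same summation argument for the converse. The paper's proof is just a linearized version of your stratum-by-stratum bookkeeping, including the identical treatment of the degenerate case $n_i=1$.
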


\begin{proof}
Suppose that $J_{G_1,G_2}$ is unmixed. So, $G_1$ is complete, by Proposition~\ref{unmixed3}. Let $T_1:=[t]$. By Proposition~\ref{one connected}, $T_1\in \mathcal{C}(G_2)$. Since $J_{G_1,G_2}$ is  unmixed, we have $(c_{G_2}(T_1)-1)(m-1)=|T_1|$, by Proposition~\ref{unmixed3}. Thus, $(r-1)(m-1)=t$, because $|T_1|=t$ and $c_{G_2}(T_1)=r$. Let $\emptyset\neq T\in \mathcal{C}(H)$. Then, $T':=T\cup (\bigcup_{i=1}^r[n_i])\in \mathcal{C}(G_2)$, where $|T'|=|T|+\sum_{i=1}^r n_i$ and $c_{G_2}(T')=c_{H}(T)$. So that by Proposition~\ref{unmixed3}, part (d) also follows, since $J_{G_1,G_2}$ is unmixed. Now, let $1\leq i\leq r$. If $n_i=1$, then $J_{G_1,H_i}=(0)$. Suppose that $m\leq n_i$. We show that $J_{G_1,H_i}$ is unmixed. Let
$T_i\in \mathcal{C}(H_i)$ and $T:=T_i\cup [t]$. Then $T\in \mathcal{C}(G_2)$, and we have $|T|=|T_i|+t$ and $c_{G_2}(T)=c_{H_i}(T_i)+r-1$. So, we have
$$(c_{H_i}(T_i)-1)(m-1)=(c_{G_2}(T)-1)(m-1)-(r-1)(m-1)=|T|-t=|T_i|,$$
by unmixedness of $J_{G_1,G_2}$ and part (c). Thus, Proposition~\ref{unmixed3} implies that $J_{G_1,H_i}$ is unmixed. \\
\indent Conversely, let $\emptyset\neq T\in \mathcal{C}(G_2)$. By Proposition~\ref{one connected}, we should consider two cases for $T$. If $T=T'\cup (\bigcup_{i=1}^r[n_i])$, where $\emptyset\neq T'\in \mathcal{C}(H)$, then $|T|=|T'|+\sum_{i=1}^r n_i$ and $c_{G_2}(T)=c_{H}(T')$. So that by part (d), we have $(c_{G_2}(T)-1)(m-1)=|T|$. If $T=[t]\cup (\bigcup_{i=1}^r T_i)$, where $T_i\in \mathcal{C}(H_i)$, for $i=1,\ldots,r$, then $|T|=t+\sum_{i=1}^{r}|T_i|$ and $c_{G_2}(T)=\sum_{i=1}^{r}c_{H_i}(T_i)$. On the other hand, we have $(c_{H_i}(T_i)-1)(m-1)=|T_i|$, for~$i=1,\ldots,r$, because $J_{G_1,H_i}$ is unmixed for all $i=1,\ldots,r$. Thus, we get
$$(c_{G_2}(T)-1)(m-1)=\sum_{i=1}^{r}(c_{H_i}(T_i)-1)(m-1)+(r-1)(m-1)=\sum_{i=1}^{r}|T_i|+t=|T|.$$
These two cases, together with completeness of $G_1$, imply that $J_{G_1,G_2}$ is unmixed, by Proposition~\ref{unmixed3}.
\end{proof}

\begin{rem}\label{omitting part (d)}
{\em Note that in Theorem~\ref{one connected-unmixed2}, when $H$ is a complete graph, then the condition mentioned in part (d) is omitted, since $\mathcal{C}(H)=\{\emptyset\}$. }
\end{rem}

\begin{rem}\label{n_i=1}
{\em In Theorem~\ref{one connected-unmixed2}, when there exists some $n_i\geq m$, the assumption of completeness of $G_1$ in (b), could be omitted. For $J_{G_1,H_i}$ is unmixed and hence $G_1$ is complete, by Proposition~\ref{unmixed3}. But, when for each $i=1,\ldots,r$, $n_i=1$, it is necessary to assume that $G_1$ is complete, since otherwise $J_{G_1,G_2}$ might not be unmixed. }
\end{rem}

\begin{cor}\label{one connected-m=2}
Let $H,H_1,\ldots,H_r$ be connected graphs on $[t],[n_1],\ldots,[n_r]$, respectively,
where $r\geq 2$, and assume that $G:=H*(\bigsqcup_{i=1}^r H_i)$.
Then $J_G$ is unmixed if and only if $H$ is complete, $r=t+1$ and $J_{H_i}$ is unmixed for $i=1,\ldots,r$.
\end{cor}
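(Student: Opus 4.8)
The plan is to derive Corollary~\ref{one connected-m=2} as the case $m=2$ of Theorem~\ref{one connected-unmixed2}, exactly as Corollary~\ref{both connected-m=2} was obtained from Theorem~\ref{both connected-unmixed}. First I would translate the hypotheses and conclusion of that theorem at $m=2$. A connected graph on two vertices must be $K_2$, so $G_1=K_2$; thus $G_1$ is automatically complete, and, as noted in the introduction, $J_{G_1,G_2}\cong J_{G_2}=J_G$ and $J_{G_1,H_i}\cong J_{H_i}$ for each $i$. The running hypothesis ``$n_i=1$ or $n_i\geq m$'' becomes ``$n_i=1$ or $n_i\geq 2$'', which holds for every $i$, and $2\le m\le t+\sum_i n_i$ holds trivially; so Theorem~\ref{one connected-unmixed2} applies with no extra assumptions. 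Evaluating its four conditions at $m=2$: (a) becomes ``$J_{H_i}$ is unmixed for all $i$'', (b) is automatic, (c) becomes $t=(r-1)(m-1)=r-1$, i.e.\ $r=t+1$, and (d) becomes the requirement that $c_H(T)-1=|T|+\sum_{i=1}^r n_i$ for every $\emptyset\neq T\in\mathcal{C}(H)$.

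Comparing with the corollary, everything matches except that (d) must be shown to be equivalent, given (c), to the statement ``$H$ is complete''. One implication is free: if $H$ is complete then $\mathcal{C}(H)=\{\emptyset\}$ and (d) is vacuous, as in Remark~\ref{omitting part (d)}. The content is the converse, and this is the step I expect to carry the weight of the argument, even though it reduces to a one-line counting estimate.

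For the converse I would argue by contradiction. Suppose $H$ is not complete, so there is a nonempty $T\in\mathcal{C}(H)$. Deleting the $|T|$ vertices of $T$ from the $t$-vertex graph $H$ leaves $t-|T|$ vertices spread over the $c_H(T)$ (nonempty) components of $H_{[t]\setminus T}$, giving the trivial bound $c_H(T)\le t-|T|$. On the other hand, (d) together with (c) and the inequality $\sum_{i=1}^r n_i\ge r=t+1$ (each $n_i\ge 1$) yields $c_H(T)=|T|+\sum_i n_i+1\ge |T|+t+2$. These combine to $|T|+t+2\le t-|T|$, i.e.\ $2|T|\le -2$, which is impossible since $|T|\ge 1$. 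Hence no nonempty $T$ lies in $\mathcal{C}(H)$, so $\mathcal{C}(H)=\{\emptyset\}$ and $H$ is complete.

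It then remains to assemble the equivalence. If $J_G$ is unmixed, the theorem at $m=2$ gives (a), (c), (d), hence ``$J_{H_i}$ unmixed'', ``$r=t+1$'', and, via the estimate above, ``$H$ complete''. Conversely, if $H$ is complete, $r=t+1$, and every $J_{H_i}$ is unmixed, then (a), (b), (c) hold while (d) is vacuous, so the theorem gives that $J_G$ is unmixed. The only place where care is needed is recognizing that, once $r=t+1$ is imposed, condition (d) can never be met by a nonempty cut set and therefore collapses precisely to the completeness of $H$.
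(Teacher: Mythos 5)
Your proposal is correct and follows essentially the same route as the paper: specialize Theorem~\ref{one connected-unmixed2} to $m=2$ and observe that condition (d), combined with $r=t+1$ and the bound $\sum_i n_i\ge r$, would force $c_H(T)$ to exceed the number of vertices available in $H_{[t]\setminus T}$, so no nonempty $T\in\mathcal{C}(H)$ can exist and (d) collapses to completeness of $H$. Your counting estimate $2|T|\le -2$ is just a slightly more explicit form of the paper's one-line contradiction $c_H(T)\ge t+3$ versus $H$ having only $t$ vertices.
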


\begin{proof}
Put $m=2$ in Theorem~\ref{one connected-unmixed2}. Now, it is enough to note that for every $\emptyset\neq T\in \mathcal{C}(H)$, $c_H(T)=|T|+1+\sum_{i=1}^r n_i\geq r+2=t+3$, which is a contradiction, since $H$ has $t$ vertices.
\end{proof}

If $t=1$, $H$ is an isolated vertex, and hence $G_2$, in the previous corollary, is just a cone. So, we obtain \cite[Lemma~3.4 and Corollary~3.7]{RR}:

\begin{cor}\label{one connected-cone}
Let $H_1,\ldots,H_r$ be connected graphs on $[n_1],\ldots,[n_r]$, respectively,
where $r\geq 2$, and assume that $G=\mathrm{cone}(v,\bigsqcup_{i=1}^r H_i)$.
Then $J_{G}$ is unmixed if and only if $J_{H_i}$ is unmixed for $i=1,\ldots,r$, and $r=2$.
\end{cor}

The following theorem is on the Cohen-Macaulay property:

\begin{thm}\label{one connected-CM}
Let $G_1,H_1,\ldots,H_r$ be connected graphs on $[t],[n_1],\ldots,[n_r]$, respectively,
where $r\geq 2$. Suppose that $G_2:=\bigsqcup_{i=1}^r H_i$ and $S/J_{G_2}$ is Cohen-Macaulay.
Then $S/J_{G_1*G_2}$ is Cohen-Macaulay if and only if $r=2$ and $t=1$.
\end{thm}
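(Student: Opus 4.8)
The plan is to reduce both implications to the situation where $G_1$ is complete, and then to compute $\mathrm{depth}(S/J_{G_1*G_2})$ \emph{exactly} by splitting $J_{G_1*G_2}$ along the cut set $[t]$ and invoking the depth lemma in its sharp form. For the implication that Cohen-Macaulayness forces $r=2,t=1$, I would first use that a Cohen-Macaulay ring is unmixed, so Corollary~\ref{one connected-m=2} applies and shows that $G_1$ is complete (and incidentally that $r=t+1$). For the reverse implication the hypothesis $t=1$ makes $G_1=K_1$ complete automatically. Hence in both cases $G_1$ may be taken to be the complete graph $K_t$, and Proposition~\ref{one connected}(a) collapses, via $\overline{\mathcal{C}}(G_1)=\emptyset$, to $\mathcal{C}(G_1*G_2)=\{\emptyset\}\cup\big((\bigcirc_{i=1}^r\mathcal{C}(H_i))\circ\{[t]\}\big)$, as recorded in Remark~\ref{G_1 complete}. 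In particular every nonempty $T\in\mathcal{C}(G_1*G_2)$ contains $[t]$.

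Writing $G:=G_1*G_2$ and $n:=t+\sum_{i=1}^r n_i$, I would decompose $J_G=Q\cap Q'$, where $Q$ collects the $P_T(G)$ with $[t]\nsubseteq T$ and $Q'$ collects those with $[t]\subseteq T$. By the previous paragraph $Q=P_\emptyset(G)=J_{K_n}$, while grouping each admissible $T$ as $[t]\cup T'$ with $T'\in\mathcal{C}(G_2)$ and using $P_{[t]\cup T'}(G)=(x_i,y_i:i\in[t])+P_{T'}(G_2)$ yields $Q'=(x_i,y_i:i\in[t])+J_{G_2}$. Since $J_{G_2}\subseteq J_{K_n}$, one gets $Q+Q'=(x_i,y_i:i\in[t])+J_{K_n}$. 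Setting $S_A=K[x_i,y_i:i\in[n]\setminus[t]]$, I would then identify $S/Q=S/J_{K_n}$, $S/Q'\cong S_A/J_{G_2}$ and $S/(Q+Q')\cong S_A/J_{K_{n-t}}$. The first and third are binomial edge ideals of complete graphs, hence Cohen-Macaulay of depths $n+1$ and $(n-t)+1$; the middle quotient is Cohen-Macaulay of depth $d_2:=\mathrm{dim}(S_A/J_{G_2})$ by the standing hypothesis on $G_2$, and since $S/J_{G_2}$ is then unmixed with $P_\emptyset(G_2)$ a minimal prime of dimension $(n-t)+r$, one has $d_2=(n-t)+r$.

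The next step is to feed the short exact sequence $0\to S/J_G\to S/Q\oplus S/Q'\to S/(Q+Q')\to 0$ into the depth lemma \cite[Proposition~1.2.9]{BH}. Because $r\geq 2$ and $t\geq 1$, the depth $\min\{n+1,d_2\}$ of the middle term strictly exceeds the depth $(n-t)+1$ of $S/(Q+Q')$, so the sharp form of the depth lemma gives the exact value $\mathrm{depth}(S/J_G)=(n-t)+2$. On the other hand, Proposition~\ref{one connected}(c) (together with Remark~\ref{G_1 complete}) gives $\mathrm{dim}(S/J_G)=\mathrm{max}\{(n-t)+r,\,n+1\}$. Comparing, $S/J_G$ is Cohen-Macaulay if and only if $(n-t)+2=\mathrm{max}\{(n-t)+r,\,n+1\}$, and since $r\geq 2$ and $t\geq 1$ this equality holds precisely when $r=2$ and $t=1$, which is exactly the claimed equivalence.

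I expect the main obstacle to be extracting the \emph{exact} depth rather than a mere inequality. Concretely, one must verify that the two auxiliary quotients really are $S_A/J_{K_n}$ and $S_A/J_{K_{n-t}}$ (so that their depths are genuinely $n+1$ and $(n-t)+1$), and then confirm the strict inequality $\min\{n+1,d_2\}>(n-t)+1$ that licenses the sharp conclusion $\mathrm{depth}(S/J_G)=(n-t)+2$; without strictness one would only obtain a lower bound and could not rule out Cohen-Macaulayness when $t\geq 2$. The remaining delicate point is the identification $Q'=(x_i,y_i:i\in[t])+J_{G_2}$, which is precisely where the Cohen-Macaulay hypothesis on $G_2$ is used to guarantee that $S/Q'$ is Cohen-Macaulay of depth $d_2$.
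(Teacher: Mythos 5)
Your proof is correct and is essentially the paper's own argument: after using unmixedness (Corollary~\ref{one connected-m=2}) to force $G_1$ complete and $r=t+1$, the paper performs exactly your decomposition $J_G=Q\cap Q'$ with $Q=P_\emptyset(G)$ and $Q'=(x_i,y_i:i\in[t])+J_{G_2}$, identifies $S/(Q+Q')$ with the quotient by the binomial edge ideal of the complete graph on $[n]\setminus[t]$, applies the sharp form of the depth lemma to the same short exact sequence, and compares the resulting depth with $\dim S/J_G=n+1$ obtained from Proposition~\ref{one connected}. The only divergence is that the paper settles the implication ($r=2$ and $t=1$) $\Rightarrow$ Cohen--Macaulay by citing \cite[Theorem~3.8]{RR}, while you rederive it from the same depth computation specialized to $t=1$, a harmless and slightly more self-contained variant.
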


\begin{proof}
If $r=2$ and $t=1$, then the result follows by \cite[Theorem~3.8]{RR}. Now, let $G:=G_1*G_2$ and $n=t+\sum_{i=1}^rn_i$, and assume that $S/J_G$ is Cohen-Macaulay. So, $J_G$ is unmixed, and hence $r=t+1$ and $G_1$ is complete, by Corollary~\ref{one connected-m=2}. By Remark~\ref{G_1 complete},
$\mathcal{C}(G)=\{\emptyset\}\cup \{[t]\cup (\bigcup_{i=1}^r T_i):T_i\in \mathcal{C}(H_i),\mathrm{for}~i=1,\ldots,r\}$. So, we have $J_G=Q\cap Q'$, where
\begin{equation}
Q=\bigcap_{\substack{
T\in \mathcal{C}(G) \\
[t]\nsubseteq T
}}
P_T(G)=P_{\emptyset}(G)
\nonumber
\end{equation}
and
\begin{equation}
Q'=\bigcap_{\substack{
T\in \mathcal{C}(G) \\
[t]\subseteq T
}}
P_T(G).
\nonumber
\end{equation}
Now, consider the following short exact sequence
$$0\rightarrow S/J_G\rightarrow S/Q\oplus S/Q'\rightarrow S/(Q+Q')\rightarrow 0.$$
Because $Q=P_{\emptyset}(G)$ is a determinantal ideal, we have $\mathrm{depth}\hspace{0.35mm}S/Q=n+1$.
For every $\emptyset\neq T \in \mathcal{C}(G)$, we have $[t]\subseteq T$ and $P_T(G)=(x_i,y_i~:~i\in [t])+P_{T\setminus [t]}(G_2)$. Thus, $Q'=(x_i,y_i~:~i\in [t])+J_{G_2}$. Let $S_1=K\big{[}x_i,y_i:i\notin [t]\big{]}$. Then, $S/Q'\cong S_1/J_{G_2}$ is Cohen-Macaulay, and hence $\mathrm{depth}\hspace{0.35mm}S/Q'=\mathrm{dim}\hspace{0.35mm}S/Q'=(n-t)+r$, by \cite[Corollary~3.4]{HHHKR}. So, $\mathrm{depth}\hspace{0.35mm}S/Q'=n+1$, since $r=t+1$. Thus, $\mathrm{depth}\hspace{0.35mm}(S/Q\oplus S/Q')=n+1$.
One has $Q+Q'=(x_i,y_i~:~i\in [t])+P_{\emptyset}(G)$. So that $S/(Q+Q')\cong S_1/J_{\widetilde{G_2}}$, where $\widetilde{G_2}$ is the complete graph on
$[n]\setminus [t]=\bigcup_{i=1}^{r}[n_i]$. Therefore, $\mathrm{depth}\hspace{0.35mm}S_1/(Q+Q')=\mathrm{depth}\hspace{0.35mm}S_1/J_{\widetilde{G_2}}=(n-t)+1$. So, $\mathrm{depth}\hspace{0.35mm}S_1/(Q+Q')=n-r+2$,
since $r=t+1$. So, by using the depth lemma, we have $\mathrm{depth}\hspace{0.35mm}S/J_{G}=n-r+3$, because $r\geq 2$. On the other hand,
$S/J_{G_2}$ is Cohen-Macaulay, and hence $\mathrm{dim}\hspace{0.35mm}S/J_{G}=n+1$, by Proposition~\ref{one connected} and \cite[Corollary~3.4]{HHHKR}. But,
$S/J_{G}$ is Cohen-Macaulay, so $\mathrm{depth}\hspace{0.35mm}S/J_{G}=n+1$, and hence $r=2$ and $t=1$, as desired.
\end{proof}

We denote the path over $n$ vertices, by $P_n$. In addition, by $G^c$, we mean the complementary graph of the graph $G$.

\begin{exam}\label{fan}
{\em (a) Let $r,t\geq 1$. Suppose that $F_{r,t}$ is a \textit{fan graph}, which is $K_r^c*P_t$. By Proposition~\ref{one connected}, we have
$\mathrm{dim}\hspace{0.35mm}S/J_{F_{r,t}}=\mathrm{max}\{2r,r+t+1\}$, since $J_{P_t}$ is unmixed and hence $\mathrm{dim}\hspace{0.35mm}S/P_T(P_t)=t+1$, for all $\emptyset \neq T \in \mathcal{C}(P_t)$. Moreover, by Theorem~\ref{one connected-unmixed2}, $P_3$ and $F_{3,2}$ (which is also a generalized block graph) are the only fan graphs whose binomial edge ideals are unmixed. Also, the only fan graph, whose binomial edge ideal is Cohen-Macaulay, is $F_{2,1}=P_3$.

(b) Let $l,s\geq 2$, and $G:=K_{l}^c*K_{s-1}$, the complete $s$-partite graph whose $s-1$ parts consist of only one vertex, and one part consists of $l$ vertices. By Proposition~\ref{one connected}, we have $\mathrm{dim}\hspace{0.35mm}S/J_{G}=\mathrm{max}\{2l,l+s\}$. Also, by Theorem~\ref{one connected-unmixed2}, $J_{G}$ is unmixed if and only if $l=s$. In this case, we get $\mathrm{dim}\hspace{0.35mm}S/J_{G}=2s$.  }
\end{exam}

\subsection{Join of two disconnected graphs}\label{Join of two disconnected graphs}

Now, we consider the join of two disconnected graphs.

\begin{prop}\label{both disconnected 2}
Suppose that $G_1=\bigsqcup_{i=1}^r G_{1i}$ and $G_2=\bigsqcup_{i=1}^s G_{2i}$ are two graphs on disjoint sets of vertices
$[n_1]=\bigcup_{i=1}^r [n_{1i}]$ and $[n_2]=\bigcup_{i=1}^s [n_{2i}]$,
respectively, where $r,s\geq 2$. Then we have \\\\
{\em{(a)}} $\mathcal{C}(G_1*G_2)=\{\emptyset\}\cup \big{(}(\bigcirc_{i=1}^{r}\mathcal{C}(G_{1i}))\circ \{[n_2]\} \big{)}\cup \big{(}(\bigcirc_{i=1}^{s}\mathcal{C}(G_{2i}))\circ \{[n_1]\} \big{)}.$ \\\\
{\em{(b)}} $\mathrm{height}\hspace{0.35mm}J_{G_1*G_2}=\mathrm{min}\{2n_2+\sum_{i=1}^r
\mathrm{height}\hspace{0.35mm}J_{G_{1i}},2n_1+\sum_{j=1}^s
\mathrm{height}\hspace{0.35mm}J_{G_{2j}},n_1+n_2-1\}$. \\\\
{\em{(c)}} $\mathrm{dim}\hspace{0.35mm}S/J_{G_1*G_2}=\mathrm{max}\{\sum_{i=1}^r \mathrm{dim}\hspace{0.35mm}S_{1i}/J_{G_{1i}},
\sum_{i=1}^s \mathrm{dim}\hspace{0.35mm}S_{2i}/J_{G_{2i}},n_1+n_2+1\}$,
where $S_{1i}=K\big{[}x_j,y_j:j\in [n_{1i}]\big{]}$ for $i=1,\ldots,r$, and $S_{2i}=K\big{[}x_j,y_j:j\in [n_{2i}]\big{]}$ for $i=1,\ldots,s$.
\end{prop}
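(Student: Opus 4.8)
The plan is to mirror the structure of the proofs of Propositions~\ref{both connected} and~\ref{one connected}, adapting the cut point analysis to the case where both factors are disconnected. First I would prove part~(a), the description of $\mathcal{C}(G_1*G_2)$, and then derive parts~(b) and~(c) as formal consequences exactly as was done before: once the combinatorial set $\mathcal{C}(G_1*G_2)$ is known, part~(b) follows from the height formula $\mathrm{height}\hspace{0.35mm}P_T(G)=n+|T|-c(T)$ together with the observation that for each type of $T$ the quantity $|T|-c(T)$ splits as a sum over the components, and part~(c) follows immediately via $\mathrm{dim}\hspace{0.35mm}S/J_G=2n-\mathrm{height}\hspace{0.35mm}J_G$ (here $n=n_1+n_2$). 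So the real work is entirely in part~(a).

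For the forward inclusion in~(a), I would take a set $T$ of one of the three claimed forms and verify it has the cut point property. Consider first $T=(\bigcup_{i=1}^r T_i)\cup [n_2]$ with $T_i\in\mathcal{C}(G_{1i})$ and not all empty. For a vertex $j\in T_i\subseteq[n_1]$, deleting $T$ leaves $G_{([n]\setminus T)\cup\{j\}}$ equal to the disjoint union ${G_{1i}}_{([n_{1i}]\setminus T_i)\cup\{j\}}\sqcup\bigsqcup_{l\neq i}{G_{1l}}_{([n_{1l}]\setminus T_l)}$ (the whole $G_2$ part has been removed with $[n_2]$, so $j$ connects only within its own $G_{1i}$-component), hence $j$ is a cut point because $T_i\in\mathcal{C}(G_{1i})$. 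For a vertex $j\in[n_2]$, the graph $G_{([n]\setminus T)\cup\{j\}}$ is $j$ joined to the disconnected graph ${G_1}_{[n_1]\setminus\bigcup T_i}$, so $j$ is a cut point provided that residual graph is disconnected; since $G_1$ itself is already disconnected ($r\geq 2$) this holds. The set $T=(\bigcup_{i=1}^s T_i')\cup[n_1]$ is handled symmetrically, so the claimed sets all lie in $\mathcal{C}(G_1*G_2)$.

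For the reverse inclusion, take $\emptyset\neq T\in\mathcal{C}(G_1*G_2)$. The key structural point is that because every vertex of $[n_1]$ is adjacent to every vertex of $[n_2]$ in the join, if $T$ omits at least one vertex of $[n_1]$ and at least one vertex of $[n_2]$ then $G_{[n]\setminus T}$ is connected and $T$ has no cut point, forcing $T=\emptyset$. Hence either $[n_1]\subseteq T$ or $[n_2]\subseteq T$. Suppose $[n_2]\subseteq T$ and write $T=[n_2]\cup(\bigcup_{i=1}^r T_i)$ with $T_i\subseteq[n_{1i}]$; then for each nonempty $T_i$ and each $j\in T_i$ the cut point condition for $T$ in $G$ restricts to the cut point condition for $T_i$ in $G_{1i}$ (the components $G_{1l}$, $l\neq i$, are separated from $j$ once $[n_2]$ is removed), giving $T_i\in\mathcal{C}(G_{1i})$ and thus $T\in(\bigcirc_{i=1}^r\mathcal{C}(G_{1i}))\circ\{[n_2]\}$; the case $[n_1]\subseteq T$ is symmetric. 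I expect the main obstacle to be the careful bookkeeping in this reverse inclusion: one must rule out that $T$ simultaneously contains $[n_1]$ and $[n_2]$ while lying in neither claimed family (it cannot, since $[n_1]\cup[n_2]=[n]$ means $T=[n]$, which is not in $\mathcal{C}(G)$ because $G_1*G_2$ is connected), and one must handle the boundary behaviour correctly when some $T_i=\emptyset$, using the convention that $\emptyset\in\mathcal{C}(G_{1i})$ so that partial cut sets still assemble into an element of the join. The disjointness of the vertex sets of the various components is what makes the restriction of the cut point property to each factor clean, so I would emphasize that point explicitly.
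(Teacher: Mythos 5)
Your proposal is correct and takes essentially the same route as the paper's proof: part~(a) by verifying the cut point property for sets of the stated forms and, conversely, showing any nonempty $T\in\mathcal{C}(G_1*G_2)$ must contain $[n_1]$ or $[n_2]$ and then restricting the cut point condition to the individual components $G_{1i}$ or $G_{2i}$, with parts~(b) and~(c) then following formally from $\mathrm{height}\hspace{0.35mm}P_T(G)=n+|T|-c(T)$ and $\mathrm{dim}\hspace{0.35mm}S/J_G=2n-\mathrm{height}\hspace{0.35mm}J_G$, exactly as in Proposition~\ref{both connected}. One trivial quibble: $[n]\notin\mathcal{C}(G_1*G_2)$ because a vertex added back to the empty graph is isolated and hence never a cut point, not because $G_1*G_2$ is connected; but this side case needs no separate treatment in the argument anyway.
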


\begin{proof}
\indent (a) Let $G:=G_1*G_2$ and $T\in (\bigcirc_{i=1}^{r}\mathcal{C}(G_{1i}))\circ \{[n_2]\}$. So, $T=[n_2]\cup (\bigcup_{i=1}^r T_{1i})$, where $T_{1i}\in \mathcal{C}(G_{1i})$, for $i=1,\ldots,r$. We show that $T$ has cut point property. Let $j\in T$. If $j\in T_{1i}$, for some $i=1,\ldots,r$, then $G_{([n]\setminus T)\cup \{j\}}={G_{1i}}_{([n_{1i}]\setminus T_{1i})\cup \{j\}}\sqcup (\bigsqcup_{l=1,l\neq i}^r {G_{1l}}_{([n_{1l}]\setminus T_{1l})})$. In this case, $j$ is a cut point of ${G_{1i}}_{([n_{1i}]\setminus T_{1i})\cup \{j\}}$, since $T_{1i}\in \mathcal{C}(G_{1i})$. So that $j$ is also a cut point of $G_{([n]\setminus T)\cup \{j\}}$. If $j\in [n_2]$, then $G_{([n]\setminus T)\cup \{j\}}=j*\bigsqcup_{i=1}^r {G_{1i}}_{([n_{1i}]\setminus T_{1i})}$. So, $j$ is a cut point of $G_{([n]\setminus T)\cup \{j\}}$, since $G_{([n]\setminus T)}$ is disconnected. Thus, in both cases, $T$ has cut point property. If $T\in (\bigcirc_{i=1}^{s}\mathcal{C}(G_{2i}))\circ \{[n_1]\}$, then similarly, we have $T\in \mathcal{C}(G)$.
For the other inclusion, let $\emptyset \neq T\in \mathcal{C}(G)$. If $T$ does not contain $[n_1]$ and $[n_2]$, then $G_{[n]\setminus T}$ is connected, and hence no element $i$ of $T$ is a cut point of $G_{([n]\setminus T)\cup \{i\}}$. So, we have $[n_1]\subseteq T$ or $[n_2]\subseteq T$. Suppose that $[n_1]\subseteq T$. Then, $T=[n_1]\cup (\bigcup_{i=1}^s T_{2i})$, where $T_{2i}\subseteq [n_2]$, for $i=1,\ldots,s$. Let $1\leq i\leq s$. If $T_{2i}=\emptyset$, then, clearly, $T_{2i}\in \mathcal{C}(G_{2i})$. If $T_{2i}\neq \emptyset$, then
each $j\in T_{2i}$, is a cut point of $G_{([n]\setminus T)\cup \{j\}}$, since $T\in \mathcal{C}(G)$. So that $j$ is a cut point of ${G_{2i}}_{([n_{2i}]\setminus T_{2i})\cup \{j\}}$, because $j\in T_{2i}$ and $G_{2i}$'s are on disjoint sets of vertices. Thus, $T_{2i}\in \mathcal{C}(G_{2i})$. Therefore, $T\in (\bigcirc_{i=1}^{s}\mathcal{C}(G_{2i}))\circ \{[n_1]\}$. If $[n_2]\subseteq T$, then similarly we get $T\in (\bigcirc_{i=1}^{r}\mathcal{C}(G_{1i}))\circ \{[n_2]\}$. \\
\indent Using part~(a), parts (b) and (c) follow by similar discussions as in the proof of Proposition~\ref{both connected}.
\end{proof}

The following corollary generalizes \cite[Theorem~1.1, part~(a)]{SZ}, due to Schenzel and Zafar, for complete $t$-partite graphs. Here, we denote by $K_{n_1,\ldots,n_t}$, a complete $t$-partite graph with parts of $n_1,\ldots,n_t$ vertices.

\begin{cor}\label{Schenzel-Zafar}
Let $1\leq n_1\leq \cdots \leq n_t$ be some integers. Then we have $$\mathrm{dim}\hspace{0.35mm}S/J_{K_{n_1,\ldots,n_t}}=\mathrm{max}\big{\{}\sum_{i=1}^tn_i+1,2n_t\big{\}}.$$
\end{cor}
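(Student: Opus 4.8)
The plan is to derive Corollary~\ref{Schenzel-Zafar} as a direct specialization of the dimension formula in Proposition~\ref{both disconnected 2}(c). The complete $t$-partite graph $K_{n_1,\ldots,n_t}$ decomposes as a join in a way that matches the hypotheses of that proposition: writing $K_{n_1,\ldots,n_t}=G_1*G_2$, where $G_1=\overline{K_{n_1}}$ is the edgeless graph on the first part (a disjoint union of $n_1$ isolated vertices) and $G_2=K_{n_2,\ldots,n_t}$ is the complete $(t-1)$-partite graph on the remaining parts. One could also proceed by induction on $t$; I would prefer the inductive route since Proposition~\ref{both disconnected 2} requires both factors to be disconnected ($r,s\geq 2$), which needs a small amount of care in the base cases.

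First I would dispose of the degenerate cases. If $t=1$, then $K_{n_1}$ is a single part with no edges, so $J_{K_{n_1}}=(0)$ and $\dim S/J_{K_{n_1}}=2n_1$, which agrees with $\max\{n_1+1,2n_1\}=2n_1$ since $n_1\geq 1$. If every part is a singleton ($n_1=\cdots=n_t=1$), then $K_{1,\ldots,1}$ is the complete graph $K_t$, whose binomial edge ideal has $\dim S/J_{K_t}=t+1$, matching $\max\{t+1,2\}=t+1$. These anchor the recursion.

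For the main step I would apply Proposition~\ref{both disconnected 2}(c) to the decomposition above, treating $G_1=\overline{K_{n_1}}$ as $\bigsqcup$ of $n_1$ isolated vertices (so $r=n_1$, each $G_{1i}=K_1$) and $G_2=K_{n_2,\ldots,n_t}$ as the join inside which the parts sit. When $G_2$ is itself disconnected, which happens precisely when $G_2$ has more than one part and not all of them are singletons, the proposition gives
\begin{equation}
\dim S/J_{K_{n_1,\ldots,n_t}}=\max\Big\{\sum_{i=1}^{r}\dim S_{1i}/J_{G_{1i}},\ \sum_{j}\dim S_{2j}/J_{G_{2j}},\ n_1+n_2'+1\Big\},\nonumber
\end{equation}
where $n_2'=n_2+\cdots+n_t$. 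Each isolated vertex contributes $\dim S_{1i}/J_{K_1}=2$, so the first sum is $2n_1$; the middle sum reduces to $\dim S/J_{K_{n_2,\ldots,n_t}}$ computed componentwise, which the induction hypothesis evaluates to $\max\{\sum_{i\geq 2}n_i+1,\,2n_t\}$; and the last term is $\sum_{i=1}^t n_i+1$. Taking the overall maximum and using $n_1\leq n_t$ collapses the expression to $\max\{\sum_{i=1}^t n_i+1,\,2n_t\}$, as required. The cases where $G_2$ is connected (one remaining part, or the second factor being a clique) would instead invoke Proposition~\ref{one connected} or Proposition~\ref{both connected}, handled analogously.

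The main obstacle I anticipate is bookkeeping rather than conceptual: one must keep track of which of the three competing terms in each application of the proposition realizes the maximum, since Remark~\ref{G_1 complete} already warns that any of them can dominate depending on the relative sizes of the parts. The ordering hypothesis $n_1\leq\cdots\leq n_t$ is exactly what is needed to show that the only terms surviving the nested maxima are $\sum_i n_i+1$ (from the innermost part being small) and $2n_t$ (from the largest part forming an edgeless factor whose disjoint vertices each contribute $2$). I would therefore organize the argument so that the final maximum is taken once, over all the expanded terms, and then argue by the ordering that every term is bounded above by $\max\{\sum_i n_i+1,2n_t\}$ and that both of these are attained.
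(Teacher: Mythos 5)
Your overall strategy (induction on the number of parts, peeling one part off the join and invoking the dimension formulas of Section~\ref{Join}) is the same as the paper's, but there is a genuine error at the heart of your inductive step. You write $K_{n_1,\ldots,n_t}=K_{n_1}^c * K_{n_2,\ldots,n_t}$ and claim that the second factor $G_2=K_{n_2,\ldots,n_t}$ is disconnected ``precisely when $G_2$ has more than one part and not all of them are singletons''. This is false: a complete multipartite graph with at least two parts is always connected, since any two vertices in different parts are adjacent and any two vertices in the same part have a common neighbor in another part. Consequently, for every $t\geq 3$ the factor $G_2$ is connected, Proposition~\ref{both disconnected 2} (which requires both factors to have at least two components, $r,s\geq 2$) never applies, and your displayed formula --- whose middle term is $\mathrm{dim}\hspace{0.35mm}S/J_{K_{n_2,\ldots,n_t}}$, the quantity to which you apply the induction hypothesis --- is not justified. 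The situation you dismiss as exceptional (``the cases where $G_2$ is connected'') is in fact the only situation occurring in the inductive step.

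The error can be repaired, but not merely ``analogously'': with your decomposition one must use Proposition~\ref{one connected} with connected factor $K_{n_2,\ldots,n_t}$ and disconnected factor $K_{n_1}^c$ (assuming $n_1\geq 2$; the case $n_1=1$ needs Proposition~\ref{both connected} or the cone results instead). That proposition's middle term is $\mathrm{max}\{\mathrm{dim}\hspace{0.35mm}S'/P_T(G_2): T\in\overline{\mathcal{C}}(G_2)\}$, not $\mathrm{dim}\hspace{0.35mm}S'/J_{G_2}$, so the induction hypothesis only yields the upper bound $\mathrm{dim}\hspace{0.35mm}S/J_{K_{n_1,\ldots,n_t}}\leq\mathrm{max}\{\sum_{i=1}^t n_i+1,\,2n_t\}$; to get equality you must additionally exhibit a minimal prime of dimension $2n_t$ (for instance $P_{T^*}$ where $T^*$ is the complement of the largest part, which lies in $\mathcal{C}$ when $n_t\geq 2$), because with your choice of decomposition only $2n_1$, not $2n_t$, appears as an explicit term in the maximum. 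The paper sidesteps both problems by peeling off the \emph{largest} part instead: then the connected factor is $K_{n_1,\ldots,n_{t-1}}$, the disconnected factor is the set of $n_t\geq 2$ isolated vertices, the term $2n_t$ appears directly in the maximum given by Proposition~\ref{one connected}, and the induction hypothesis is needed only to bound the remaining terms from above.
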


\begin{proof}
We use induction on $t$, the number of parts of a complete $t$-partite graph. If $t=2$, then we have a complete bipartite graph $K_{n_1,n_2}$, with $n_1\leq n_2$. If $n_1=n_2=1$, the result is obvious. If $n_1=1$ and $n_2\geq 2$, then we have $\mathrm{dim}\hspace{0.35mm}S/J_{K_{n_1,n_2}}=2(n-1)$, which implies the result in this case. If $2\leq n_1\leq n_2$, then, by Proposition~\ref{both disconnected 2}, we have $\mathrm{dim}\hspace{0.35mm}S/J_{K_{n_1,n_2}}=\mathrm{max}\{2n_1,2n_2,n_1+n_2+1\}=\mathrm{max}\{2n_2,n_1+n_2+1\}$, which yields the result in this case. Now, let $t>2$ and $G=K_{n_1,\ldots,n_t}$ be the complete $t$-partite graph with $1\leq n_1\leq \cdots \leq n_t$. If $n_1=\cdots=n_t=1$, then $G$ is complete and hence the result is obvious. Now, suppose that $n_t\geq 2$. Then $G$ is the join of a complete $(t-1)$-partite graph on $\bigcup_{i=1}^{t-1}[n_i]$, say $G_1$, and $n_t$ isolated vertices. Thus, by Proposition~\ref{one connected}, we have $\mathrm{dim}\hspace{0.35mm}S/J_G=\mathrm{max}\{2n_t,\mathrm{dim}\hspace{0.35mm}S'/P_T(G_1),\sum_{i=1}^tn_i+1: T\in \overline{\mathcal{C}}(G_1)\}$, where $S'=K\big{[}x_i,y_i:i\in \bigcup_{i=1}^{t-1}[n_i]\big{]}$. On the other hand, $\mathrm{dim}\hspace{0.35mm}S'/P_T(G_1)\leq \mathrm{dim}\hspace{0.35mm}S'/J_{G_1}=\mathrm{max}\{\sum_{i=1}^{t-1}n_i+1,2n_{t-1}\}\leq \mathrm{max}\{\sum_{i=1}^{t}n_i+1,2n_t\}$, by the induction hypothesis. Thus, we have $\mathrm{dim}\hspace{0.35mm}S/J_G=\mathrm{max}\{2n_t,\sum_{i=1}^tn_i+1\}$, as desired.
\end{proof}

\begin{cor}\label{both disconnected-unmixed}
Let $G_1$ be a connected graph on $m\geq 2$ vertices, and assume that $H_1=\bigsqcup_{i=1}^r H_{1i}$ and $H_2=\bigsqcup_{i=1}^s H_{2i}$ are two graphs on
$[n_1]=\bigcup_{i=1}^r [n_{1i}]$ and $[n_2]=\bigcup_{i=1}^s [n_{2i}]$,
respectively, where $r,s\geq 2$. If $G_2:=H_1*H_2$, and $n_{1i},n_{2j}\geq m$ for all $i=1,\ldots,r$ and $j=1,\ldots,s$, then $J_{G_1,G_2}$ is
never unmixed, and hence Cohen-Macaulay.
\end{cor}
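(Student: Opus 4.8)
The plan is to combine the structural description of $\mathcal{C}(G_2)$ from Proposition~\ref{both disconnected 2}(a) with the numerical criterion of Proposition~\ref{unmixed3}, and to derive a contradiction. First I would assume, for the sake of argument, that $J_{G_1,G_2}$ is unmixed. By Proposition~\ref{unmixed3}, this forces $G_1$ to be complete and requires that $(c_{G_2}(T)-1)(m-1)=|T|$ hold for \emph{every} $\emptyset\neq T\in \mathcal{C}(G_2)$. The strategy is to exhibit a single, conveniently chosen $T\in \mathcal{C}(G_2)$ for which this equality fails numerically, thereby contradicting unmixedness.

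The natural candidates are the two ``extreme'' elements of $\mathcal{C}(G_2)$ coming from part~(a) of Proposition~\ref{both disconnected 2}, namely $T=[n_2]$ (which lies in $(\bigcirc_{i=1}^{r}\mathcal{C}(G_{1i}))\circ \{[n_2]\}$ by taking each $T_{1i}=\emptyset$) and $T=[n_1]$ (symmetrically). For $T=[n_2]$ one computes directly that $|T|=n_2$ and $c_{G_2}(T)=r$, since deleting all of $[n_2]$ from $G_2=H_1*H_2$ leaves exactly the $r$ components of $H_1$. The required equality would then read $(r-1)(m-1)=n_2$. Likewise, using $T=[n_1]$ gives $(s-1)(m-1)=n_1$. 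The second key step is to turn these two equalities into a contradiction using the hypothesis $n_{1i},n_{2j}\geq m$ together with $r,s\geq 2$: summing the component sizes gives $n_2=\sum_{j=1}^{s}n_{2j}\geq sm$, and so $(r-1)(m-1)=n_2\geq sm$, which after rearranging is incompatible with the analogous bound from the other equality once both are imposed simultaneously. I would carry out the short arithmetic to show $(r-1)(m-1)\geq sm$ and $(s-1)(m-1)\geq rm$ cannot both hold when $m\geq 2$ and $r,s\geq 2$, for instance by adding them and comparing coefficients, obtaining $rm+sm\leq (r+s-2)(m-1)=rm+sm-r-s-2m+2$, i.e.\ $0\leq -r-s-2m+2$, which is absurd for the allowed ranges.

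Having derived the contradiction, unmixedness fails; and since Cohen-Macaulayness implies unmixedness (the implication (a)~$\Rightarrow$~(b) noted in the proof of Theorem~\ref{both connected-unmixed}), $S/J_{G_1,G_2}$ cannot be Cohen-Macaulay either, giving the ``hence'' clause for free.

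The step I expect to require the most care is the bookkeeping of $|T|$ and $c_{G_2}(T)$ for the chosen $T$, and making sure the chosen $T$ genuinely lies in $\mathcal{C}(G_2)$ rather than merely in some larger index set. The cut-point count $c_{G_2}([n_2])=r$ must be justified from the join structure (the join edges between the two sides disappear once one side is deleted), and one must check that $[n_2]$ really arises as an element of $(\bigcirc_{i=1}^{r}\mathcal{C}(G_{1i}))\circ \{[n_2]\}$, which it does because $\emptyset\in \mathcal{C}(G_{1i})$ for each $i$ and the empty sets join to $[n_2]$. The arithmetic contradiction itself is routine once the two equalities are in hand; the only subtlety is ensuring the hypothesis $n_{1i},n_{2j}\geq m$ is invoked correctly to bound $n_1$ and $n_2$ from below, which is precisely where the restriction on component sizes is essential.
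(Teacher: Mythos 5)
Your proposal is correct and follows essentially the same route as the paper: both take $T=[n_1]$ and $T=[n_2]$ (which lie in $\mathcal{C}(G_2)$ by Proposition~\ref{both disconnected 2}(a)), obtain $(s-1)(m-1)=n_1$ and $(r-1)(m-1)=n_2$ from Proposition~\ref{unmixed3}, and contradict these with the bounds $n_1\geq rm$, $n_2\geq sm$ coming from $n_{1i},n_{2j}\geq m$. The only difference is that you spell out the membership check and the final arithmetic, which the paper leaves implicit.
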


\begin{proof}
Let $T_1=[n_1]$ and $T_2=[n_2]$. By Proposition~\ref{both disconnected 2}, $T_1,T_2\in \mathcal{C}(G)$, and also $c_{G_2}(T_1)=s$ and $c_{G_2}(T_2)=r$.
Suppose on the contrary that $J_{G_1,G_2}$ is unmixed. Thus, we have $(s-1)(m-1)=n_1$ and $(r-1)(m-1)=n_2$, by Proposition~\ref{unmixed3}. But it is a contradiction, since $n_{1i},n_{2j}\geq m$, for all $i=1,\ldots,r$ and $j=1,\ldots,s$.
\end{proof}

With the notation of Corollary~\ref{both disconnected-unmixed}, if there are some $n_{1i}$ or $n_{2j}$ which are less than $m$,
then $J_{G_1,G_2}$ might be unmixed or not. For instance, \cite[Proposition~2.3 and Theorem~2.4]{O1}, provide some examples for this purpose.
Also, see the next example. We denote the $n$-cycle by $C_n$.

\begin{center}
\begin{figure}
\hspace{0 cm}
\includegraphics[height=2.7cm,width=2.5cm]{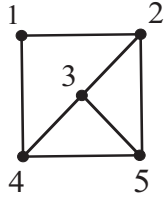}
\caption{\footnotesize{}The join of two graphs $K_2\sqcup K_1$ and $K_1\sqcup K_1$}\hspace{2 cm}
\label{graph}
\end{figure}
\end{center}

\begin{exam}\label{1}
\em{ (a) Let $G_1$ be the complete graph $K_m$. If $H_1$ and $H_2$ are both two isolated vertices, i.e. $K_1\sqcup K_1$,
then we have $n_{11}=n_{12}=n_{21}=n_{22}=1$, and $G_2=H_1*H_2$ is the $4$-cycle, $C_4$. Thus, by \cite[Proposition~4.2]{EHHQ}, $J_{G_1,G_2}$ is
unmixed if and only if $m=3$. \\
\indent (b) Let $G_1$ be the complete graph $K_m$, for some $m\geq 2$. If $H_1=K_2\sqcup K_1$ and $H_2=K_1\sqcup K_1$, then $n_{11}=2$, $n_{12}=n_{21}=n_{22}=1<m$, and $G_2=H_1*H_2$ is the graph in Figure~\ref{graph}. We have that
$J_{G_1,G_2}$ is never unmixed, since otherwise, for $T_1=\{1,3,5\}\in \mathcal{C}(G_2)$, we have $c_{G_2}(T_1)=2$ and hence $m=4$,
by Proposition~\ref{unmixed3}, but
for $T_2=\{2,4\}\in \mathcal{C}(G_2)$, we have $c_{G_2}(T_2)=2$ and hence $m=3$, again by Proposition~\ref{unmixed3}, which is a contradiction.}
\end{exam}

\section{ Binomial edge ideal of the corona of graphs }\label{Corona}

\noindent The \textit{corona product} $H\odot G$ of two graphs $H$ and $G$ is defined as the graph
obtained from $H$ and $G$ by taking one copy of $H$ and $|V(H)|$ copies of $G$ and joining
by an edge each vertex from the $i$-th copy of $G$ with the $i$-th vertex of $H$. For each
$v\in V(H)$, we often refer to $G_{v}$ for the copy of $G$ connected to $v$ in $H\odot G$. For example, Figure~\ref{corona1} shows the graph $K_3\odot K_2$ and Figure~\ref{corona2} depicts the graph $K_2\odot K_3$.

\begin{center}
\begin{figure}
\hspace{0 cm}
\includegraphics[height=3cm,width=3.5cm]{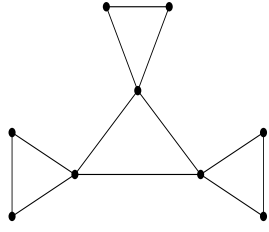}
\caption{\footnotesize{}The graph $K_3\odot K_2$}\hspace{2 cm}
\label{corona1}
\end{figure}
\end{center}

\begin{center}
\begin{figure}
\hspace{0 cm}
\includegraphics[height=2.2cm,width=3.6cm]{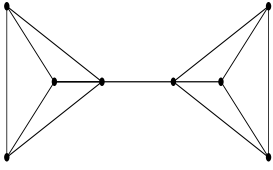}
\caption{\footnotesize{}The graph $K_2\odot K_3$}\hspace{2 cm}
\label{corona2}
\end{figure}
\end{center}

As a special case, when $G$ is just a vertex, $H\odot G$ is exactly $W(H)$, i.e. the graph which is obtained from a graph $H$ by adding a whisker to each of its vertices, whose some properties of the (monomial) edge ideal have been studied. For example, it is an interesting fact that the (monomial) edge ideal of a graph, obtained by adding whiskers to each vertex, is always Cohen-Macaulay. But, one could deduce from Theorem~\ref{CM-corona} that the binomial edge ideal of such a graph is Cohen-Macaulay if and only if the original graph is complete. In the sequel, we use the following, for simplicity:

Suppose that $H$ and $H'$ are connected graphs on disjoint sets of vertices $[n_1]$ and $[n_2]$, respectively. We consider the graph $H\odot H'$. Let $\emptyset \neq T\subseteq [n_1]$ and for all $v\in T$, $T_{v}\in \mathcal{C}(H'_{v})$ such that if $N_H(v)\subseteq T$, then $T_{v}\neq\emptyset$. Then, we say that $T, T_{v}$, for all $v\in T$, satisfy the property $\mathcal{P}$. Here, we mean by $N_H(v)$, the set of the neighbors of the vertex $v$ in $H$, i.e. vertices which are adjacent with $v$. Also, by ${[n_2]}_v$, we mean the set of vertices of the graph $H'_{v}$. If there is no confusion, we might write simply $[n_2]$ instead.

\begin{prop}\label{Prime}
If $H$ and $H'$ are connected graphs on disjoint sets of vertices $[n_1]$ and $[n_2]$, respectively, then we have \\\\
{\em{(a)}} $\mathcal{C}(H\odot H')=\{\emptyset\}\cup \{T\cup (\bigcup_{v\in T}T_{v}):T,T_{v},~\mathrm{for~all}~v\in T,~\mathrm{satisfy~the~property}~\mathcal{P} \}$. \\\\
{\em{(b)}} $\mathrm{height}\hspace{0.35mm}J_{H\odot H'}=\mathrm{min}\{\mathrm{height}\hspace{0.35mm}P_{T}(H)+
\sum_{v\in T}\mathrm{height}\hspace{0.35mm}P_{T_{v}}(H'_{v})+n_2(n_1-|T|),n_1+n_1n_2-1: T,T_{v},~\mathrm{for~all}~ v\in T,~\mathrm{satisfy~the~property}~\mathcal{P}\}$. \\\\
{\em{(c)}} $\mathrm{dim}\hspace{0.35mm}S/J_{H\odot H'}=\mathrm{max}\{\mathrm{dim}\hspace{0.35mm}S_1/P_{T}(H)+\sum_{v\in T}\mathrm{dim}\hspace{0.35mm}S_2/P_{T_{v}}(H'_{v})+n_2(n_1-|T|),n_1+n_1n_2+1:
T,T_{v},~\mathrm{for~all}~v\in T,~\mathrm{satisfy~the~property}~\mathcal{P}\}$, where $S_1=K\big{[}x_i,y_i:i\in [n_1]\big{]}$ and $S_2=K\big{[}x_i,y_i:i\in [n_2]\big{]}$.
\end{prop}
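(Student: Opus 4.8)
The proof will follow the template of Propositions~\ref{both connected} and~\ref{one connected}: part~(a) is the substantive step, and parts~(b) and~(c) are then extracted from it via the height and dimension formulas. Throughout I set $G:=H\odot H'$ and $N:=n_1+n_1n_2=|V(G)|$, and note that $G$ is connected. The two structural facts driving everything are that the copy $H'_v$ is attached to the rest of $G$ only through $v$, and that $v$ is adjacent to \emph{every} vertex of $H'_v$. I would also record at the outset the auxiliary fact that a nonempty $T'\in\mathcal{C}(H')$ of a connected graph $H'$ satisfies $c_{H'}(T')\geq 2$, as it is used in both directions.

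For the inclusion $\supseteq$ in~(a) I would take $T,T_v$ satisfying $\mathcal{P}$, put $S:=T\cup(\bigcup_{v\in T}T_v)$, and verify the cut point property for each $i\in S$, splitting into two cases. If $i\in T_v$ for some $v\in T$, then in $G_{(V(G)\setminus S)\cup\{i\}}$ the vertex $v$ is absent, so $i$ only meets vertices of $(H'_v)_{([n_2]_v\setminus T_v)\cup\{i\}}$, a subgraph isolated from the rest of $G$; hence $i$ is a cut point precisely because $T_v\in\mathcal{C}(H'_v)$. If $i\in T$, I would compare the neighbours of $i$ in the core $G_{([n_1]\setminus T)\cup(\bigcup_{w\notin T}[n_2]_w)}$ with its neighbours in the remainder of $H'_i$. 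When $N_H(i)\not\subseteq T$, the vertex $i$ meets both a core component and the (nonempty) remainder of $H'_i$, so it joins at least two components; when $N_H(i)\subseteq T$, it meets only the remainder of $H'_i$, and here the clause of $\mathcal{P}$ forcing $T_i\neq\emptyset$ together with the auxiliary fact gives $c_{H'_i}(T_i)\geq 2$, so again $i$ joins at least two components. In every case $i$ is a cut point, whence $S\in\mathcal{C}(G)$.

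For $\subseteq$, given $\emptyset\neq S\in\mathcal{C}(G)$ I would set $T:=S\cap[n_1]$ and $T_v:=S\cap[n_2]_v$. The first claim is that $T_v\neq\emptyset$ forces $v\in T$: if $v\notin T$, then $v$ is present and, being adjacent to all of $H'_v$, keeps every surviving vertex of that copy in a single component, so no $i\in T_v$ could be a cut point, contradicting $S\in\mathcal{C}(G)$. Hence $S=T\cup(\bigcup_{v\in T}T_v)$. Next, for $v\in T$ the same isolation argument shows each $i\in T_v$ is a cut point of $(H'_v)_{([n_2]_v\setminus T_v)\cup\{i\}}$, that is, $T_v\in\mathcal{C}(H'_v)$. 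Finally, if $N_H(v)\subseteq T$ while $T_v=\emptyset$, then in $G_{(V(G)\setminus S)\cup\{v\}}$ the vertex $v$ meets only the connected copy $H'_v$ and so is not a cut point, a contradiction; thus the last clause of $\mathcal{P}$ holds, completing~(a).

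Parts~(b) and~(c) will then follow as in Proposition~\ref{both connected}. Using $\mathrm{height}\hspace{0.35mm}P_S(G)=N+|S|-c_G(S)$ together with $|S|=|T|+\sum_{v\in T}|T_v|$ and the component count $c_G(S)=c_H(T)+\sum_{v\in T}c_{H'_v}(T_v)$ (the core contributes $c_H(T)$ components, and each surviving copy contributes $c_{H'_v}(T_v)$, read as $1$ when $T_v=\emptyset$), a direct rearrangement rewrites $\mathrm{height}\hspace{0.35mm}P_S(G)$ as $\mathrm{height}\hspace{0.35mm}P_T(H)+\sum_{v\in T}\mathrm{height}\hspace{0.35mm}P_{T_v}(H'_v)+n_2(n_1-|T|)$; minimising over $\mathcal{C}(G)$ and including $\mathrm{height}\hspace{0.35mm}P_{\emptyset}(G)=N-1$ yields~(b), and~(c) follows from $\mathrm{dim}\hspace{0.35mm}S/J_G=2N-\mathrm{height}\hspace{0.35mm}J_G$. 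The one genuinely delicate point, and the step I would write most carefully, is the cut point analysis of $i\in T$ when $N_H(i)\subseteq T$: this is exactly where property $\mathcal{P}$ is needed, and it is what distinguishes the corona from the simpler join situations of the previous section.
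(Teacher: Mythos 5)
Your proposal is correct and follows essentially the same route as the paper's proof: part~(a) is established by the identical two-directional cut point analysis (splitting on $i\in T_v$ versus $i\in T$, and within the latter on whether $N_H(i)\subseteq T$, which is exactly where property $\mathcal{P}$ enters), and parts~(b) and~(c) are deduced from~(a) via the height formula $\mathrm{height}\hspace{0.35mm}P_{T'}(G)=n+|T'|-c_G(T')$ and $\mathrm{dim}\hspace{0.35mm}S/J_G=2n-\mathrm{height}\hspace{0.35mm}J_G$, just as the paper does by referring back to Proposition~\ref{both connected}. Your writeup is in fact slightly more explicit than the paper in parts~(b) and~(c), where you record the component count $c_G(S)=c_H(T)+\sum_{v\in T}c_{H'_v}(T_v)$ that the paper leaves implicit.
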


\begin{proof}
(a) Suppose that $G:=H\odot H'$ and $n:=n_1+n_1n_2$. Let $T':=T\cup (\bigcup_{v\in T}T_{v})$, where $T, T_{v}$, for all $v\in T$, satisfy the property $\mathcal{P}$. So that $\emptyset \neq T\subseteq [n_1]$ and for all $v\in T$, $T_{v}\in \mathcal{C}(H'_{v})$ such that if $N_H(v)\subseteq T$, then $T_{v}\neq\emptyset$. Let $w\in T'$. We show that $w$ is a cut point of $G_{([n]\setminus T')\cup \{w\}}$. If $w\in T$ and $N_H(w)\subseteq T$, then by the assumption, $\emptyset \neq T_w\in \mathcal{C}(H'_{w})$ and hence $w$ is a cut point of $w*{(H'_{w})}_{[n_2]\setminus T_w}$, since ${(H'_{w})}_{[n_2]\setminus T_w}$ is a disconnected graph. So that $w$ is a cut point of $G_{([n]\setminus T')\cup \{w\}}$, because the only connected component of $G_{([n]\setminus T')\cup \{w\}}$ which contains $w$ is $w*{(H'_{w})}_{[n_2]\setminus T_w}$. If $w\in T$ and $N_H(w)\nsubseteq T$, then deleting the vertex $w$ from $G_{([n]\setminus T')\cup \{w\}}$, gives at least two connected components, one containing a neighbor of $w$ in $H$, and one the graph ${(H'_{w})}_{[n_2]\setminus T_w}$. Thus, $w$ is also a cut point of $G_{([n]\setminus T')\cup \{w\}}$ in this case. If $w\in T_v$, for some $v\in T$, then $w$ is a cut point of ${(H'_{v})}_{({[n_2]}\setminus T_v)\cup \{w\}}$, because $T_v\in \mathcal{C}(H'_{v})$. Since the connected components of ${(H'_{v})}_{({[n_2]}\setminus T_v)\cup \{w\}}$ are also some of the connected components of $G_{([n]\setminus T')\cup \{w\}}$, so that $w$ is also a cut point of $G_{([n]\setminus T')\cup \{w\}}$. So that $w$ is a cut point of $G_{([n]\setminus T')\cup \{w\}}$. Conversely, if $\emptyset \neq T'\in \mathcal{C}(G)$, then we have $T'=T\cup (\bigcup_{v\in [n_1]}T_{v})$, for some $T\subseteq [n_1]$ and $T_v\subseteq {[n_2]}_v$, for all $v\in [n_1]$. If $T_v\neq \emptyset$, for some $v\in [n_1]$, then $v\in T$, since otherwise, no vertex $w$ of $T_v$ is a cut point of $G_{([n]\setminus T')\cup \{w\}}$, a contradiction. So, since $T'\neq \emptyset$, we have $T\neq \emptyset$. Thus, we have $T'=T\cup (\bigcup_{v\in T}T_{v})$ for some $\emptyset \neq T\subseteq [n_1]$ and $T_v\subseteq {[n_2]}_v$, for all $v\in T$. Let $v\in T$ and $w\in T_v$. Since $T'\in \mathcal{C}(G)$, we have that $w$ is a cut point of $G_{([n]\setminus T')\cup \{w\}}$. Thus, obviously, $w$ is also a cut point of ${(H'_{v})}_{({[n_2]}_v\setminus T_v)\cup \{w\}}$, which implies that $T_v\in \mathcal{C}(H'_{v})$. Finally, let $v\in T$ be such that $N_H(v)\subseteq T$ and $T_v=\emptyset$. Then $v$ is not a cut point of $G_{([n]\setminus T')\cup \{v\}}$, since it is adjacent to all the vertices of the connected graph ${(H'_{v})}_{{[n_2]}_v}$. It is a contradiction, because $T'\in \mathcal{C}(G)$. Thus, if $N_H(v)\subseteq T$, then $T_{v}\neq\emptyset$. \\
\indent Using (a), parts (b) and (c) follow by similar discussions as in the proof of Proposition~\ref{both connected}.
\end{proof}

\begin{cor}\label{H:complete}
Let $H$ and $H'$ be connected graphs on disjoint sets of vertices $[n_1]$ and $[n_2]$, respectively. If $H$ is complete and $J_{H'}$ is unmixed, then $$\mathrm{dim}\hspace{0.35mm}S/J_{H\odot H'}=n_1+n_1n_2+1.$$
\end{cor}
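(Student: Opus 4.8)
The plan is to read off the dimension directly from the combinatorial formula in Proposition~\ref{Prime}(c) and to show that among the quantities over which the maximum is taken, the explicit constant $n_1+n_1n_2+1$ always dominates. Since this constant already appears inside the maximum, it suffices to prove that for every choice of $T$ and $\{T_v\}_{v\in T}$ satisfying property $\mathcal{P}$ one has
$$\dim S_1/P_T(H)+\sum_{v\in T}\dim S_2/P_{T_v}(H'_v)+n_2(n_1-|T|)\leq n_1+n_1n_2+1.$$
I would establish this by computing each of the two kinds of summand separately, using the height formula $\mathrm{height}\,P_T(G)=n+|T|-c(T)$ of \cite[Lemma~3.1]{HHHKR} together with $\dim S/P_T(G)=2n-\mathrm{height}\,P_T(G)=n-|T|+c(T)$ for a graph on $n$ vertices.

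First I would treat the $H'_v$ factors. Because $H'$ is connected and $J_{H'}$ is unmixed, Proposition~\ref{unmixed1} gives $c_{H'}(T_v)=|T_v|+1$ for every $T_v\in\mathcal{C}(H'_v)$ (including $T_v=\emptyset$). Hence each summand collapses to the constant value $\dim S_2/P_{T_v}(H'_v)=n_2-|T_v|+c_{H'}(T_v)=n_2+1$, independent of the particular cut set $T_v$. Next I would treat the $H$ factor using that $H$ is complete on $[n_1]$: the induced subgraph $H_{[n_1]\setminus T}$ is again complete, hence connected when $T\subsetneq[n_1]$ and empty when $T=[n_1]$, giving $c_H(T)=1$ in the first case and $c_H(T)=0$ in the second. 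Thus $\dim S_1/P_T(H)=n_1-|T|+1$ when $T\neq[n_1]$ and $\dim S_1/P_T(H)=0$ when $T=[n_1]$.

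Substituting these values, for $T\subsetneq[n_1]$ the left-hand side becomes $(n_1-|T|+1)+|T|(n_2+1)+n_2(n_1-|T|)$, and I would check that the $|T|$ and $|T|n_2$ contributions cancel, leaving exactly $n_1+n_1n_2+1$; for $T=[n_1]$ (where property $\mathcal{P}$ forces every $T_v\neq\emptyset$, but this is irrelevant to the count) it becomes $0+n_1(n_2+1)+0=n_1+n_1n_2$, which is strictly smaller. In either case the bound holds, so the maximum in Proposition~\ref{Prime}(c) equals $n_1+n_1n_2+1$. There is no serious obstacle here: the only points requiring care are the uniform evaluation $\dim S_2/P_{T_v}(H'_v)=n_2+1$, which is precisely where unmixedness of $J_{H'}$ enters, and the boundary case $T=[n_1]$, where $\dim S_1/P_T(H)$ degenerates to $0$; both simply feed a purely arithmetic cancellation.
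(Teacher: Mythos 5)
Your proof is correct and follows essentially the same route as the paper's: both read the answer off Proposition~\ref{Prime}(c), evaluating each term via $\dim S_i/P_T = (\text{number of vertices}) - |T| + c(T)$ and using completeness of $H$ together with unmixedness of $J_{H'}$ (via Proposition~\ref{unmixed1}, so $c_{H'}(T_v)=|T_v|+1$) to make every term collapse arithmetically to $n_1+n_1n_2+1$. If anything, you are slightly more careful than the paper, which asserts $c_H(T)=1$ uniformly and thereby glosses over the admissible boundary case $T=[n_1]$, where $c_H(T)=0$ under the paper's own convention (used explicitly in the proof of Lemma~\ref{unmixed2-corona}); your separate computation showing that this term equals $n_1+n_1n_2$, strictly below the constant $n_1+n_1n_2+1$ already present in the maximum, closes that small lapse without changing the conclusion.
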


\begin{proof}
Let $\emptyset \neq T\subseteq [n_1]$ and for all $v\in T$, $T_v\in \mathcal{C}(H')$ with the property $\mathcal{P}$. We have $\mathrm{dim}\hspace{0.35mm}S_1/P_{T}(H)+\sum_{v\in T}\mathrm{dim}\hspace{0.35mm}S_2/P_{T_{v}}(H'_{v})+n_2(n_1-|T|)=\big{(}n_1-|T|+c_{H}(T)\big{)}+\big{(}\sum_{v\in T}(n_2-|T_v|+c_{H'}(T_v))\big{)}+n_2(n_1-|T|)$. Since $H$ is complete, we have $c_H(T)=1$, and since $J_{H'}$ is unmixed, we have $c_{H'}(T_v)=|T_v|+1$, by Proposition~\ref{unmixed1}. Thus, we get $\mathrm{dim}\hspace{0.35mm}S_1/P_{T}(H)+\sum_{v\in T}\mathrm{dim}\hspace{0.35mm}S_2/P_{T_{v}}(H'_{v})+n_2(n_1-|T|)=n_1n_2+n_1+1$. Hence, by Proposition~\ref{Prime}, part~(c), we get the result.
\end{proof}

\begin{rem}
{\em In Corollary~\ref{H:complete}, if $H$ is  not complete or $J_{H'}$ is not unmixed, then the dimension of $S/J_{H\odot H'}$ might not be equal to $n_1+n_1n_2+1$, but equal to the other term appeared in Proposition~\ref{Prime}, part~(c). For instance, if $H=H'=P_3$, then one could easily check that $\mathrm{dim}\hspace{0.35mm}S/J_{H\odot H'}=14$, but $n_1+n_1n_2+1=13$. Also, if $H=K_2$ and $H'=K_{1,3}$, then one could check that $J_{H'}$ is not unmixed and $\mathrm{dim}\hspace{0.35mm}S/J_{H\odot H'}=12$, but $n_1+n_1n_2+1=11$. }
\end{rem}

Let $H$ and $H'$ be connected graphs on disjoint sets of vertices $[n_1]$ and $[n_2]$, respectively. If $n_1=n_2=1$, then $H\odot H'$ is just an edge. If $n_1=1$ and $n_2\geq 2$, then $H\odot H'$ is the cone of a vertex on the graph $H'$, which was studied in Section~\ref{Join} and also in \cite{RR}. Now, we focus on the other cases. The following theorem determines those graphs whose binomial edge ideal of their corona is Cohen-Macaulay and unmixed.

\begin{thm}\label{CM-corona}
Let $H$ and $H'$ be connected graphs on disjoint sets of vertices $[n_1]$ and $[n_2]$, respectively, where $n_1\geq 2$, $n_2\geq 1$.
Then the following conditions are equivalent: \\
{\em{(a)}} $S/J_{H\odot H'}$ is Cohen-Macaulay. \\
{\em{(b)}} $J_{H\odot H'}$ is unmixed. \\
{\em{(c)}} $H$ and $H'$ are complete.
\end{thm}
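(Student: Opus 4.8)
The plan is to establish the cycle of implications $(a)\Rightarrow(b)\Rightarrow(c)\Rightarrow(a)$. The implication $(a)\Rightarrow(b)$ is standard, since a Cohen-Macaulay ring is unmixed. For $(c)\Rightarrow(a)$, I would observe that when $H$ and $H'$ are complete the corona $H\odot H'$ is a block graph: its maximal cliques are the central clique on $[n_1]$ together with, for each $v\in[n_1]$, the clique on $\{v\}\cup V(H'_v)\cong K_{n_2+1}$. Any two of these pendant cliques are disjoint (the copies are on disjoint vertex sets), and each meets the central clique in the single vertex $v$; hence every vertex of $[n_1]$ lies in exactly two maximal cliques and every other vertex in exactly one. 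By Corollary~\ref{CM-generalized} (equivalently \cite[Theorem~1.1]{EHH}), $S/J_{H\odot H'}$ is then Cohen-Macaulay.

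The heart of the argument is $(b)\Rightarrow(c)$. Here I would combine the description of $\mathcal{C}(H\odot H')$ from Proposition~\ref{Prime}(a) with the unmixedness criterion of Proposition~\ref{unmixed1}: $J_{H\odot H'}$ unmixed forces $c_G(T)=|T|+1$ for every $T\in\mathcal{C}(H\odot H')$. The strategy is to feed carefully chosen members of $\mathcal{C}(H\odot H')$ into this equality and read off the forced structure, treating $H'$ and $H$ separately.

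To force $H'$ complete, I would argue by contradiction: assume $H'$ is not complete and pick $\emptyset\neq T'\in\mathcal{C}(H')$. Take $T=[n_1]$ together with a copy of $T'$ inside each $G_v$. Since $N_H(v)\subseteq[n_1]=T$ for every $v$, this set satisfies property $\mathcal{P}$ and so lies in $\mathcal{C}(H\odot H')$. Deleting it removes all of $[n_1]$, so the remaining graph is the disjoint union over $v$ of the $(H'_v)_{[n_2]\setminus T'}$, giving $c_G(T)=n_1\,c_{H'}(T')$ while $|T|+1=n_1(1+|T'|)+1$. The unmixedness equality then reads $n_1\bigl(c_{H'}(T')-|T'|-1\bigr)=1$, which is impossible because $n_1\geq2$ cannot divide $1$; hence $H'$ is complete.

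To force $H$ complete, I would again argue by contradiction: if $H$ is not complete there are non-adjacent $a,b\in[n_1]$, so there is a minimal set $S$ of vertices separating $a$ from $b$. Taking $T=S$ with all $T_v=\emptyset$ lies in $\mathcal{C}(H\odot H')$, because each $s\in S$ has a neighbor on both sides of the separation, whence $N_H(s)\nsubseteq S$ and property $\mathcal{P}$ holds. Deleting $S$ leaves each intact copy $G_v$ with $v\in S$ as its own component, plus the components of $H_{[n_1]\setminus S}$ with their copies attached, so $c_G(S)=|S|+c_H(S)\geq|S|+2>|S|+1$, contradicting Proposition~\ref{unmixed1}; hence $H$ is complete as well. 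The step I expect to be the main obstacle is precisely this bookkeeping of $c_G(T)$ via Proposition~\ref{Prime}(a): one must track that a vertex $v\in T$ with $T_v=\emptyset$ splits off the \emph{entire} intact copy $G_v$ as one extra component (contributing $+1$), whereas a nonempty $T_v$ contributes $c_{H'}(T_v)$ components, and that no adjacencies between distinct copies survive the deletion. Getting these counts exactly right in each chosen configuration is what drives the two contradictions.
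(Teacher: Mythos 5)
Your proof is correct, but it takes a genuinely different route from the paper in both nontrivial implications. For (b)$\Rightarrow$(c), the paper does not argue directly on $H\odot H'$: it proves three auxiliary lemmas about binomial edge ideals of \emph{pairs} (Lemma~\ref{Whisker1}, Lemma~\ref{unmixed1-corona}, Lemma~\ref{unmixed2-corona}, all phrased for $J_{G,H\odot H'}$ with $G$ on $m\geq 2$ vertices, using Proposition~\ref{unmixed3}), and then specializes to $G=K_2$, splitting into the cases $n_2=1$ (where $H\odot H'=W(H)$ and Lemma~\ref{Whisker1} applies) and $n_2\geq 2$. You instead feed two explicit families of cut sets from Proposition~\ref{Prime}(a) into the criterion of Proposition~\ref{unmixed1}: taking $T=[n_1]$ with a copy of a nonempty $T'\in\mathcal{C}(H')$ in every pendant copy gives $n_1\bigl(c_{H'}(T')-|T'|-1\bigr)=1$, impossible for $n_1\geq 2$, so $H'$ is complete; taking a minimal $a$--$b$ separator $S$ in $H$ with all $T_v=\emptyset$ (legitimate, since minimality guarantees $N_H(s)\nsubseteq S$ for each $s\in S$, so property $\mathcal{P}$ holds) gives $c_G(S)=|S|+c_H(S)\geq |S|+2$, so $H$ is complete. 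Your component counts ($n_1c_{H'}(T')$ in the first case, $|S|+c_H(S)$ in the second) are the delicate points and they are right. For (c)$\Rightarrow$(a), the paper views $K_{n_1}\odot K_{n_2}$ as obtained by gluing a copy of $K_{n_2+1}$ to each vertex of $K_{n_1}$ and invokes the gluing theorem \cite[Theorem~2.7]{RR}, whereas you observe it is a block graph in which every vertex lies in at most two maximal cliques and apply Corollary~\ref{CM-generalized} (equivalently \cite[Theorem~1.1]{EHH}); both are valid. The trade-off: your argument is shorter and self-contained within results already stated earlier in the paper, and avoids any case split on $n_2$; the paper's route is longer but its pair-ideal lemmas are strictly stronger statements (covering all complete $G$, i.e.\ all $m\geq 2$, not just $m=2$) and are of independent interest.
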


\begin{proof}
(a) $\Rightarrow$ (b) is well-known. \\
\indent (b) $\Rightarrow$ (c) follows by Lemma~\ref{Whisker1}, Lemma~\ref{unmixed1-corona} and Lemma~\ref{unmixed2-corona}, which will be proved in the sequel. Indeed, suppose that $J_{H\odot H'}$ is unmixed. If $n_2=1$, then the result is clear from Lemma~\ref{Whisker1}. If $n_2\geq 2$, then by Lemma~\ref{unmixed1-corona}, $J_{H'}$ is unmixed, since with the notation of Lemma~\ref{unmixed1-corona}, we have $m=2$ and $G=K_2$. Then, by Lemma~\ref{unmixed2-corona}, unmixedness of $J_{H'}$ and $J_{H\odot H'}$ yields that $H$ and $H'$ are complete graphs.  \\
\indent (c) $\Rightarrow$ (a) Note that $K_{n_1}\odot K_{n_2}$ could be seen as the graph obtained from gluing each vertex of $K_{n_1}$ to a vertex of a copy of $K_{n_2+1}$. Then the result follows by \cite[Theorem~2.7]{RR} and the fact that $S/J_{K_{n_1}}$ and $S/J_{K_{n_2+1}}$ are Cohen-Macaulay.
\end{proof}

Now, we prove the following lemmas which were applied in the proof of Theorem~\ref{CM-corona}.

\begin{lem}\label{Whisker1}
Let $G$ and $H$ be connected graphs on disjoint sets of vertices $[m]$ and $[n]$, respectively, where $2\leq m\leq n$.
Then the following conditions are equivalent: \\
{\em{(a)}} $S/J_{G,W(H)}$ is Cohen-Macaulay. \\
{\em{(b)}} $J_{G,W(H)}$ is unmixed. \\
{\em{(c)}} $G$ and $H$ are complete and $m=2$.
\end{lem}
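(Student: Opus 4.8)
The plan is to prove the cycle of implications, with (a) $\Rightarrow$ (b) being the standard fact that Cohen--Macaulayness implies unmixedness, and (c) $\Rightarrow$ (a) being quick: when $m=2$ and $G,H$ are complete, $J_{G,W(H)}\cong J_{W(K_n)}$, and $W(K_n)$ is a block graph in which every vertex lies in at most two maximal cliques (each original vertex lies in the clique $K_n$ and in its whisker-edge, each whisker vertex in a single edge), so Cohen--Macaulayness follows from \cite[Theorem~1.1]{EHH}, equivalently from the gluing result \cite[Theorem~2.7]{RR}. The substance of the lemma is the implication (b) $\Rightarrow$ (c), and I would organize its proof around an explicit description of $\mathcal{C}(W(H))$.

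First I would record the structural facts about cut sets of $W(H)$, writing $v_1,\dots,v_n$ for the vertices of $H$ and $w_1,\dots,w_n$ for the attached whiskers. Since each $w_i$ has degree at most one in every induced subgraph of $W(H)$, it can never be a cut point, so no $T\in\mathcal{C}(W(H))$ contains a whisker; hence every nonempty $T\in\mathcal{C}(W(H))$ has the form $v_A:=\{v_i:i\in A\}$ for some $\emptyset\neq A\subseteq[n]$. Analysing the cut-point property of a single $v_i$ inside the induced subgraph on $([2n]\setminus v_A)\cup\{v_i\}$ --- whose neighbours of $v_i$ there are $w_i$ together with the $H$-neighbours of $v_i$ lying outside $A$ --- shows that $v_A\in\mathcal{C}(W(H))$ if and only if $N_H(v_i)\not\subseteq A$ for every $i\in A$. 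A direct count then gives $c_{W(H)}(v_A)=|A|+c_H(A)$, where $c_H(A)$ denotes the number of connected components of $H-\{v_i:i\in A\}$: deleting $v_A$ isolates the $|A|$ whiskers and leaves $W(H-\{v_i:i\in A\})$, which has exactly $c_H(A)$ components.

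With this in hand I would apply Proposition~\ref{unmixed3} with $G_1=G$ and $G_2=W(H)$, which is legitimate since $2n\geq m$ and $W(H)$ is connected: unmixedness gives that $G$ is complete and that $(c_{W(H)}(v_A)-1)(m-1)=|A|$, i.e. $(|A|+c_H(A)-1)(m-1)=|A|$, for every valid $A$. Choosing $v_i$ to be a non-cut vertex of $H$ (which exists because $H$ is connected with $n\geq 2$ vertices) makes $A=\{i\}$ valid with $c_H(\{i\})=1$, so the relation reads $(m-1)=1$ and forces $m=2$. Once $m=2$ the relation collapses to $c_H(A)=1$ for every valid $A$; that is, $H-\{v_i:i\in A\}$ must be connected whenever each vertex of $A$ has a neighbour outside $A$.

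It remains to deduce that $H$ is complete, and this is the step I expect to require the most care. Arguing by contradiction, suppose $H$ has two non-adjacent vertices $v_a,v_b$ and let $S$ be a minimal $v_a$--$v_b$ vertex separator, which exists by Menger's theorem. Taking $A$ to be the index set of $S$, the graph $H-S$ is disconnected, so $c_H(A)\geq 2$; and by minimality of $S$ every vertex of $S$ has a neighbour in the component of $v_a$ and a neighbour in the component of $v_b$ of $H-S$, so $N_H(v_i)\not\subseteq A$ for each $i\in A$ and $A$ is valid. This contradicts $c_H(A)=1$, so $H$ must be complete, completing (b) $\Rightarrow$ (c). The main obstacles are the faithful bookkeeping in the description of $\mathcal{C}(W(H))$ together with the component formula, and the observation that a minimal separator is automatically a valid cut set; both are elementary but must be handled carefully to avoid degenerate cases such as $A=[n]$, which is never valid.
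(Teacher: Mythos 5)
Your proof is correct and follows essentially the same route as the paper's: both rest on the explicit description of the cut sets of $W(H)$ (the paper cites Proposition~\ref{Prime}(a), you re-derive it by hand), the component count $c_{W(H)}(T)=c_H(T)+|T|$, and the numerical criterion of Proposition~\ref{unmixed3}. The only differences are organizational and interchangeable: the paper deduces completeness of $H$ first, via the standard fact that $\overline{\mathcal{C}}(H)\neq\emptyset$ for non-complete $H$ and the resulting contradiction $(c_H(T)+|T|-1)(m-1)>|T|$, and then gets $m=2$, whereas you pin down $m=2$ first and then use a minimal-separator argument; likewise your $(c)\Rightarrow(a)$ via \cite[Theorem~1.1]{EHH} is equivalent to the paper's appeal to \cite[Theorem~2.7]{RR}.
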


\begin{proof}
(a) $\Rightarrow$ (b) is well-known. \\
\indent (b) $\Rightarrow$ (c) Suppose that $J_{G,W(H)}$ is unmixed. Then, by Proposition~\ref{unmixed3}, $G$ is a complete graph. Now, we show that $H$ is complete. Suppose on the contrary that $H$ is not complete. Thus, there exists $\emptyset \neq T\in \mathcal{C}(H)$. By Proposition~\ref{Prime}, part~(a), we have $T\in \mathcal{C}(W(H))$. Since $J_{G,W(H)}$ is unmixed, we have $(c_{W(H)}(T)-1)(m-1)=|T|$, again by Proposition~\ref{unmixed3}. One can easily see that $c_{W(H)}(T)=c_H(T)+|T|$. Thus, we have $(c_H(T)+|T|-1)(m-1)=|T|$, which is a contradiction, because $m\geq 2$ and $c_H(T)\geq 2$. So that $H$ is a complete graph. Now, let $\emptyset \neq T\in \mathcal{C}(W(H))$. So, $\emptyset \neq T\subsetneq [n]$, by Proposition~\ref{Prime}, part~(a). Thus, we have $c_{W(H)}(T)=c_{H}(T)+|T|$. But, $c_{H}(T)=1$, since $H$ is complete. Hence, we have $c_{W(H)}(T)=|T|+1$. Since $J_{G,W(H)}$ is unmixed, we have $(c_{W(H)}(T)-1)(m-1)=|T|$, by Proposition~\ref{unmixed3}. Thus, $|T|(m-1)=|T|$, which implies that $m=2$. \\
\indent (c) $\Rightarrow$ (a) It suffices to apply \cite[Theorem~2.7]{RR}, and the fact that $S/J_{K_{n}}$ and $S/J_{K_{2}}$ are Cohen-Macaulay.
\end{proof}

\begin{lem}\label{unmixed1-corona}
Let $G$, $H$ and $H'$ be connected graphs on disjoint sets of vertices $[m]$, $[n_1]$ and $[n_2]$, respectively, where $2\leq m\leq n_1,n_2$.
If $J_{G,H\odot H'}$ is unmixed, then \\
{\em{(a)}} $H$ is complete, and \\
{\em{(b)}} $J_{G,H'}$ is unmixed if and only if $m=2$.
\end{lem}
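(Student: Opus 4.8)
The plan is to read off everything from the explicit description of $\mathcal{C}(H\odot H')$ in Proposition~\ref{Prime}(a), feeding carefully chosen cut sets into the numerical criterion of Proposition~\ref{unmixed3}. Since $H\odot H'$ is connected on $n_1+n_1n_2\geq m$ vertices, the hypothesis that $J_{G,H\odot H'}$ is unmixed immediately gives, via Proposition~\ref{unmixed3}, that $G$ is complete and that
\begin{equation*}
(c_{H\odot H'}(T)-1)(m-1)=|T|\qquad\text{for every }T\in\mathcal{C}(H\odot H').\tag{$\ast$}
\end{equation*}
Each of the two parts then follows by exhibiting a suitable $T$ and substituting into $(\ast)$.

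For part (a) I would argue by contradiction. If $H$ is not complete, choose $\emptyset\neq T_0\in\mathcal{C}(H)$. The point is that $T_0$, regarded inside $[n_1]$ with $T_v=\emptyset$ for each $v\in T_0$, satisfies property $\mathcal{P}$: since $T_0$ has the cut point property in $H$, every $v\in T_0$ has a neighbour outside $T_0$ (otherwise $v$ would be isolated, hence not a cut point, in $H_{([n_1]\setminus T_0)\cup\{v\}}$), so the clause of $\mathcal{P}$ requiring $T_v\neq\emptyset$ when $N_H(v)\subseteq T_0$ is vacuous. Hence $T_0\in\mathcal{C}(H\odot H')$ by Proposition~\ref{Prime}(a). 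Deleting $T_0$ breaks $H\odot H'$ into the $c_H(T_0)$ components of $H_{[n_1]\setminus T_0}$ (each keeping its attached copies of $H'$) together with the $|T_0|$ now-isolated copies $H'_v$ with $v\in T_0$, so $c_{H\odot H'}(T_0)=c_H(T_0)+|T_0|$. Substituting into $(\ast)$ yields $(c_H(T_0)+|T_0|-1)(m-1)=|T_0|$, and since $c_H(T_0)\geq 2$ (a genuine cut set disconnects the connected graph $H$) and $m-1\geq 1$, the left-hand side is at least $|T_0|+1$, a contradiction. Thus $H$ is complete.

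For part (b) the first step is to notice that a single vertex is already a cut set. For any $v_0\in[n_1]$, the pair $T=\{v_0\}$, $T_{v_0}=\emptyset$ satisfies $\mathcal{P}$ (its critical clause is vacuous because $N_H(v_0)\nsubseteq\{v_0\}$, as $H$ is connected with $n_1\geq 2$), so $\{v_0\}\in\mathcal{C}(H\odot H')$; and because $H$ is now complete, deleting $v_0$ leaves $H_{[n_1]\setminus\{v_0\}}$ together with its copies as one component and the freed copy $H'_{v_0}$ as a second, giving $c_{H\odot H'}(\{v_0\})=2$. Then $(\ast)$ forces $(2-1)(m-1)=1$, i.e.\ $m=2$, which already settles the ``only if'' direction. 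For the ``if'' direction, assume $m=2$ and take any $\emptyset\neq T\in\mathcal{C}(H')$; placing a copy of $T$ inside $H'_{v_0}$, Proposition~\ref{Prime}(a) gives $\{v_0\}\cup T\in\mathcal{C}(H\odot H')$, and deleting it produces the $c_{H'}(T)$ components of $(H'_{v_0})_{[n_2]\setminus T}$ plus one further component, so $c_{H\odot H'}(\{v_0\}\cup T)=c_{H'}(T)+1$. Now $(\ast)$ becomes $c_{H'}(T)(m-1)=|T|+1$, which for $m=2$ rearranges to $(c_{H'}(T)-1)(m-1)=|T|$; together with the trivial case $T=\emptyset$ and the completeness of $G$, Proposition~\ref{unmixed3} then certifies that $J_{G,H'}$ is unmixed.

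The work here is essentially bookkeeping rather than a single hard idea: the main thing to get right is, for each chosen $T$, the verification of property $\mathcal{P}$ (especially the clause forcing $T_v\neq\emptyset$ once $N_H(v)\subseteq T$) and the exact count of connected components of $H\odot H'$ after deletion, since these counts are precisely what $(\ast)$ pins down. Once the formula for $c_{H\odot H'}$ on the three families of cut sets above is established, both (a) and the two directions of (b) drop out by direct substitution.
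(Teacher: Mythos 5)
Your proof is correct, and it rests on the same two tools as the paper's: the description of $\mathcal{C}(H\odot H')$ in Proposition~\ref{Prime}(a) and the numerical criterion of Proposition~\ref{unmixed3} evaluated on hand-picked cut sets. Part (a) coincides with the paper's argument (the paper just says to repeat, with $H\odot H'$ in place of $W(H)$, the computation $c_{H\odot H'}(T_0)=c_H(T_0)+|T_0|$ from Lemma~\ref{Whisker1} and the resulting contradiction), with the bonus that you make explicit the verification of property $\mathcal{P}$, namely that no $v\in T_0$ can have $N_H(v)\subseteq T_0$. In part (b) your route is genuinely different in structure: the paper takes an arbitrary nonempty $T\in\mathcal{C}(H')$, forms $T'=\{v\}\cup T_v$, derives the identity $(c_{H'}(T)-1)(m-1)=|T|-m+2$ from unmixedness of the corona, and obtains the equivalence by comparing this identity with the criterion of Proposition~\ref{unmixed3} applied to $J_{G,H'}$; you instead first force $m=2$ unconditionally by feeding the singleton $\{v_0\}$ (with $c_{H\odot H'}(\{v_0\})=2$, since $H$ is complete by part (a)) into $(\ast)$, and only then verify the unmixedness condition for $J_{G,H'}$. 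Your organization therefore proves the strictly stronger assertion that the hypothesis forces both $m=2$ and $J_{G,H'}$ unmixed, and it buys genuine robustness: the paper's comparison argument is vacuous when $H'$ is complete, for then $\mathcal{C}(H')=\{\emptyset\}$, the derived identity quantifies over an empty set, and $J_{G,H'}$ is unmixed for every $m$ (both graphs being complete), so the ``only if'' direction of (b) is not actually covered by the paper's proof in that corner case; your singleton computation closes exactly this gap.
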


\begin{proof}
(a) It is enough to repeat the discussion in the proof of Lemma~\ref{Whisker1} \big{(}(b) $\Rightarrow$ (c)\big{)}, and
substitute $H\odot H'$ instead of $W(H)$. \\
\indent (b) Since $J_{G,H\odot H'}$ is unmixed, we have $G$ is complete, by Proposition~\ref{unmixed3}. Let $\emptyset \neq T\in \mathcal{C}(H')$. Let $v\in [n_1]$ and put $T_v:=T\in \mathcal{C}(H'_{v})$, that is the corresponding vertices of $T$ in the copy $H'_{v}$ of $H'$. Let $T':=T_v\cup \{v\}$. Then, we have $T'\in \mathcal{C}(H\odot H')$, by Proposition~\ref{Prime}. So, we have $|T'|=|T|+1$ and $c_{H\odot H'}(T')=c_{H}(\{v\})+c_{H'}(T)=c_{H'}(T)+1$, as $H$ is complete and hence $c_{H}(\{v\})=1$. Thus, we have $(c_{H'}(T)-1)(m-1)=(c_{H\odot H'}(T')-2)(m-1)=(c_{H\odot H'}(T')-1)(m-1)-(m-1)=|T'|-m+1=|T|-m+2$, where the third equality occurs by using Proposition~\ref{unmixed3}, because $J_{G,H\odot H'}$ is unmixed. Hence, by Proposition~\ref{unmixed3}, we have $J_{G,H'}$ is unmixed if and only if $m=2$, as desired.
\end{proof}

\begin{lem}\label{unmixed2-corona}
Let $G$, $H$ and $H'$ be connected graphs on disjoint sets of vertices $[m]$, $[n_1]$ and $[n_2]$, respectively, where $2\leq m\leq n_1,n_2$.
If $J_{G,H'}$ is unmixed, then the following conditions are equivalent: \\
{\em{(a)}} $J_{G,H\odot H'}$ is unmixed. \\
{\em{(b)}} $H$ and $H'$ are complete, and $m=2$.
\end{lem}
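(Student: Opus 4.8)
The plan is to reduce everything, via Proposition~\ref{unmixed3}, to the case $m=2$ (where $G=K_2$ and $J_{G,H\odot H'}\cong J_{H\odot H'}$), and then to read off the component structure of the corona from Proposition~\ref{Prime}(a), comparing $c(T')$ with $|T'|+1$ as in Proposition~\ref{unmixed1}.

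For the implication (a) $\Rightarrow$ (b), I would first invoke Lemma~\ref{unmixed1-corona}(a) to conclude at once that $H$ is complete. Next, the hypothesis of Lemma~\ref{unmixed1-corona} holds (we assume $J_{G,H\odot H'}$ is unmixed), so its part (b) gives the equivalence ``$J_{G,H'}$ unmixed $\iff m=2$''; since $J_{G,H'}$ is unmixed by assumption, this forces $m=2$. Because $J_{G,H'}$ unmixed also makes $G$ complete by Proposition~\ref{unmixed3}, we have $G=K_2$, hence $J_{G,H\odot H'}\cong J_{H\odot H'}$, and by Proposition~\ref{unmixed1} its unmixedness means $c_{H\odot H'}(T')=|T'|+1$ for every $T'\in\mathcal{C}(H\odot H')$. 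The remaining and central task is to show that $H'$ is complete.

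I would prove this by contradiction. If $H'$ were not complete, choose $\emptyset\neq T\in\mathcal{C}(H')$; in every copy $H'_v$ with $v\in[n_1]$ let $T_v$ be the corresponding copy of $T$, and set $T_0=[n_1]$. Since $H$ is complete, $N_H(v)\subseteq T_0$ for every $v\in T_0$ while each $T_v\neq\emptyset$, so $T':=[n_1]\cup\big(\bigcup_{v\in[n_1]}T_v\big)$ lies in $\mathcal{C}(H\odot H')$ by Proposition~\ref{Prime}(a). Now I compute the two quantities: because all of $[n_1]$ is deleted, each copy $H'_v$ is detached from the rest, so $c_{H\odot H'}(T')=\sum_{v\in[n_1]}c_{H'}(T_v)=n_1\,c_{H'}(T)$, whereas $|T'|=n_1+n_1|T|=n_1(|T|+1)$. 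As $J_{G,H'}$ is unmixed with $m=2$, $J_{H'}$ is unmixed, so $c_{H'}(T)=|T|+1$ by Proposition~\ref{unmixed1}, giving $c_{H\odot H'}(T')=n_1(|T|+1)=|T'|\neq|T'|+1$, a contradiction. Hence $H'$ is complete. I expect this computation of $c_{H\odot H'}(T')$ when the whole of $[n_1]$ is removed to be the main obstacle, since it hinges on correctly reading the connected-component structure of the corona off Proposition~\ref{Prime}.

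For the converse (b) $\Rightarrow$ (a), with $H=K_{n_1}$, $H'=K_{n_2}$ and $m=2$ we again have $G=K_2$ and $J_{G,H\odot H'}\cong J_{H\odot H'}$. The graph $K_{n_1}\odot K_{n_2}$ is a block graph: its maximal cliques are the central $K_{n_1}$ and the $n_1$ cliques $\{v\}\cup V(H'_v)\cong K_{n_2+1}$, any two pendant cliques being disjoint and each meeting the center in the single vertex $v$. Thus each vertex of the center lies in exactly two maximal cliques and every other vertex in exactly one, so each vertex is the intersection of at most two maximal cliques; by \cite[Theorem~1.1]{EHH} the ring $S/J_{H\odot H'}$ is Cohen-Macaulay and in particular $J_{G,H\odot H'}$ is unmixed. (Alternatively, one may argue the gluing of complete graphs directly, invoking \cite[Theorem~2.7]{RR} exactly as in the proof of Theorem~\ref{CM-corona}.)
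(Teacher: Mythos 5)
Your proof is correct, but it takes a genuinely different route from the paper's in two places, so let me compare. The paper never pins down $m=2$ up front: it takes an arbitrary $T'=T\cup(\bigcup_{v\in T}T_v)\in\mathcal{C}(H\odot H')$ and, keeping $m$ general, uses unmixedness of $J_{G,H'}$ and Proposition~\ref{unmixed3} to compute $\big(c_{H\odot H'}(T')-1\big)(m-1)=|T'|+|T|(m-2)-(m-1)$ when $T=[n_1]$ (its case (i)) and $=|T'|+|T|(m-2)$ when $T\subsetneq[n_1]$ (its case (ii)); case (i) then forces $n_1=\frac{m-1}{m-2}$, i.e.\ $m=3$ and $n_1=2$, contradicting $m\leq n_1$, which gives that $H'$ is complete, and case (ii) yields ``unmixed iff $m=2$,'' settling both implications with one computation. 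You instead observe that Lemma~\ref{unmixed1-corona}(b), combined with the standing hypothesis that $J_{G,H'}$ is unmixed, forces $m=2$ immediately — a shortcut the paper does not exploit — after which your witness (the same one underlying the paper's case (i): all of $[n_1]$ together with a copy of a cut set of $H'$ in every copy $H'_v$) can be analyzed in the simpler setting of Proposition~\ref{unmixed1}, where the contradiction is the cleaner $c_{H\odot H'}(T')=|T'|\neq |T'|+1$; your component count $c_{H\odot H'}(T')=n_1c_{H'}(T)$ and cardinality $|T'|=n_1(|T|+1)$ are right. For the converse you outsource to Cohen--Macaulayness: $K_{n_1}\odot K_{n_2}$ is a block graph each of whose vertices lies in at most two maximal cliques, so \cite{EHH} (or \cite{RR} via gluing, exactly as in Theorem~\ref{CM-corona}, (c)~$\Rightarrow$~(a), which is not circular since that implication does not rely on this lemma) gives Cohen--Macaulayness and hence unmixedness, whereas the paper verifies unmixedness directly from its case (ii) formula. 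What each approach buys: yours is shorter and maximally reuses earlier results, doing all arithmetic at $m=2$; the paper's stays self-contained within its cut-set calculus, and its two general-$m$ formulas serve both directions of the equivalence simultaneously.
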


\begin{proof}
If $J_{G,H\odot H'}$ is unmixed, then, by Lemma~\ref{unmixed1-corona}, $H$ is complete. Now, let $\emptyset \neq T'\in \mathcal{C}(H\odot H')$. Then, by Proposition~\ref{Prime}, we have $T'=T\cup(\bigcup_{v\in T}T_v)$, where $\emptyset \neq T\subseteq [n_1]$ and $T_v\in \mathcal{C}(H'_{v})$ such that $T, T_v$ satisfy the property $\mathcal{P}$. So, $|T'|=|T|+\sum_{v\in T}|T_v|$ and $c_{H\odot H'}(T')=c_{H}(T)+\sum_{v\in T}c_{H'}(T_v)$. Now, consider two following cases: \\

(i) Suppose that $T=[n_1]$. Then, for all $v\in T$, $N_H(v)\subseteq T$, and hence $T_v\neq \emptyset$, by Proposition~\ref{Prime}, part~(a). Thus, in this case $H'$ is not complete. Moreover, we have $c_{H}(T)=0$ and so $c_{H\odot H'}(T')=\sum_{v\in T}c_{H'}(T_v)$. Thus,
\begin{eqnarray}
\big{(}c_{H\odot H'}(T')-1\big{)}(m-1)&=& \big{(}\sum_{v\in T}c_{H'}(T_v)\big{)}(m-1)-(m-1)
\nonumber\\
\nonumber &= & {}
\sum_{v\in T}\big{(}c_{H'}(T_v)-1\big{)}(m-1)+|T|(m-1)-(m-1)
\nonumber\\
\nonumber &=& {} \sum_{v\in T}|T_v|+|T|(m-1)-(m-1)
\nonumber\\
\nonumber &= & {}\big{(}|T'|-|T|\big{)}+|T|(m-1)-(m-1)
\nonumber\\
\nonumber &= & {}|T'|+|T|(m-2)-(m-1),
\nonumber
\end{eqnarray}
where the third equality holds by Proposition~\ref{unmixed3}, because $J_{G,H'}$ is unmixed. So that in this case, we get $\big{(}c_{H\odot H'}(T')-1\big{)}(m-1)=|T'|+|T|(m-2)-(m-1)$. \\

(ii) Suppose that $T\subsetneq [n_1]$. Then, $c_{H}(T)=1$, and hence $c_{H\odot H'}(T')=1+\sum_{v\in T}c_{H'}(T_v)$. Thus, similar to the previous case, we have
\begin{eqnarray}
\big{(}c_{H\odot H'}(T')-1\big{)}(m-1)&=& \big{(}\sum_{v\in T}c_{H'}(T_v)\big{)}(m-1)
\nonumber\\
\nonumber &= & {}
\sum_{v\in T}\big{(}c_{H'}(T_v)-1\big{)}(m-1)+|T|(m-1)
\nonumber\\
\nonumber &=& {} \sum_{v\in T}|T_v|+|T|(m-1)
\nonumber\\
\nonumber &= & {}\big{(}|T'|-|T|\big{)}+|T|(m-1)
\nonumber\\
\nonumber &= & {}|T'|+|T|(m-2).
\nonumber
\end{eqnarray}
So, in this case, we have $\big{(}c_{H\odot H'}(T')-1\big{)}(m-1)=|T'|+|T|(m-2)$. \\

Now, if $J_{G,H\odot H'}$ is unmixed and $H'$ is not complete, then case~(i) might happen. Thus, by Proposition~\ref{unmixed3}, we have $|T'|+|T|(m-2)-(m-1)=|T'|$, which yields that $|T|=n_1=\frac{m-1}{m-2}$. Hence, $m=3$ and $n_1=2$, which contradicts the assumption $m\leq n_1$. Thus, we get that if $J_{G,H\odot H'}$ is unmixed, then $H'$ is complete. Now, assuming $H$ and $H'$ are both complete, we show that $J_{G,H\odot H'}$ is unmixed if and only if $m=2$. Since we supposed that $H'$ is complete, case~(i) does not occur. So, by case~(ii) and using Proposition~\ref{unmixed3}, we have $J_{G,H\odot H'}$ is unmixed if and only if $|T'|=|T'|+|T|(m-2)$ if and only if $m=2$, since $|T|\geq 1$. Thus, we get the desired result.
\end{proof}

\textbf{Acknowledgments:} The research of the first author was in part supported by a grant from IPM (No. 93050220). The second author was supported by the German Research Council DFG-GRK~1916.

\providecommand{\byame}{\leavevmode\hbox
to3em{\hrulefill}\thinspace}

\end{document}